\newtheorem{theorem}{Theorem}[section]
\newtheorem{proposition}[theorem]{Proposition}
\newtheorem{lemma}[theorem]{Lemma}
\newtheorem{corollary}[theorem]{Corollary}
\theoremstyle{definition}
\newtheorem{definition}[theorem]{Definition}
\newtheorem{remark}[theorem]{Remark}
\newtheorem{question}[theorem]{Question}
\newcommand{\PP}{\mathbb{P}}
\newcommand{\RR}{\mathbb{R}}
\newcommand{\CC}{\mathbb{C} }
\newcommand{\ZZ}{\mathbb{Z}}
\newcommand{\BB}{\mathbb{B}}
\newcommand{\HH}{\mathbb{H}}
\newcommand{\fg}{{\mathfrak{g}}}
\newcommand{\cO}{{\mathcal{O}}}
\newcommand{\cJ}{{\mathcal{J}}}
\newcommand{\cV}{{\mathcal{V}}}
\newcommand{\gth}{\theta}
\newcommand{\ga}{\alpha}
\newcommand{\gb}{\beta}
\newcommand{\gd}{\delta}
\newcommand{\gD}{\Delta}
\newcommand{\gep}{\epsilon}
\newcommand{\gS}{\Sigma}
\newcommand{\Sym}{\mathrm{Sym}}
\newcommand{\Tail}{\mathrm{Tail}}
\newcommand{\Term}{\mathrm{Term}}
\newcommand{\codim}{\mathrm{codim}}
\newcommand{\rmspan}{\mathrm{span}}
\newcommand{\ol}[1]{\overline{#1}}
\newcommand{\ssm}{\smallsetminus}
\begin{document}

\title[Reconstruction of curves from theta hyperplanes in genera $6, 7$]{Reconstruction of curves from their theta hyperplanes in genera $6$ and $7$}
\author{T\"urk\"u \"Ozl\"um \c{C}el{\.I}k}
\address{Max Planck Institute of Molecular Cell Biology and Genetics, Dresden}
\email{celik@mpi-cbg.de}
\author{David Lehavi}
\address{Tel Aviv}
\email{dlehavi@gmail.com}
\date{}
\noindent \begin{abstract}
We derive a formula for reconstructing a generic complex canonical curve $C$ of genus 6 and 7 in terms of the theta hyperplanes of $C$. Hence, we get a generic inverse to the Torelli map, as well as a complete description of the Schottky locus in these genera. The computational part of the proof relies on a certified numerical argument.\\
\end{abstract}
\subjclass[2020]{Primary: 14H40, 14H42, 65G40; Secondary: 14H45, 14Q05, 14-04}
\keywords{Curves, Theta hyperplanes, Reconstruction, Inverse Torelli, Schottky, Certification}
\maketitle
%
\section{Introduction}\label{sec:intro}
%
A generic complex algebraic curve $C$ of genus $g$ possesses $2^{g-1}(2^g-1)$
half canonical divisors $\gth_i$ with $\dim H^0(C,\gth_i)=1$ which are called the
{\em odd theta characteristics} of $C$.
These theta characteristics uniquely determine the curve, as shown
in~\cite{CapSer}. By identifying the curve with its canonical image in
$|K_C|^*\cong\PP^{g-1}$, one can consider the \emph{theta hyperplanes} as the
corresponding geometric objects on the canonical model, which meet the curve in
the projective space at $g-1$ points with multiplicity 2 at each point. This
classical topic in algebraic geometry can be studied from various perspectives.
One interesting approach is to reconstruct the canonical image of the curve
given all the theta hyperplanes. This problem has been extensively investigated
for non-hyperelliptic curves of genus 3, with pioneering work by
Aronhold~\cite{Aro} and Coble~\cite{Cob}. For recent expositions see
\cite[6.1.2]{Dol} and \Cref{rmr:n1} respectively. For curves of higher genera,
explicit constructions have been studied for genus $4$
in~\cite{LehGenus4, CelKulRenNamGenus4},
and for genus $5$ in~\cite{LehGenus5}. In this
article, we extend these results to curves of genera $6$ and $7$, providing new
insights into the general reconstruction problem.

In \Cref{sec:main} we outline the proof of our main result, namely
\Cref{thm:main}. It explicitly describes generic complex canonical curves of
genera 6 and 7 in terms of their theta hyperplanes. \Cref{cor:Schottky} provides
a solution to the Schottky problem in genera 6 and 7, which asks for a
characterization of the locus of Jacobian varieties among principally polarized
Abelian varieties. As in previous works~\cite{LehGenus4,LehGenus5}, we start
by proving the claim for a specific curve in each genus,
and conclude by using a classical degeneration argument.
In the current work these specific curves are Wiman's sextic and
the Fricke-Macbeath curve for genera $6$ and $7$ respectively.

The proof of our reconstruction formula for these two curves
relies on a combination of symbolic and
non-certified numerical algorithms. The latter produces non-certified
analytical representations of the hyperplanes. Thus, most of our work is to
certify the non-certified data.
In \Cref{sec:certificationThetaHyperplanes} we build a certification tool for
the theta hyperplanes, which may be of independent interest in both computing
and certifying intersection points acquiring multiplicities.
In \Cref{sec:certificationSteiner} we show how to certify the level 2 structure
on the theta hyperplanes,
and in \Cref{sec:dimcert} we show how to certify the reconstruction formula
itself. Finally, in \Cref{sec:program} we conclude the proof of
the main theorem by presenting the measured bounds on the input accuracies and
the computed accuracy bounds attained by the methods of the previous sections.
The paper contains two appendices in which we discuss classical results:
Riemann matrices in
\Cref{sec:computingThetaHyerplanes} and numerical linear algebra in
\Cref{sec:linear}.
%
\section{Notations, statement of the result, and outline of the proof}\label{sec:main}
%
We start by establishing our notation, which will be used throughout this paper
unless otherwise specified. Assume that $C$ is a generic complex curve of genus
$g$, and let $K_C$ denote the canonical divisor of $C$. As $C$ is generic, an
\emph{odd theta characteristic} $\theta$ of $C$ is a half-canonical divisor,
$2\theta=K_C$, with ${\dim}H^0(C,\theta)=1$. By considering the canonical model
of the curve in $|K_C|^*\cong\PP^{g-1}$, we can identify $\theta$ with a unique
hyperplane $H_\theta\subset |K_C|^*$ such that the intersection
$H_\theta\cdot C=2\theta$, where $C$ also represents its canonical model. We call
$H_\theta$ the \emph{theta hyperplane} arising from $\theta$. We denote the
homogeneous linear form in $g$ variables that defines $H_\theta$ by $l_\theta$.

An important object encapsulating relations between theta hyperplanes is the
\emph{Steiner set}. Denote the group of 2-torsion points of the Jacobian of $C$
by $JC[2]$. Let $0\neq \ga\in JC[2]$, then the Steiner set $\gS_\ga$
is composed of pairs ${\theta,\theta+\ga}$ with both being odd. Symbolically, we
have
\begin{equation}\label{def:SteinerSet}
\gS_\ga = \{\{\theta,\theta+\ga\}: {\dim}H^0(C,\theta)={\dim}H^0(C,\theta+\ga)=1\}.
\end{equation}
See \cite[5.4.2]{Dol} for further discussion.
For such a pair $\theta, \theta+\alpha$, we let the quadratic form
$q_{\{\gth,\gth+\ga\}}\in |\cO_{|K_C|^*}(2)|$ be the
image of $\{[H_\gth], [H_{\gth+\ga}]\}\in \Sym^2|K_C|$ under the map
$\Sym^2|K_C|\to |\cO_{|K_C|^*}(2)|$, namely the projectivization of the product of
the linear forms $l_\gth\cdot l_{\gth+\ga}$. We recall some notation that will
furnish our objects of study. We denote by $\gD$ the diagonal map:
$\gD:H^0(K_C+\ga)\to \Sym^2H^0(K_C+\ga)$.
Let $\nu_n$ be the Veronese map from $\Sym^2H^0(K_C+\ga)$ to
$\Sym^n\Sym^2H^0(K_C+\ga)$. We also consider the canonically
determined maps
$\pi_{K_C}:\Sym^2H^0(K_C)\to H^0(2K_C)$ and $\pi_{K_C+\ga}:\Sym^2H^0(K_C+\ga)\to H^0(2K_C)$. Last but not least, we will
use the $n$-th symmetric powers of the maps $\pi_{K_C},\pi_{K_C+\ga}$, which we
denote by $\pi^n_{K_C},\pi^n_{K_C+\ga}$.
\begin{definition}
  Let $U_{n,\ga}:=\pi^n_{K_C+\ga}\circ \nu_n\circ\gD(H^0(K_C+\ga))$, and let
  $V_{n,\ga}$ be linear span of $(\pi^n_{K_C})^{-1}U_{n,\ga}$.
\end{definition}
The maps and the objects defined above are related as in the following diagram,
whose essence is to relate multi-linear algebra on the Prym canonical system
$H^0(K_C+\ga)$ and on the canonical system.
\[
    \begin{tikzcd}[row sep=tiny, column sep=small]
      H^0(K_C+\ga) \ar[r, "\Delta"] & \Sym^2H^0(K_C+\ga) \ar[r, "\nu_n"]
      & \Sym^n\Sym^2H^0(K_C+\ga) \ar[rd, "\pi^n_{K_C+\ga}"]\\
      &&& \Sym^nH^0(2K_C) \supset U_{n,\ga}.\\
      && V_{n,\ga} \subset \Sym^n\Sym^2H^0(K_C) \ar[ur, "\pi^n_{K_C}"]
\end{tikzcd}
\]
Finally, let $I_n(\gD (X))$ be the ideal of degree $n$ forms vanishing on the
image $\gD (X)\subset \Sym^2X$ of $X$ under the diagonal map inside $\Sym^2X$.
This is the ideal corresponding to the Veronese image of this object, namely
$\nu_n(\gD(X))$. We are interested in the case when $X=H^0(K_C+\ga)$.
We now introduce our main object of study.
\begin{definition}\label{dfn:PropertiesABC}
  We say that the curve $C$ has {\em properties} {\bf A}, {\bf B}, {\bf C} with
  respect to $n$  if the following statements hold respectively.
\begin{itemize}
    \item[({\bf A})] The codimension of $V_{n,\ga}$ is
    \begin{equation}\label{codimV}
     d_\ga := \dim I_n(\gD(H^0(K_C+\ga)) - \dim \Sym^n\Sym^2H^0(K_C+\ga) + \dim \Sym^nH^0(2K_C).
    \end{equation}
  \item[({\bf B})] The linear space $V_{n,\ga}$ is spanned by the images of
    $q_{\{\gth,\gth+\ga\}}$.
    \item[({\bf C})] The intersection $\cap_{\ga\in JC[2]\ssm\{0\}}V_{n,\ga}$ is
      the kernel of $\pi^n_{K_C}$.
\end{itemize}
\end{definition}
Loosely speaking, properties {\bf B},{\bf C} determine whether the pairs of
theta hyperplanes are in \emph{general linear position}, up to known dimension
constraints. This makes these claims amenable to both specialization and
computational techniques. We make the first of these two observations precise
below.
\begin{lemma}\label{lem:codimV}
The codimension of $V_{n,\ga}$ is at least $d_\ga$.
\end{lemma}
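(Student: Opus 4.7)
The plan is to chase dimensions through the diagram defining $V_{n,\ga}$, exploiting the fact that codimensions are preserved under preimages of surjective linear maps. First I would reformulate $V_{n,\ga}$ as a preimage: since $0\in U_{n,\ga}$ (because $\gD, \nu_n, \pi^n_{K_C+\ga}$ all send $0$ to $0$), the kernel of $\pi^n_{K_C}$ is contained in $(\pi^n_{K_C})^{-1}U_{n,\ga}$, and a short argument using surjectivity of $\pi^n_{K_C}$ (see below) shows that the linear span commutes with the preimage, giving the identification
\[
V_{n,\ga} \;=\; (\pi^n_{K_C})^{-1}\bigl(\langle U_{n,\ga}\rangle\bigr),
\]
where $\langle \,\cdot\,\rangle$ denotes linear span.

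Next I would verify that $\pi_{K_C}\colon \Sym^2 H^0(K_C)\to H^0(2K_C)$ is surjective, which holds for generic curves of genus $\geq 3$ by Max Noether's theorem, and deduce that $\pi^n_{K_C}$ is surjective as well (the $n$-th symmetric power of a surjection between vector spaces is a surjection, since decomposable tensors span). This gives
\[
\codim V_{n,\ga} \;=\; \dim \Sym^n H^0(2K_C) - \dim \langle U_{n,\ga}\rangle.
\]
Now $\langle U_{n,\ga}\rangle = \pi^n_{K_C+\ga}\bigl(\langle \nu_n\circ \gD(H^0(K_C+\ga))\rangle\bigr)$ since images of linear spans under linear maps are linear spans of images, so
\[
\dim\langle U_{n,\ga}\rangle \;\leq\; \dim\bigl\langle \nu_n\circ\gD(H^0(K_C+\ga))\bigr\rangle.
\]
Finally, by the very definition of $I_n(\gD(H^0(K_C+\ga)))$ as the space of degree $n$ forms vanishing on the Veronese image $\nu_n\circ\gD(H^0(K_C+\ga))$, this last dimension equals $\dim\Sym^n\Sym^2 H^0(K_C+\ga) - \dim I_n(\gD(H^0(K_C+\ga)))$. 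Combining the three displays yields $\codim V_{n,\ga}\geq d_\ga$.

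I expect no serious obstacle; the only slightly delicate point is the identification of $V_{n,\ga}$ with a preimage, which requires the observation that $\ker \pi^n_{K_C}$ already lies inside $\langle (\pi^n_{K_C})^{-1}U_{n,\ga}\rangle$ (via $0\in U_{n,\ga}$). The inequality, as opposed to equality, comes from the possible failure of $\pi^n_{K_C+\ga}$ to be injective on the linear span of $\nu_n\circ \gD(H^0(K_C+\ga))$—equality corresponds to that injectivity, which is presumably what one hopes to verify in the computational part of the paper (this is property \textbf{A}).
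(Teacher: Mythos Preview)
Your proposal is correct and follows essentially the same approach as the paper: bound $\dim\langle U_{n,\ga}\rangle$ from above via the definition of $I_n$, then transfer codimensions along $(\pi^n_{K_C})^{-1}$. You are in fact more explicit than the paper on two points the authors leave implicit---the surjectivity of $\pi^n_{K_C}$ (which you justify via Max Noether) and the identification $V_{n,\ga}=(\pi^n_{K_C})^{-1}\langle U_{n,\ga}\rangle$ (which you derive from $0\in U_{n,\ga}$)---so your write-up would stand as a slightly more careful version of the same argument.
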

\begin{proof} We have the following equality by definition
\[
    \dim \rmspan\ \nu_n\circ\gD(H^0(K_C+\ga))=\dim \Sym^n\Sym^2H^0(K_C+\ga) - \dim I_n(\gD(H^0(K_C+\ga)).
\]
This implies that
\[
\dim \pi_{K_C+\ga}\rmspan\ \nu_n\circ\gD(H^0(K_C+\ga)) \leq
\dim \Sym^n\Sym^2H^0(K_C+\ga) - \dim I_n(\gD(H^0(K_C+\ga)).
\]
Then, by flipping the inequality, we have
\begin{align*}
  \codim \pi_{K_C+\ga}\rmspan\ & \nu_n\circ\gD(H^0(K_C+\ga)) \geq \\
& \dim \Sym^n H^0(2K_C)-\left(\dim \Sym^n\Sym^2H^0(K_C+\ga) - \dim I_n(\gD(H^0(K_C+\ga))\right).
\end{align*}
As the pull back preserves the codimension, the following codimensions are the same:
\[
   \codim{(\pi^n_{K_C}})^{-1}\pi_{K_C+\ga}\rmspan\ \nu_n\circ\gD(H^0(K_C+\ga)) =
\codim\pi_{K_C+\ga}\rmspan \nu_n\circ\gD(H^0(K_C+\ga)),
\]
which concludes the proof.
\end{proof}
Property {\bf B} is a general position property since it describes when a finite
number of points in a space spans this space. Property {\bf C} can also be seen
as a general position property: Given some spaces $V_{n,\ga}$ that are strictly
contained in $\Sym^n\Sym^2H^0(K_C)$ and are all containing some subspace $V$,
the property asserts that intersection $\cap_{\ga\in JC[2]\ssm\{0\}}V_{n,\ga}$ is
merely $V$.
\begin{remark}[$n = 1$]\label{rmr:n1}
  In this case, we have $\dim I_n(\gD(H^0(K_C+\ga))=0$, and so the
  codimension bound $d_\ga$ in Property {\bf A} becomes
$\dim H^0(2K_C)-\dim \Sym^2H^0(K_C+\ga)=3g-3 - g(g-1)/2$,
which is non-positive when $g>5$. This agrees with the map
$\pi_{K_C+\ga}$ being surjective, which has been known for genus $g>6$ since
\cite{LaSe}, and
the case of genus 6 is the first case of the Prym-Green conjecture. This case
was settled in \cite{ChiEisFarSch}.

The content of Properties {\bf A}, {\bf B}, {\bf C} with $n=1$ and generic genus $3$ curves
is essentially
classical: Indeed, Coble knew that Properties {\bf A}, {\bf B} hold for generic curves
\cite[Chapter IV Section 47 (6)]{Cob},
\cite[Chapter VI exercises F15-F23]{ACGH}. If one takes a smooth plane quartic
$C$ and a Steiner set corresponding to some non trivial $\ga\in JC[2]$,
which is formed by six pairs of bitangents to $C$ in this case, then the six
intersection points of the pairs of bitangents sit on a conic in the same plane
as $C$, and the double cover of the conic ramified along these six points is the
genus 2 Prym curve corresponding to $C,\ga$, which in turn means that the
conic is a reincarnation of the dual Prym canonical system.
Note that in this case Property {\bf C} is
trivial as the quartic does not sit on any conic.

More recent results show that Properties {\bf A}, {\bf B}, {\bf C} with $n=1$ hold for
generic complex curves of genus $4$ (see \cite{LehGenus4}) and $5$ (see \cite{LehGenus5}).
In \cite{CelKulRenNamGenus4} the main result of \cite{LehGenus4} is extended in two
directions: to space sextics on singular quadrics and over more general fields.
In \cite{hanselman2024equationsgenus4curves} the spaces $V_{C,\ga}$ are interpreted fro genus $4$ in
terms of theta constants instead of theta hyperplanes.
\end{remark}
\begin{proposition} For $n=2$ the following equalities hold:
\[
\begin{aligned}
&\dim I_n(\gD(H^0(K_C+\ga)) =\binom{g-1}{2} + (g-1) \binom{g-2}{2} + 2\binom{g-1}{4},\\
&\dim \Sym^2H^0(2K_C)= \binom{3g-2}{2},\quad \dim \Sym^2\Sym^2H^0(K_C+\ga)= \binom{\binom{g}{2} + 1}{2}.
\end{aligned}\]
\end{proposition}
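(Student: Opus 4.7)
The plan is to attack the three equalities separately. The second and third are quick Riemann--Roch calculations, while the first requires unpacking $I_2$ of a Veronese variety.

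\emph{The dimensions of the symmetric powers.} By Riemann--Roch, $\deg(2K_C)=4g-4\ge 2g-1$ gives $h^0(2K_C)=3g-3$, and the identity $\dim\Sym^2 V=\binom{\dim V+1}{2}$ yields $\binom{3g-2}{2}$. For nonzero $\ga\in\Jac(C)[2]$, the class $-\ga$ is a nonzero degree-$0$ divisor class, hence not effective; Serre duality then gives $h^1(K_C+\ga)=h^0(-\ga)=0$, so $h^0(K_C+\ga)=g-1$. Iterating $\dim\Sym^2$ twice yields $\binom{\binom{g}{2}+1}{2}$.

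\emph{The dimension of $I_2$.} Writing $X:=H^0(K_C+\ga)$ and $m:=\dim X=g-1$, the map $\gD:X\to \Sym^2 X$ is the affine cone over the second Veronese embedding $\PP X\hookrightarrow \PP(\Sym^2 X)$. Its degree-$2$ ideal is the kernel of the multiplication map $\mu:\Sym^2(\Sym^2 X^*)\to \Sym^4 X^*$, which is surjective (any degree-$4$ monomial factors as a product of two degree-$2$ monomials). Equivalently, the classical $\mathrm{GL}(X)$-plethysm $\Sym^2\Sym^2\cong \Sym^4\oplus S^{(2,2)}$ immediately identifies $I_2$ with the Schur module $S^{(2,2)} X^*$, whose dimension is $\tfrac{1}{12}m^2(m^2-1)$ by the hook-length formula.

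To match the explicit combinatorial form in the statement, I would exhibit a basis of $I_2$ directly. Fix a basis $e_1,\dots,e_m$ of $X$, write the generic element of $\Sym^2 X$ as a symmetric matrix $(y_{ij})$, and span $I_2$ by the $2\times 2$ minors of $(y_{ij})$, partitioned by the size of the intersection of row and column index sets. Intersection of size $2$ gives the $\binom{g-1}{2}$ minors $y_{ii}y_{jj}-y_{ij}^2$; size $1$ gives the $(g-1)\binom{g-2}{2}$ minors $y_{aa}y_{bc}-y_{ab}y_{ac}$; size $0$ produces $2\binom{g-1}{4}$ independent minors on each four-element index subset, the factor $2$ coming from the single Pl\"ucker-type linear relation among the three products $y_{ij}y_{kl}$, $y_{ik}y_{jl}$, $y_{il}y_{jk}$. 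Linear independence is automatic because the three classes involve disjoint sets of degree-$2$ monomials in the $y_{ij}$, and within each class the displayed minors have distinct leading monomials.

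\emph{Main obstacle.} The only nontrivial combinatorial input is the Pl\"ucker $3\to 2$ dependence on each four-element subset; everything else is bookkeeping. Matching the three-term count against $\tfrac{1}{12}(g-1)^2((g-1)^2-1)$ is then a mechanical polynomial identity in $g$, which simultaneously confirms that these minors span $I_2$ and delivers the stated formula.
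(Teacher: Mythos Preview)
Your argument is correct. The paper's own proof simply invokes \cite[Remark 5.1]{dolgachev_2003} for the first equality and declares the other two immediate, so your self-contained derivation via the plethysm $\Sym^2\Sym^2\cong\Sym^4\oplus S^{(2,2)}$ and the explicit $2\times 2$ minor basis is effectively an unpacking of what that citation contains. The added value of your route is that the partition of the minors by the size of the row/column index intersection makes the three-term expression $\binom{g-1}{2}+(g-1)\binom{g-2}{2}+2\binom{g-1}{4}$ visibly count something, rather than arising only after matching a closed form against $\tfrac{1}{12}(g-1)^2((g-1)^2-1)$; the paper, by contrast, trades this transparency for brevity.
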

\begin{proof}
The first equality follows from \cite[Remark 5.1]{dolgachev_2003}, and the other
two are immediate.
\end{proof}
\begin{remark}\label{rem:codimension}
  Property {\bf C} fails for $n=2$ and $g < 6$ as the tacit surjectivity
  assumption of the map $\pi_{K_C+\ga}$ fails in these cases.
  Computing the dimensions above for genus $6,7,8,9$, the codimensions $d_\ga$
  from \Cref{codimV} become 50, 45, 21, -30 respectively.
  Thus, our codimension computation is vacuous already for genus 9. We also
  compute the increasing sequence of the dimensions of the spaces
  $\Sym^2\Sym^2H^0(K_C)$ for genus $6,7,8$:
\[\binom{\binom{6 + 1}{2} + 1}{2}=231,\quad \binom{\binom{7 + 1}{2} + 1}{2} =406,\quad  \binom{36 + 1}{2}= 666.\]
On the other hand, the numbers of pairs of theta hyperplanes arising from a Steiner set in genera $6,7,8$ are
$496, 2016, 8128$ respectively. Hence, in principle, Properties {\bf A}, {\bf B}, {\bf C} may all hold for genera $6,7,8$.

We now present the main result of the current study:
\end{remark}
\begin{theorem}\label{thm:main}
Properties $\text{{\bf A}}$, $\text{{\bf B}}$, $\text{{\bf C}}$ hold for $n=2, g=6,7$.
\end{theorem}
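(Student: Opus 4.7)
The plan is to reduce \Cref{thm:main} to the verification of Properties \textbf{A}, \textbf{B}, \textbf{C} on two explicit curves---Wiman's sextic in genus $6$ and the Fricke-Macbeath curve in genus $7$---and then invoke openness on the moduli space $\mathcal{M}_g$. Property \textbf{A} is upper semi-continuous in families because \Cref{lem:codimV} supplies the matching lower bound $\codim V_{n,\ga}\ge d_\ga$, so equality on a single fiber is Zariski-open. Properties \textbf{B} and \textbf{C} are likewise open, expressing generic behavior of ranks of finitely many linear maps and of intersections of finitely many subspaces spanned by algebraically varying data. Because $\mathcal{M}_6$ and $\mathcal{M}_7$ are irreducible, exhibiting a single curve at which \textbf{A}, \textbf{B}, \textbf{C} all hold produces a dense open locus where they persist---this is the ``classical degeneration argument'' alluded to in \Cref{sec:intro}.

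For each distinguished curve the verification proceeds in three numerical stages, corresponding one-to-one with the sections of the paper. First, following \Cref{sec:computingThetaHyerplanes}, compute a Riemann matrix, enumerate the $2^{g-1}(2^g-1)$ odd theta characteristics, and numerically extract the linear forms $l_\gth$ defining the theta hyperplanes; certify these using the multiplicity-two intersection framework of \Cref{sec:certificationThetaHyperplanes}. Second, determine the Steiner partition---that is, pair up the hyperplanes into $\{l_\gth, l_{\gth+\ga}\}$ for each nonzero $\ga\in\Jac(C)[2]$---and certify this combinatorial level-$2$ structure via the methods of \Cref{sec:certificationSteiner}. Third, form the quadratic forms $q_{\{\gth,\gth+\ga\}}=l_\gth\cdot l_{\gth+\ga}$ and apply the certified linear-algebra tools of \Cref{sec:linear} together with the dimension-jump certifications of \Cref{sec:dimcert} to check (\textbf{A}) that $\codim V_{n,\ga}$ equals $d_\ga$, (\textbf{B}) that the images of $q_{\{\gth,\gth+\ga\}}$ already fill $V_{n,\ga}$ (which, combined with \textbf{A}, is a purely numerical rank equality), and (\textbf{C}) that $\cap_{\ga\ne 0}V_{n,\ga}=\ker\pi^n_{K_C}$.

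The main obstacle is the certification of the hyperplanes themselves: each theta hyperplane meets $C$ tangentially at $g-1$ points, and a naive interval argument cannot certify these multiplicity-two intersections, nor rule out linear forms that are merely tangent to $C$ at a proper subset of their $2g-2$ intersection points. Handling this rigorously is the role of the dedicated framework of \Cref{sec:certificationThetaHyperplanes}. A secondary obstacle is scale: with $496$ Steiner pairs in genus $6$ and $2016$ in genus $7$, living inside ambient spaces $\Sym^2\Sym^2H^0(K_C)$ of dimensions $231$ and $406$ respectively (cf.\ \Cref{rem:codimension}), certifying that a computed rank is sharp demands tight propagation of error through large SVD-type computations. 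The choice of Wiman's sextic and the Fricke-Macbeath curve is guided by these considerations: their large automorphism groups impose strong symmetries on the theta hyperplanes, shrinking the independent data that must be tracked and supplying internal consistency checks on every step of the certification.
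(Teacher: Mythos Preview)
Your outline is correct and matches the paper's own proof strategy essentially point for point: reduce via the semi-continuity lemma (\Cref{L:C7L2}) to Wiman's sextic and the Fricke-Macbeath curve, then run the three certification stages of \Cref{sec:certificationThetaHyperplanes}, \Cref{sec:certificationSteiner}, and \Cref{sec:dimcert}, with the final numerical bounds assembled in \Cref{sec:program}. One small sharpening: the paper does not argue semi-continuity of $\codim V_{2,\ga}$ directly but rather applies \Cref{L:C7L2} to the span of the $q_{\{\gth,\gth+\ga\}}$'s, so that Properties \textbf{A} and \textbf{B} are verified simultaneously once that span attains dimension $\dim\Sym^2\Sym^2H^0(K_C)-d_\ga$ on the test curve.
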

We now describe our approach to prove this theorem using ideas from \cite{LehGenus4,LehGenus5}, \cite{CelKulRenNamGenus4}. Indeed, we will make use of the following lemma. Later, in \Cref{r_cost}, we discuss key differences between the current work and these previous ones beyond the obvious technical difficulty of moving from $n=1$ to $n=2$ in property {\bf C}.
\begin{lemma}\label{L:C7L2}
Let $\cV/X$ be a vector bundle over a base $X$, and let $\cV_1,\dots \cV_n$ be sub-bundles of $\cV$.
Then the function $\dim\rmspan(\cV_1|_x,\ldots, \cV_n|_x)$ is lower semi-continuous on $X$,
and the function $\dim (\cap_{i=1}^n\cV_i|_x)$ is upper semi-continuous on $X$.
\end{lemma}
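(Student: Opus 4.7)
The plan is to reduce both statements to the classical fact that for a morphism $\phi\colon \cV\to \cV'$ of vector bundles over $X$, the rank function $x\mapsto \mathrm{rank}(\phi_x)$ is lower semi-continuous (equivalently $x\mapsto \dim\ker(\phi_x)$ is upper semi-continuous). Indeed, $\mathrm{rank}(\phi_x)\ge r$ is cut out in local trivializations by the non-vanishing of some $r\times r$ minor of a matrix of regular functions, so it is an open condition. The work is then to realize the fiberwise span and the fiberwise intersection as the image and kernel of explicit bundle morphisms.

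For the span, consider the addition morphism
\[
\sigma\colon \bigoplus_{i=1}^n \cV_i \longrightarrow \cV,\qquad (v_1,\ldots,v_n)\mapsto v_1+\cdots+v_n.
\]
By construction $\mathrm{image}(\sigma_x) = \langle \cV_1|_x,\ldots,\cV_n|_x\rangle$ for every $x\in X$, so
\[
\dim \langle \cV_1|_x,\ldots,\cV_n|_x\rangle = \mathrm{rank}(\sigma_x),
\]
which is lower semi-continuous by the paragraph above.

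For the intersection, use the hypothesis that each $\cV_i$ is a sub-bundle (so the quotients $\cV/\cV_i$ are again vector bundles on $X$) and form the morphism
\[
\tau\colon \cV_1 \longrightarrow \bigoplus_{i=2}^n \cV/\cV_i
\]
whose $i$-th component is the composition of $\cV_1\hookrightarrow \cV$ with the projection $\cV\twoheadrightarrow \cV/\cV_i$. Fiberwise, $v\in \cV_1|_x$ lies in $\ker(\tau_x)$ iff $v\in \cV_i|_x$ for every $i\ge 2$, which forces $v\in \bigcap_{i=1}^n \cV_i|_x$; conversely every vector in this intersection lies in $\ker(\tau_x)$. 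Hence
\[
\dim \bigcap_{i=1}^n \cV_i|_x = \dim \ker(\tau_x),
\]
and the right-hand side is upper semi-continuous.

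There is no real obstacle here: the only subtlety is ensuring that the quotients $\cV/\cV_i$ remain vector bundles, which is exactly the content of ``sub-bundle'' in the hypothesis. Once that is granted, both assertions follow by the same classical rank-semi-continuity input applied to two very natural bundle morphisms.
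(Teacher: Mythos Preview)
Your proof is correct. The paper itself does not give an argument: it simply records the lemma as ``Classical'' and points to \cite[Corollary 7]{LehGenus4}. Your reduction of both assertions to rank semi-continuity of explicit bundle maps (the sum map $\bigoplus_i\cV_i\to\cV$ for the span, and the composite $\cV_1\to\bigoplus_{i\ge 2}\cV/\cV_i$ for the intersection) is the standard way to justify such a citation, so in effect you have supplied what the paper omits.
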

\begin{proof}
Classical -- see e.g. \cite[Corollary 7]{LehGenus4}.
\end{proof}
\subsection*{The ingredients of the proof of \Cref{thm:main}}
We start by arguing that \Cref{thm:main} follows from the statement of the theorem for a \emph{specific} curve thanks to \Cref{L:C7L2}. Indeed, each $q_{\{\gth,\gth+\ga\}}$ generates a rank 1 sub-bundle of $V_{2,\ga}$ over the family of genus $g$ curves $C$. Whence, it suffices to show that the dimension of the span of the image of $q_{\{\gth,\gth+\ga\}}$ is the same as the dimension of $V_{2,\ga}$ for a particular curve $C$, and similarly, it is enough to show that given some subset
$\{\ga_i\}\subset JC[2] \ssm\{0\}$ the intersection $\cap_iV_{2,\ga_i}C$ has codimension at least $\dim \Sym^2\Sym^2H^0(2K_C)$ for a particular curve.
To this end, we proceed as follows:
\begin{enumerate}
\item\label{i:outsideinterest} In \Cref{sec:certificationThetaHyperplanes}, we build a certification tool for a single theta hyperplane which relies on having a non-certified approximation of a theta hyperplane approximation and its
  tangency points.
    \item In \Cref{sec:certificationSteiner}, given a non-certified level 2 structure and the certified intersection points of the certified hyperplanes, we build a certification tool for subsets of some of the Steiner sets.
    \item In \Cref{sec:dimcert}, we build certification tools for the
      inequalities
      of $\dim V_{2,\ga}\geq d_\ga$ and $\codim\cap V_{2,\ga} \geq\codim\ker(\pi^2_{K_C})$, needed to
      prove Properties {\bf B}, {\bf C} respectively (we already proved the
      other direction of the first, and the other direction of the second
      follows assuming equality on the first).
 \item In \Cref{sec:program}, we introduce Wiman's sextic and the
   Fricke-Macbeath curve - our ``specific'' curves for $g=6$ and $7$
   respectively. Next we explain how to compute the non-certified theta
   hyperplanes and level 2 structure for them, and discuss the errors
   in these algorithms. We then continue to discuss errors incurred by
   performing numerical linear algebra computations and our own certification
   tools from the previous sections; concluding with the proof of
   \Cref{thm:main}. Finally, we also discuss in this section the testing of
   the certification code.
\end{enumerate}
As already mentioned in \Cref{sec:intro}, the certification tool in \Cref{i:outsideinterest} may be of independent interest in both computing and certifying intersection points acquiring multiplicities -- see \Cref{lma:F}, \Cref{rmr:grad_descent} \Cref{i:grad_descent} and \Cref{subsec:testing} \Cref{i:grad_decent_bound}.
\begin{corollary}\label{cor:saturation}
  For a generic curve of genus $6$ or $7$, the ideal $I_2(C)$ is the saturation of the ideal
  generated by $\ker(\pi^2_{K_C})$ inside the graded algebra $\oplus_{i=0}^\infty\Sym^{2i}H^0(K_C)$ at the irelevant
  ideal, and the canonical image of $C$ is the intersection of the nulls of the forms of $I_2(C)$.
\end{corollary}
\begin{proof}
By \Cref{thm:main}, we can recover the kernel of the map $\pi^2_{K_C}$.
Note that the short exact sequence
\[0\to I_2(C)\to\Sym^2H^0(K_C)\to H^0(2K_C)\to 0\]
gives after applying $\Sym^2$ the right exact sequence
\[I_2\otimes\Sym^2H^0(K_C)\to\Sym^2\Sym^2H^0(K_C)\overset{\pi_{K_C}^2}{\longrightarrow}\Sym^2H^0(2K_C)\to 0.\]
Thus the image of $\ker(\pi_{K_C}^2)$ under the multiplication map
\[\Sym^2\Sym^2H^0(K_C)\to\Sym^4H^0(K_C)\]
is the image of $I_2(C)\otimes\Sym^2H^0(K_C)$.
Our problem is to recover $I_2(C)$ from this data. To this end we consider the ideal
generated from this image in the graded ring $\oplus_{i=0}^\infty\Sym^{2i}H^0(K_C)$,
saturate it at the irrelevant ideal $\oplus_{i=1}^\infty\Sym^{2i}H^0(K_C)$, and take the degree two
part.

Finally recall that by the Enriques–Babbage theorem a non-hyperelliptic canonical curve $C$ is cut out by quadrics unless it is trigonal or a plane quintic. 
\end{proof}
Given a principally polarized Abelian variety, one can define the theta hyperplanes via
the Gauss
map. In explicit analytic terms, given a period matrix in the Siegel upper half space, which
parameterizes the Abelian variety, the theta hyperplanes are expressed in terms of the Riemann theta
function. The {\em Schottky problem} asks for a description of the (closure of the) locus of
Jacobians inside the (analytic) moduli of Abelian varieties (see \cite{Debarre85,Donagi88,Gru12} for
overview papers, and \cite{Shiota86,Shiota89,Krichever10} for proofs of Novikov and Welters
conjectures, which describe the Schottky
locus in terms of solutions to certain differential equations in theta functions).
Using the notation introduced at the beginning of Section~\ref{sec:main}, we
now can now transform the problem to a {\em completely algebraic, and so (almost - with the
  exception of 2) characteristic
free problem}
as well as describe the Schottky locus -- the locus of Jacobians in the moduli space of Abelian
varieties -- for genus $6$ or $7$.

We start by introducing two constructions
for principally polarized Abelian varieties (or ppavs),
analogous to those given earlier in the section for curves: Let $(A,\Theta)$ be a ppav; for each
\(\alpha \in A[2] \setminus \{0\}\), define
$\Sigma_\alpha := \left\{ \{\theta, \theta + \alpha\} \mid  \theta, \theta + \alpha \in A[2] \text{ are odd 2-torsion points} \right\}$.
Given a pair $\{\theta, \theta + \alpha\} \in \Sigma_\alpha$, define
$q_{\{\theta, \theta + \alpha\}} \in \left| \cO_{\PP T_0 A}(2) \right|$ to be the image of
$\{ [T_\theta \Theta], [T_{\theta + \alpha} \Theta] \} \in \Sym^2 \PP(T_0 A)^*$ under the natural map
$\Sym^2 \PP(T_0 A)^* \to |\cO_{\PP T_0 A}(2)|$, where  all
tangent spaces $T_p A$ are identified with $T_0 A$ via translation. Note that in the case where
$A = JC$ is the Jacobian of a curve, these definitions coincide with the earlier ones for $C$. We further define a subspace $I_A \subset \Sym^2(T_0 A)^*$ as the saturation of $\bigcap_{\alpha \in A[2] \setminus \{0\}} \mathrm{span} \left\{ \nu_2(q_{\theta, \theta + \alpha}) \right\}_{\{\theta, \theta + \alpha\} \in \Sigma_\alpha}\subset \Sym^2 \Sym^2(T_0 A)^*$ at the irrelevant ideal of the graded ring $\bigoplus_{i=0}^\infty \Sym^{2i}(T_0 A)^*$. 
\begin{corollary}\label{cor:Schottky}
Let $g$ be $6$ or $7$. The Schottky locus in genus $g$ is the closure of the moduli points
$[(A,\Theta)]$ such that the null set of $I_A$ is a smooth genus $g$ curve $C$, and
that $(JC, \Theta_C)\cong(A,\Theta)$.
\end{corollary}
\begin{proof}
Consider a ppav $(A,\Theta)$. If the null set of $I_A$ were to define a canonical curve, one might ask whether $(A,\Theta)$ arises as the Jacobian of that curve. However, outside the Schottky locus, there is no curve whose Jacobian is $(A,\Theta)$. Now suppose $(A, \Theta)$ lies in the Schottky locus, then there exists a curve $C$ such that $(A,\Theta)\cong (JC, \Theta_C)$. By \Cref{thm:main} and
\Cref{cor:saturation}, if $(A,\Theta)$ is generic inside this locus, then 
$I_A=I_2(C)$ and the null of $I_A$ is $C$.
\end{proof}
Note that constructing $(JC, \Theta_C)$ as well as verifying that a projective map defined by a
correspondence between matched theta hyperplanes extends to
isomorphism of ppavs is completely effective.
 
To the best of our knowledge, this is the only known effective and the only algebraic
characterization of generic points on the Schottky locus in genus \(g > 5\). For a
detailed overview of the effectiveness of various approaches to the Schottky problem, we refer the
reader to \cite{KS2013}.
\begin{remark}\label{r_cost}
The construction of curves in genus 4 \cite{LehGenus4} was completely algebraic, moreover, unlike \cite{LehGenus5,CelKulRenNamGenus4}, the proofs were not computer-aided. In~\cite{LehGenus5}, although the proof was computer-aided, a witness was provided that furnishes the proof of the main result, which could be verified by rank computations on non-mechanized scale. Moreover, as the theta hyperplanes were found algebraically, the proof and also the witness verification could have been done over any field, with the exception of some bad characteristics.

In the current study, we deviate from this pleasant scheme in two manners: First, we find the theta hyperplanes by approximate analytic methods via theta functions. In particular, we do not know over which number field they are defined. Second, the techniques we use to certify our numerical computations involve large-scale linear algebra computations. Nevertheless, we stress that \Cref{thm:main} could be stated over any field of characteristic other than 2.
\end{remark}
We close this section by outlining some directions to pursue in future.
\begin{question}
As pointed in \Cref{rem:codimension}, the dimension counts break down for genus strictly greater than 8. It would be pointless to consider $I_n(\gD (V))$ for $n>2$, as this ideal is generated in degree $2$ for any $n$. One might attempt instead to use syzygies for the ideal, namely relations among the $2$-forms.
\end{question}
\begin{question}
The requirement for the moduli points $[(JC, \Theta_C)],[(A,\Theta)]$ to be identical at the end of
\Cref{cor:Schottky} is aesthetically unappealing. Is it needed ?
\end{question}
\begin{question}
Is there a more conceptual argument than our brute-force computational approach ? e.g. considering
properties of symmetric powers of the Hodge and Prym vector bundles on the moduli space
$\mathcal{R}_g$ ? considering the behviour of our construction on nodal curves ?
\end{question}
%
\section{Numerically certifying a single theta hyperplane}\label{sec:certificationThetaHyperplanes}
%
In this section we assume that we are given a single putative approximation
$H'$ of a theta hyperplane to $C\subset|K_C|^*$,
together with approximations $P_i'$, of the points comprising $H'\cap C$
(which we denote by $P_i$). Our objective is to certify $H'$ and $P_i'$; thus
we want to bound the distance between $H'$ and the closest theta hyperplane
$H$, as well as the distances between the $P_i$'s and the tangency points of
$H$ and $C$. Of these two problems, the fact that we do not have exact
hyperplanes is the more challenging one to deal with.

To certify the theta hyperplanes we locally approximate the curve
about each point $P_i$ with a conic (under the assumption that the tangency points are not flexes, which we verify). We
refer to this construction as \emph{conic approximation}. The analysis of this
construction enables us to establish bounds on the accuracies of the hyperplanes;
thus ensuring their numerical stability. This plan presents three problems; the
first two of which originate from numerical inaccuracies in the input
hyperplanes $H'$:
First, we need to find a conic approximation and prove that it is adequate.
Next, we
need to prove that having a hyperplane ``close to being \emph{multitangent}" to
the conic approximation means that the ``candidate" of a theta hyperplane is
indeed close to the theta hyperplane. Our last problem is
adressing numerical issues arising from the differences between $P_i$ and $P'_i$.

The section is organized as follows: We construct the local conic
approximation of
the curve in \Cref{lma:PTR}, and bound its accuracy in \Cref{r_x}. We use
these bounds to prove \Cref{lma:F}: a certification tool bounding the
distance between $H,H'$, under the assumption that
we are given the points $P_i$ (and not $P_i'$). Finally, in \Cref{thm:pp} and
\Cref{thm:hh} we drop this assumption and bound the distances between $H,H'$ and $P_i,P_i'$.
\subsection*{Construction of the conic approximation to $C$ at a point $P$}
Let $Q_1,\ldots Q_{\dim I_2(C)}$ be a projective basis for $I_2(C)$. We will abuse
notations and denote the $g\times g$
symmetric matrices corresponding to the quadrics by $Q_i$ as well. Then $C$ is given by the projectivization of the null set of $P\mapsto \{P^t Q_iP\}_i$. In what follows, we work with projective coordinates and represent points as column vectors of size $g$.

\begin{lemma}[The conic approximation of an intersection of quadrics]\label{lma:PTR}
Let $P$ be a non-flex point on $C$, and let
$T, R$  be so that $P+xT + x^2R$ is a conic approximation of $C$ at $P$ then $T,R$ satisfy
\begin{equation}\label{eq:conicApproximationQuadrics}
0=2P^tQ_iT,\qquad0=2T^tQ_iT+P^tQ_iR,\qquad\text{ such that }T,R\neq 0.
\end{equation}
\end{lemma}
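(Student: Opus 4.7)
The approach is to impose that the quadrics $Q_i$ defining $C$ vanish on $\gamma(x) := P + xT + x^2R$ to sufficient order at $x = 0$, and to read off the relations that $(T, R)$ must satisfy. Using the symmetry $Q_i^t = Q_i$, I would expand
\begin{equation*}
\gamma(x)^t Q_i \gamma(x) = P^t Q_i P + 2x\, P^t Q_i T + x^2\bigl(T^t Q_i T + 2 P^t Q_i R\bigr) + 2x^3 T^t Q_i R + x^4 R^t Q_i R.
\end{equation*}
The constant term vanishes since $P \in C$. The defining property of a second-order conic approximation of the branch of $C$ through $P$ is that $\gamma(x)^t Q_i \gamma(x) = O(x^3)$ for every $i$; this forces the coefficients of $x$ and $x^2$ to vanish for every $i$, yielding the pair of relations in \eqref{eq:conicApproximationQuadrics} (up to the scalar convention used in parametrizing the conic).

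It remains to establish $T, R \neq 0$. The vanishing $T = 0$ would force $\gamma$ to have zero derivative at $0$, so it could not parametrize a branch through the smooth point $P$; hence $T \neq 0$. For $R$, I would argue by contradiction: if $R = 0$, the two relations of \eqref{eq:conicApproximationQuadrics} reduce to $P^t Q_i T = 0$ and $T^t Q_i T = 0$ for every $i$, so $(P+xT)^t Q_i (P+xT) \equiv 0$, meaning the entire tangent line $P + \mathbb{C}T$ would lie on $C$. This is impossible for a canonically embedded smooth irreducible curve of genus $g \geq 3$, which has degree $2g - 2 > 1$ and hence contains no line. The non-flex hypothesis is the natural way to phrase this obstruction, as it excludes precisely the degeneration of the osculating conic onto its tangent line.

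The proof is essentially the polynomial identity above together with this short non-degeneracy argument, so I do not foresee any real obstacle. The main care is in fixing conventions—what exactly a ``conic approximation'' means (the order-of-contact condition above, with $\gamma$ required to be an honest conic rather than a line) and how overall scalar factors propagate into the precise form of \eqref{eq:conicApproximationQuadrics} stated in the lemma.
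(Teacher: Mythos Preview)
Your approach is essentially the same as the paper's: both expand $(P+xT+x^2R)^tQ_i(P+xT+x^2R)$ and read off the vanishing of the first and second order terms (the paper phrases these as first and second derivatives at $x=0$), then invoke the non-flex hypothesis for $R\neq 0$. Your caution about scalar conventions is well placed---your $x^2$ coefficient $T^tQ_iT+2P^tQ_iR$ is the correct one and is what the paper actually uses in \Cref{dfn:M_TandM_R}, so the form displayed in \eqref{eq:conicApproximationQuadrics} appears to carry a typo in its coefficients.
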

\begin{proof}
The point $P$ lies on each quadric $Q_i$. Hence, $T$ and $R$ are solutions of the following equations for any $i$:
$0=\frac{d}{dx} (P+xT)^tQ_i(P+xT)|_{x=0}$, and
$0=\frac{ d^2}{d x^2} (P+xT+x^2R)^tQ_i(P+xT+x^2R)|_{x=0}$.
These equations imply the equations in \eqref{eq:conicApproximationQuadrics}.
Finally, $R\neq 0$ since $P$ is not a flex point.
\end{proof}
\begin{definition}\label{dfn:M_TandM_R}
We represent points $\PP^{g-1}$ -- specifically $P$ -- as norm $1$ vectors in $\CC^g$.
We take $T$ to be orthogonal to $P$ with $|T|=1$, and consider $R$ to be the smallest norm
solution of the linear system arising from \eqref{eq:conicApproximationQuadrics}. These last requirements may be formulated as follows: First, denote by $M$ the $\dim I_2(C)\times g$ matrix whose rows are ${\{P^tQ_i\}}_i$. We note that the co-rank of $M$ is 2. Indeed, if there were other vectors in the kernel than $P,T$ then $P$ would not be a smooth point of $C$. To solve $T$, we look at the kernel of the $(\dim I_2(C) + 1)\times g$ matrix $M_T:=\left(\begin{smallmatrix}M\\P^\dagger \end{smallmatrix}\right)$, where $\dagger$ denotes the complex
  conjugate transpose operator. To solve $R$,  denote $M_{R}:=\left(\begin{smallmatrix}M_T\\T^\dagger \end{smallmatrix}\right)$, and solve the equation
    \begin{equation}\label{e:rqr}
    \quad M_R R=-\frac{1}{2}\left(\begin{smallmatrix}
        T^tQ_1T\\ \vdots\\ T^tQ_{\dim I_2(C)}T \\ 0 \\ 0
    \end{smallmatrix}\right).
    \end{equation}
\end{definition}

We now introduce a couple of approximation results for points close to our curve $C$. The need for these results is twofold. The first is that we are about to express the curve locally as a ``conic plus something small". The second is that we will have to bound the effect of computational errors to certify our computations, as in practice, these are our input points. To this end, we will assume that $\sqrt{\sum_i|{\tilde{P}}^tQ_i\tilde{P}|^2} < \gep$ for some ${\tilde{P}}\in \CC^g$. For our purposes, this ${\tilde{P}}$ will either be an approximation of some point $P$ by means of conic approximation, or a computed version of a point $P\in C$. We let $\tilde{M}$ be the matrix whose rows are ${\tilde{P}}^tQ_i$. As usual, we denote the $j$th singular value of a matrix by $\sigma_j$.

Recall that the curve $C$ is a local complete intersection. This statement can be made effective as follows: The row vectors $P^tQ_i$s span the gradients of the quadrics in $I_2(C)$ at a point $P$. Hence, if we consider the matrix built of these row vectors then its top $g-2$ singular values are the only non-trivial ones. Also, the corresponding singular vectors of this matrix are the gradients of quadrics in $I_2(C)$ which span the normal space of the curve at $P$. We use this observation to set up the proof of \Cref{lma:close_to_P} as follows:
\begin{lemma}\label{lma:close_to_P}
Let $\tilde{Q}_1,\ldots, \tilde{Q}_{g-2}$ be the quadrics corresponding to
the top $g-2$ singular vectors of $\tilde{M}$, then there is a point $P\in C$ such that $|\tilde{P}-P| <r$ where $r\in \RR$ is such that $r<(|({\tilde{P}}^t\tilde{Q}_i\tilde{P})_i|- gr^2)/2\sigma_{g-2}(\tilde{M})$.
\end{lemma}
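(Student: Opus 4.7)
The plan is to run a Newton--Kantorovich fixed-point argument on the local complete intersection cut out by the top $g-2$ quadrics. Concretely, define $F\colon\CC^g\to\CC^{g-2}$ by $F(x) = (x^t\tilde Q_i x)_{i=1}^{g-2}$. Since $F$ is purely quadratic, the Taylor expansion
\[
    F(\tilde P + h) \;=\; F(\tilde P) \;+\; DF(\tilde P)\,h \;+\; H(h), \qquad H(h):=(h^t\tilde Q_i h)_i,
\]
is exact. The Jacobian $DF(\tilde P)$ has rows $2\tilde P^t\tilde Q_i$, so by the defining choice of the $\tilde Q_i$ as the top $g-2$ right singular vectors of $\tilde M$, its smallest singular value is exactly $2\sigma_{g-2}(\tilde M)$; meanwhile the quadratic remainder is bounded by $|H(h)|\le g|h|^2$ under the normalization of the quadrics used in the paper.

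The first substantive step is to introduce the Newton operator $\Psi(h):=-DF(\tilde P)^+\bigl(F(\tilde P)+H(h)\bigr)$, where $(\cdot)^+$ is the Moore--Penrose pseudoinverse. Any fixed point $h^*$ of $\Psi$ automatically satisfies $F(\tilde P + h^*)=0$, because $DF(\tilde P)$ is surjective and the pseudoinverse reproduces vectors in its image. The norm estimate
\[
    |\Psi(h)| \;\le\; \frac{|F(\tilde P)| + g|h|^2}{2\sigma_{g-2}(\tilde M)},
\]
together with its two-point analogue, will show that $\Psi$ is a contracting self-map of the ball $\{|h|\le r\}$ precisely under the quantitative hypothesis on $r$ as stated. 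Banach's theorem then produces a unique $h^*$ of norm at most $r$, and I set $P:=\tilde P+h^*$.

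The main obstacle I anticipate is the final step: arguing that $P$ lies on $C$ itself, not merely on the vanishing of $\tilde Q_1,\dots,\tilde Q_{g-2}$. This requires a quantitative SVD-perturbation analysis showing that the quadrics discarded by the top-singular-vector selection are, throughout the ball of radius $r$, linear combinations of the retained ones up to an error controlled by the bottom singular values of $\tilde M$; by the defining property of the SVD this error is comparable to $|F(\tilde P)|$ and vanishes at $P$, so the full ideal $I_2(C)$ vanishes at $P$ and $P \in C$. The remaining self-map and contraction estimates are routine quadratic-remainder bookkeeping once the normalization of the $\tilde Q_i$ is fixed.
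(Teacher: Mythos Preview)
Your Newton--Kantorovich fixed-point argument is essentially the same device the paper uses: the paper phrases it as an open-mapping argument (the image of the ball of radius $r$ under $z\mapsto F(\tilde P)+2\tilde M z+(z^t\tilde Q_i z)_i$ contains a ball of radius $2\sigma_{g-2}(\tilde M)\,r-gr^2$ around $F(\tilde P)$, so contains $0$ once $|F(\tilde P)|$ falls below that), and the key estimates are identical, including the bound $\sqrt{\sum_i|\tilde Q_i|^2}<g$ on the quadratic tail. Do note, however, that the self-map inequality you actually derive, $|F(\tilde P)|+gr^2\le 2\sigma_{g-2}(\tilde M)\,r$, is \emph{not} literally the inequality printed in the lemma; the two differ in sign and direction, and you should flag this rather than assert that your condition holds ``precisely under the quantitative hypothesis on $r$ as stated''. (A minor slip: the $\tilde Q_i$ correspond to the \emph{left} singular vectors of $\tilde M$, since one is recombining its rows.)

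The genuine gap is your final step. Your proposed SVD-perturbation argument for $P\in C$ is circular: you want to conclude that the discarded quadrics vanish at $P$ because the bottom singular values of the gradient matrix vanish there, but those bottom singular values vanish \emph{only if} $P$ already lies on $C$, which is what you are trying to prove. The discarded quadrics $\tilde Q_{g-1},\tilde Q_g,\ldots$ are genuinely independent polynomials, not approximate linear combinations of $\tilde Q_1,\dots,\tilde Q_{g-2}$; only their \emph{gradients at $\tilde P$} are nearly dependent, and that tells you nothing about their values at a nearby common zero of the first $g-2$ quadrics.

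The correct argument---which the paper sets up in the paragraph immediately preceding the lemma (``Recall that the curve $C$ is a local complete intersection'') but does not spell out inside the proof---is geometric rather than analytic. Since each $\tilde Q_i$ is a linear combination of the $Q_j\in I_2(C)$, the curve $C$ is contained in $V(\tilde Q_1,\dots,\tilde Q_{g-2})$. In the region where the Jacobian of $(\tilde Q_1,\dots,\tilde Q_{g-2})$ has full rank $g-2$ (which includes the ball in question, by the choice of the $\tilde Q_i$ and continuity), this zero locus is a smooth $1$-dimensional analytic set; since it contains the smooth curve $C$, the two coincide there. Your fixed point $P$ therefore lies on $C$. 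No quantitative SVD perturbation is needed for this step.
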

\begin{remark}[A note on distances] We consider points in $\PP^{g-1}$, which are represented by points in $\CC^g\smallsetminus\{0\}$, considered as a $\CC^*\cong\RR^+\times\mathbb{S}^1$ bundle
over $\PP^{g-1}$, where the $\RR^+$ coordinate of the bundle is the $L_2$ norm.
Denoting $x:=|(P_1/|P_1|)\cdot (P_2/|P_2|)|$, recall that
the Fubini-Study is defined as $\arccos(x)$; we use instead the distance
$|P_1-P_2|=\sqrt{|P_1|^2+|P_2|^2-2|P_1\cdot P_2|}$, and for representatives whose
norms are not necessarily 1 (but rather boundedly close to 1). By the Taylor
series for $\cos$, the function
$\frac{\arccos(x)}{\sqrt{1-x}}$
is monotonically decreasing from $[\pi/2]$ to $1$ on $[0, 1]$, hence
if one insists on controlling the quotient of the two metrics, it suffices to
control $|P_1|, |P_2|$.
\end{remark}
\begin{proof}[Proof of \Cref{lma:close_to_P}]
Consider the map
\[\begin{aligned}
    B(0,r)^{g-2}  &\to \CC^{g-2}\\
    z&\mapsto (\tilde{Q}_1(z),\ldots,\tilde{Q}_{g-2}(z))=((\tilde{P}+z)^t\tilde{Q}_i(\tilde{P}+z))_i=({\tilde{P}}^t\tilde{Q}_i\tilde{P})_i+2\tilde{M}z+(z^t\tilde{Q}_iz)_i.
\end{aligned}\]
Thus, we wish to solve the equation $0=({\tilde{P}}^t\tilde{Q}_i\tilde{P})_i+2\tilde{M}z+(z^t\tilde{Q}_iz)_i$. Our map is open, and
each of the coefficients in the degree $2$ tail is bounded by $r^2|\tilde{Q}_i|$. To complete the proof we argue that
$\sqrt{\sum_{i=1}^{g-2}|\Tilde{Q}_i|^2}<\sqrt{(g-2)\cdot g}<g.$
\end{proof}
Note that in the proof above we merely bound the norm, and thus we do not have to move to real coordinates, which we would have had to do if we wanted to explicitly refer to the closest point. We also remark that we make use of this technique of using effective forms of the implicit and inverse function theorem later in this section; e.g. in \Cref{lma:F}.
\begin{definition}\label{dfn:P_x}
  Considering complex parameter $x$, we denote
  $P_x:=P + x T + x^2 R,\quad T_x:=T+2xR$, let $M_x$ to be the matrix whose rows
  are $P_x^{t}Q_i$, let $M_{T,x}:=\left(\begin{smallmatrix}M_x\\P_x^\dagger\end{smallmatrix}\right)$, and
    $M_{R,x}:=\left(\begin{smallmatrix}M_{T,x}\\T_x^\dagger\end{smallmatrix}\right)$. Let $v_x$ be the closest point on $C$ to $P_x$. Finally, we denote the vector
      whose entries are $T^tQ_iR$ by $W_T$, and the vector whose entries are $R^tQ_iR$ by $W_R$. We refer to \Cref{fig:quartic} for intuitive views of $P, P_x, v_x$ and the tangency point. Finally we set $q_m:=\sqrt{\sum_{i=1}^{\dim I_2(C)}|Q_i|^2}$, where the norms are operators norms.
\end{definition}
\begin{figure}
    \centering
\includegraphics[scale=0.2]{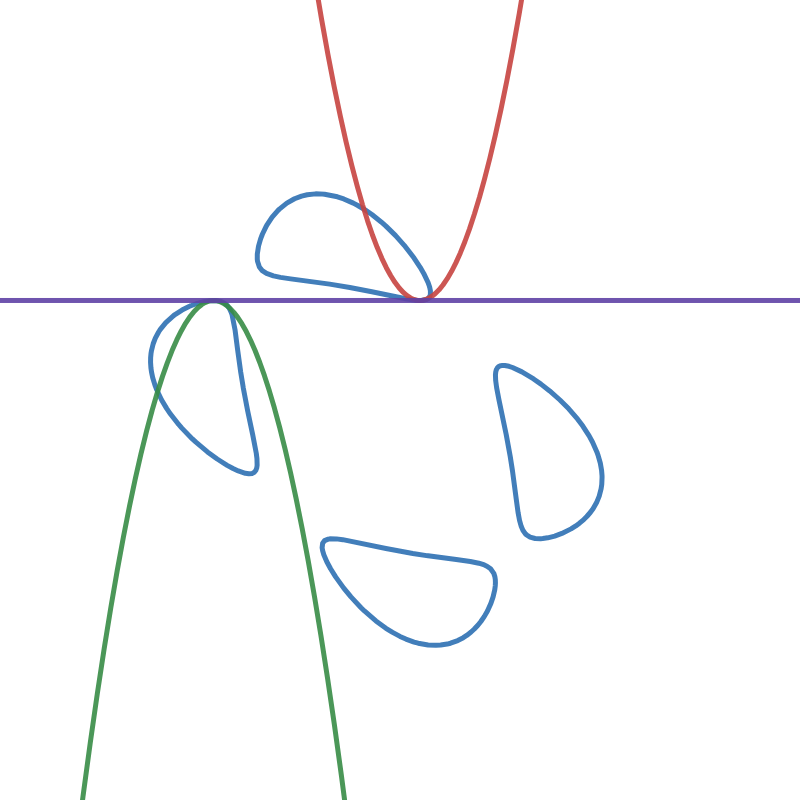}
\includegraphics[scale=0.2]{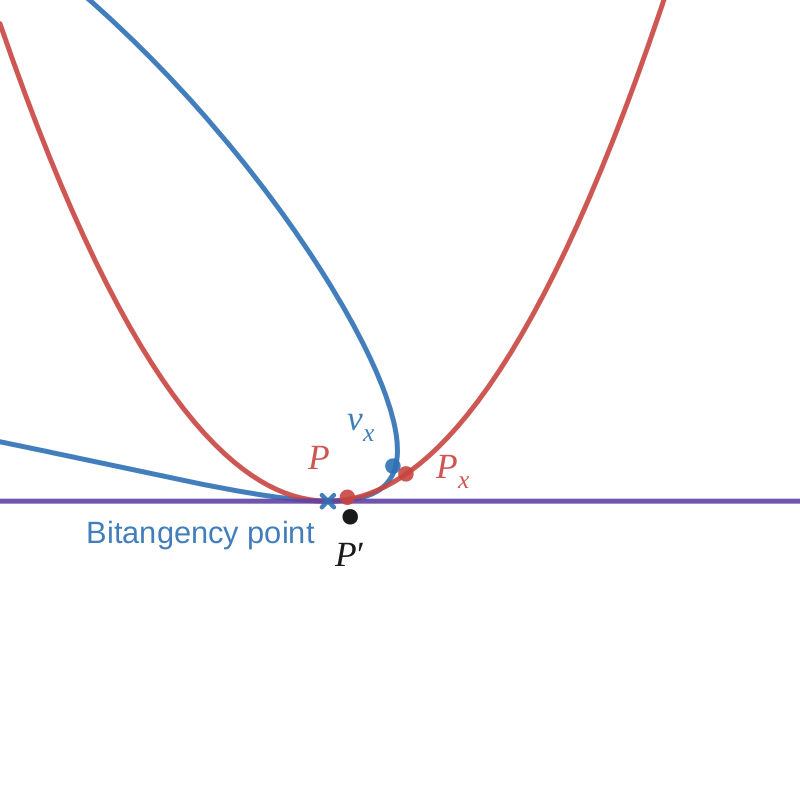}
    \caption{An illustration of the conic approximation argument for a plane quartic. Note that both $P,v_x$ and the tangency point are on the quartic curve, and that the conic approximation is at $P$ -- not at the tangency point.}
    \label{fig:quartic}
\end{figure}
The first of the two approximation tasks mentioned before \Cref{lma:close_to_P} is to show that the difference between the curve and its conic approximation is small. In essence, it means that we want to prove the existence of $r,\kappa_0,\kappa_1$ so that for $|x|<r$, we have $|P_x-v_x|<\kappa_0|x|^3,\frac{d}{dx}|P_x-v_x|<\kappa_1|x|^2$. Sadly, the actual expressions one attains for these
constants are complicated.
\begin{theorem}[Effective bounds on the quality of the conic approximation]\label{r_x}
  Assume that $|x|$ is smaller than
{\small
\[
r_x:=\min\Big(
\frac{1}{4|R|},
\frac{\sigma_{g-2}(M)}{2q_m},
\frac{|W_T|}{2|W_R|},
\left(\frac{\sigma_{g-2}(M)^2}{g|W_T|}\right)^{1/3},
\ \frac{\sigma_{g-1}(M_T)}{8q_m},
\ \left(\frac{\sigma_{g-1}(M_T)\sigma_{g-2}(M)}{4(q_m+2)|W_T|}\right)^{1/3},
\ \frac{\sigma_{g-2}(M)}{q_m+1}
\Big)
\]
}
then
\[
|P_x-v_x| / |x|^3 < \kappa_0:=|W_T| /\sigma_{g-2}(M),\qquad
\left|\frac{d(v_x-P_x)}{dx}\right|/|x|^2 < \kappa_1:=8|W_T|/\sigma_{g-1}(M_T).
\]
\end{theorem}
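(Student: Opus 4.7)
The plan is to combine a Taylor expansion of the quadric defining equations along the conic with the effective inverse-function argument of \Cref{lma:close_to_P}, and then differentiate the resulting implicit relation.

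First, I would expand $P_x^{t}Q_iP_x$ using $P_x=P+xT+x^2R$ and the three relations provided by \Cref{lma:PTR}: $P^tQ_iP=0$, $P^tQ_iT=0$, and $T^tQ_iT+2P^tQ_iR=0$. The $x^0$, $x^1$ and $x^2$ coefficients all vanish and one is left with
\[
P_x^{t}Q_iP_x = 2x^3\,(W_T)_i + x^4\,(W_R)_i.
\]
Taking the Euclidean norm in $i$ gives $\lvert(P_x^{t}Q_iP_x)_i\rvert\le 2\lvert x\rvert^3\lvert W_T\rvert+\lvert x\rvert^4\lvert W_R\rvert$, and the hypothesis $\lvert x\rvert<\lvert W_T\rvert/(2\lvert W_R\rvert)$ absorbs the quartic tail into the cubic leading term.

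Next, I would apply \Cref{lma:close_to_P} at $\tilde P=P_x$, for which I need a lower bound on $\sigma_{g-2}(\tilde M)$. Since $M_x-M$ has rows $(xT+x^2R)^tQ_i$, its operator norm is controlled by $\lvert x\rvert(1+\lvert x\rvert\lvert R\rvert)q_m$; Weyl's inequality together with the bounds $\lvert x\rvert<1/(4\lvert R\rvert)$ and $\lvert x\rvert<\sigma_{g-2}(M)/(2q_m)$ then yields $\sigma_{g-2}(M_x)\ge \tfrac12\sigma_{g-2}(M)$. Feeding the cubic estimate into the implicit radius inequality $2\sigma_{g-2}(\tilde M)r+gr^2<\lvert(P_x^tQ_iP_x)_i\rvert$ and solving for $r$, the $gr^2$ tail is absorbed by the threshold $\lvert x\rvert<\bigl(\sigma_{g-2}(M)^2/(g\lvert W_T\rvert)\bigr)^{1/3}$, which produces the claimed bound $\lvert P_x-v_x\rvert<\kappa_0\lvert x\rvert^3=(\lvert W_T\rvert/\sigma_{g-2}(M))\lvert x\rvert^3$.

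For the derivative bound I would differentiate the identity $v_x^tQ_iv_x=0$ in $x$, giving $v_x^tQ_iv_x'=0$, i.e. $\mathcal M_{v_x}v_x'=0$. Taylor expanding the linearization around $P_x$ produces
\[
M_x(v_x'-P_x')=-M_xP_x'+\text{terms of order }\lvert v_x-P_x\rvert\cdot\lvert v_x'\rvert.
\]
The explicit form $P_x'=T+2xR$ combined with the identity $MT=0$ shows that $M_xP_x'$ is of order $\lvert x\rvert^2\lvert W_T\rvert$, and the step above bounds the error term by the same order. To invert $M_x$ on the difference $v_x'-P_x'$ I would adjoin the normalization row $P_x^{\dagger}(v_x'-P_x')$, which is controlled because $v_x$ and $P_x$ are both close to $P$ up to normalization; this replaces $M_x$ by $M_{T,x}$ and lets me divide by $\sigma_{g-1}(M_{T,x})$. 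A second Weyl perturbation, controlled by $\lvert x\rvert<\sigma_{g-1}(M_T)/(8q_m)$, yields $\sigma_{g-1}(M_{T,x})\ge \tfrac12\sigma_{g-1}(M_T)$, and combining all the factors of $2$ and $4$ produces the stated constant $\kappa_1=8\lvert W_T\rvert/\sigma_{g-1}(M_T)$; the remaining cubic-type threshold in $r_x$ guarantees that the cross term $\lvert v_x-P_x\rvert\cdot\lvert v_x'\rvert$ is absorbed into the leading bound.

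The main obstacle is the derivative step. The first bound is essentially a single application of the effective inverse function theorem, but the derivative estimate requires me to carefully identify the slice in which $v_x'-P_x'$ lives so that the correct singular value $\sigma_{g-1}(M_T)$ (rather than the weaker $\sigma_{g-2}(M)$) appears, and to propagate all of the Weyl perturbation bounds and cubic/quartic cutoffs through the implicit differentiation. Each of the seven thresholds appearing in $r_x$ is matched to one of these bookkeeping steps, so the proof reduces to verifying that no single step loses more than a constant factor.
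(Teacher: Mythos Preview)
Your treatment of the first bound $|P_x-v_x|<\kappa_0|x|^3$ is essentially the paper's argument: expand $P_x^tQ_iP_x$, use the thresholds $|x|<\frac{1}{4|R|}$, $|x|<\frac{\sigma_{g-2}(M)}{2q_m}$, $|x|<\frac{|W_T|}{2|W_R|}$ together with Weyl to control $\sigma_{g-2}(M_x)$, then invoke \Cref{lma:close_to_P} with $r\sim |x|^3|W_T|/\sigma_{g-2}(M)$.

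The derivative step, however, has a genuine gap. After adjoining the row $P_x^\dagger$ to $M_x$ you obtain $M_{T,x}$, which still has a one-dimensional kernel, approximately spanned by $T$. Dividing by $\sigma_{g-1}(M_{T,x})$ therefore only bounds the component of $v_x'-P_x'$ \emph{orthogonal} to this kernel; the component along $T$ is invisible to your linear system. Your justification that the adjoined row $P_x^\dagger(v_x'-P_x')$ is ``controlled because $v_x$ and $P_x$ are both close to $P$'' conflates a positional bound on $|v_x-P_x|$ with a velocity bound; it does not touch the tangential component at all. Since both $v_x'$ and $P_x'$ are close to $T$, their difference along $T$ is a priori the difference of two quantities near $1$, and nothing in your argument makes this small.

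The paper avoids this by inserting an intermediate step you omit entirely: it first bounds $|T_x-T_{v_x}C|$, where $T_{v_x}C$ spans $\ker M_{T,v_x}$. This is done by computing explicitly $(M_{T,x}T_x)_i=2x^2((W_T)_i+x(W_R)_i)$, so $T_x$ is $O(|x|^2|W_T|/\sigma_{g-1}(M_{T,x}))$-close to $\ker M_{T,x}$, and then comparing $\ker M_{T,x}$ with $\ker M_{T,v_x}$ via \Cref{prp:kercont} and the Frobenius bound $|M_{T,v_x}-M_{T,x}|\le\sqrt{1+q_m^2}\,\gep_{P,x}$ coming from the first part. This is where the remaining thresholds in $r_x$ (notably $\frac{\sigma_{g-2}(M)}{q_m+1}$ and $(\frac{\sigma_{g-1}(M_T)\sigma_{g-2}(M)}{4(q_m+2)|W_T|})^{1/3}$) are actually used. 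Only \emph{after} the tangent-direction bound is in hand does the paper pass to the derivative, observing that $v_x'$ is proportional to $T_{v_x}C$ and $P_x'=T_x$, so $|\frac{d}{dx}(P_x-v_x)|=\min_a|T_x-aT_{v_x}C|\le|T_x-T_{v_x}C|$. That minimization over $a$ is precisely what absorbs the tangential degree of freedom your linear-algebra inversion cannot see.
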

\begin{proof}
We reformulate \Cref{dfn:M_TandM_R} for the point $v_x$ in the role of $P$:
Let $M_{v_x}$ be the matrix whose rows are $v_x^tQ_i$, and
$M_{T,v_x}:=\left(\begin{smallmatrix}M_{v_x}\\ v_x^\dagger\end{smallmatrix}\right)$.
Denote by $T_{v_x}$ a normalized vector that spans the kernel of $M_{v_x}$,
i.e., $T_{v,x}$ spans the tangent space to $C$ at $v_x$.
The proof is technical, and involves straightforward but tedious Frobenius norms
estimations of the matrices involved in the linear systems defining
$T_x, T_{v_x}$.

{\em We start by bounding $|P_x- v_x|$.}
By the definition of $P,T, R$ we have:
  \[
  P_x^tQ_iP_x=(P+xT+x^2R)^tQ_i(P+xT+x^2R)=2x^3T^tQ_iR+x^4R^tQ_iR=
  2x^3(W_T)_i+x^4(W_R)_i.
  \]
  Per \Cref{lma:close_to_P}, to achieve
  $|P_x-v_x|<|x^3W_T| / 2\sigma_{g-2}(M_x)$ is suffice to have:
  \[
  |x^3W_T/2\sigma_{g-2}(M_x)| < \left(|2x^3W_T|-|x^4W_R| - |g(x^3W_T/2\sigma_{g-2}(M_x))^2)|\right)/2\sigma_{g-2}(M_x).
  \]
  Subtracting the LHS this is equivalent to
  \[
  0 <\frac{|x|^3}{2\sigma_{g-2}(M_x)}\left(|W_T|-|xW_R| - g|x|^3(|W_T|/2\sigma_{g-2}(M_x))^2\right),
  \]
  and indeed,
  {\small
  \[
  |W_T|-|x|\cdot|W_R| - g|x|^3\left(\frac{|W_T|}{2\sigma_{g-2}(M_x)}\right)^2>
  |W_T|-\frac{|W_T|}{|2W_R|}|W_R|-g\frac{\sigma_{g-2}(M)^2}{g|W_T|}\left(\frac{|W_T|}{2\sigma_{g-2}(M_x)}\right)^2 =
   |W_T|\left(1-\frac{1}{2}-\frac{1}{4}\right).
\]}
  Since $|T|=1$, we have from the definition of the Frobenius norm that:
  \begin{equation}\label{eqn:frob_M}
    ||M_x-M||_F^2<q_m^2(|T|\cdot|x|+|R|\cdot|x|^2)^2<q_m^2(|x|+|R|\frac{1}{4|R|}|x|)^2=((5/4)q_m|x|)^2,
  \end{equation}
  and since $|x|<\sigma_{g-2}(M)/2q_m$ we have by \Cref{thm:sing_sum}:
  \[
  |\sigma_{g-2}(M_x)-\sigma_{g-2}(M)|<(5/4)q_m|x| < (5/8)\sigma_{g-2}(M).
  \]
  Hence, $|P_x-v_x|<\gep_{P,x}:=|x^3W_T| /\sigma_{g-2}(M)$; thus establishing the
  claim about $\kappa_0$.

  {\em We now bound $|T_x-T_{v_x}C|$.}
  Per the definition of $P, T, R$, we have
  \[
  (M_{T,x}\cdot T_x)_i=(T+2xR)^tQ_i(P+xT+x^2R) =
  2x^2(T+xR)^tQ_iR=2x^2((W_T)_i+x(W_R)_i).
  \]
  Hence, the distance from $T_x$ to $\ker M_{T,x}$ is bounded by
  $|2x^2(W_T+xW_R)|/\sigma_{g-1}(M_{T,x}$).
  By \Cref{prp:kercont} we get:
  \[
  |T_{v_x}C-T_x| < |M_{T,v_x}-M_{T,x}|/(\sigma_1(M_{T_,x})-|M_{T,v_x}-M_{T,x}|)+2|x|^2\cdot|W_T+xW_R|/\sigma_{g-1}(M_{T,x}).
  \]
  By the definition of Frobenius norm,
  \Cref{eqn:frob_M} and the definition of $\gep_{P,x}$ above, we see that
  \[||M_{T,x}-M_T||_F^2<((5/4)q_m\cdot|x|)^2+\gep_{P,x}^2,\]
  whereas simply by the definition of the Frobenius norm, and the
  definitions of $M_{T,x}, M_{T,v_x}$ we have:
  \[|M_{T,v_x}-M_{T,x}|<||M_{T,v_x}-M_{T,x}||_F< \sqrt{1+q_m^2}\gep_{P,x}.\]
  Using these bounds we may bound $|T_{v_x}C-T_x|$ with
  {\small
  \[\begin{aligned}
  \ &\frac{\sqrt{1+q_m^2}\gep_{P,x}}{\sigma_1(M_T)-\sqrt{(5/4)^2q_m^2|x|^2+\gep_{P,x}^2}-\sqrt{1+q_m^2}\gep_{P,x}}+\frac{2|x|^2(|W_T|+|x|\cdot|W_R|)}{\sigma_{g-1}(M_T)-\sqrt{(5/4)^2q_m^2|x|^2+\gep_{P,x}^2}}\\
  <&\frac{(1+q_m)\gep_{P,x}}{\sigma_1(M_T)-2q_m|x|-2\gep_{P,x}-q_m\gep_{P,x}}+\frac{2|x|^2|W_T|(1+|W_R|/2|W_R|)}{\sigma_{g-1}(M_T)-2q_m|x|-\gep_{P,x}}
  <\frac{(1+q_m)\gep_{P,x}+3|x|^2\cdot|W_T|}{\sigma_{g-1}(M_T)-2q_m|x|-(q_m+2)\gep_{P,x}}\\
  <&\frac{(1+q_m)\frac{\sigma_{g-2}(M)}{q_m+1}|x|^2\frac{|W_T|}{\sigma_{g-2}(M)}+3|x|^2\cdot|W_T|}{
    \sigma_{g-1}(M_T)-2q_m\frac{\sigma_{g-1}(M_T)}{8q_m}-(q_m+2)\frac{\sigma_{g-1}(M_T)\sigma_{g-2}(M)}{4(q_m+2)|W_T|}\cdot\frac{|W_T|}{\sigma_{g-2}(M)}}=
  \frac{(1+3)|x|^2|W_T|}{\sigma_{g-1}(M_T)(1-\frac{1}{4}-\frac{1}{4})}=
  8|x|^2\frac{|W_T|}{\sigma_{g-1}(M_T)}.
  \end{aligned}\]
  }
  {\em Finally we bound $|\frac{d}{dx}(P_x-v_x)|$.}
  Let $q_1:\CC\to C$ be a local coordinate above for $C$ about $v_x$ (i.e.
  $q_1(0)=v_x$, and let $q_2$ be the map $x\mapsto P_x$.
  We consider the differential of the map $x,y\mapsto q_1(x)-q_2(y)$ at $x,0$.
  This derivative is of the form $(T_x, -aT_{v_x}C)$ for some constant $a$,
  which depends on the choice of $q_2$, and is controlable by
  composing $q_2$ with multiplication by a scalar. Hence,
  \[
  \frac{d}{dx}(P_x-v_x)|=\min_a|T_x-a T_{v_x}C| < 8|x|^2\frac{|W_T|}{\sigma_{g-1}(M_T)}.
  \]
\end{proof}
\subsection*{From an approximate theta hyperplane to a hyperplane}
In what follows, we apply the conic approximation for $g-1$ points
simultaneously. To this end, we define
$P_i, T_i, R_i\in\CC^g,\ P_{i,x},v_{i,x}$ as
in \Cref{dfn:M_TandM_R}, \Cref{dfn:P_x} and $r_{i,x}$ as in \Cref{r_x}.
We also define $c_i:=|R_i|,\tilde{R}_i:=R_i/c_i$ and $u_{i}(x):=v_{i,x}-P_{i,x}$.
Then by \Cref{r_x}, the function $u_i(x)$ satisfies $e_i:=|u_i(x)| < \kappa_{0,i} |x^3|$,
$e'_i:=|d u_i(x)/d x| < \kappa_{1,i}|x|^2$ for $\kappa_{0,i},\kappa_{1,i}$ as in \Cref{r_x}.
We also define:
\begin{align}\label{eq:definitionF}
F:\prod^{g-1}_{i=1} \BB_i&\to\CC^{g-1}\\
(x_1,\ldots,x_{g-1})&\mapsto \left(\nabla_{x_j} v_{j,x_j}\wedge \bigwedge_{i=1}^{g-1} v_{i,x_i}\right)_j=\left(\det(\nabla_{x_j} v_{j,x_j}, v_{1,x_1},\ldots, v_{g-1,x_{g-1}})\right)_j. \nonumber
\end{align}
The wedge product $\wedge_i v_{i,x_i}$ is an equation satisfied by the hyperplane
passing through small perturbations on $C$ of the points $P_i$ whereas
the $\nabla_{x_j} v_{j,x_j}$ spans the tangent
space to the curve at the perturbation of $P_j$. Hence, the $j$th coordinate of
$F$ essentially checks by how much the tangent direction
deviates from the hyperplane determined the wedge product.
Whence, our plan of attack is to formalize and prove the following claim:
if $F(0)$ is close to $0$,
and the Jacobian of $F$ is sufficiently far from
being singular, and $F$ is well approximated by 1st order approximation, then
$F$ attains $0$ near the origin.

In order to give the needed quantitative bounds for the Jacobian matrix of $F$, denoted $\cJ_F$, we set the following notations:
\[\begin{aligned}
\mu_i:\RR^+&\to\RR^+\qquad&\eta_i:(\RR^+)^{n-1}&\to\RR^+\\
r&\mapsto 1+2|c_i|r+\kappa_{1,i}r^2,\qquad&(r_1,\ldots \hat{r_i}\ldots,r_n)&\mapsto\prod_{j\neq i}(1+r_j+|c_j|r_j^2+\kappa_{0,j}r_j^3),
\end{aligned}\]
where the hat symbol \, $\hat{}$ \, removes the entry in the corresponding position. We will customize the notation $\eta_i(r_1,\ldots \hat{r_i}\ldots,r_n)$ by using $\eta_i(r_i)$ instead.
For a function $\psi:\RR^N\to\RR^M$ or $\psi:\CC^N\to\CC^M$, we have the following notation for the expansion of $\psi$ around $0$:
\begin{equation}\label{eq:jacobian}
\Term_1(\psi)(x):=\cJ_\psi\restriction_0 x,\quad \Tail_2(\psi):=\psi-\psi(0)-\Term_1(\psi).
\end{equation}
\begin{lemma}\label{lma:F}
In the setting above:
  \[{\cJ_F}_{i,j}=\begin{cases}2c_i\det(\tilde{R}_i,P_1,\ldots, P_n),&i=j\\
\det(T_i, P_1,\ldots , P_{j-1},T_j,P_{j+1},\ldots , P_n),&i\neq j\end{cases}.\]
Moreover, if $|H'|=1$ and
\begin{equation}\label{eq:lma_F}
\left|\langle\wedge_i v_{i,x}(x_i)\restriction_0, H' \rangle\cdot\cJ_F\restriction_0^{-1}\left((\nabla_{x_j}{v_{j,x}}\restriction_0) \wedge H'\right)_j\right| < \min_ir_i-\Tail_2(\gS_i {\mu_i(r_i)\eta_i(r_i)})/\sigma_{n}(\cJ_F\restriction_0)
\end{equation}
where the norm is the symmetric tensor norm,
then there is a solution for $F=0$ on $\prod_i \BB_i$.
\end{lemma}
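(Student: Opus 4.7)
The plan is to combine a direct Jacobian computation with an effective inverse-function-theorem argument that turns the hypothesized smallness inequality into an honest zero of $F$ inside $\prod_i\BB_i$.

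\emph{Step 1 (Jacobian).} I would expand $F_i=\det(\nabla_{x_i}v_{i,x_i},v_{1,x_1},\ldots,v_{g-1,x_{g-1}})$ column by column via multilinearity of the determinant. At $x=0$ we have $v_{j,x_j}|_0=P_j$ and $\nabla_{x_j}v_{j,x_j}|_0=T_j$, since $u_j(x)=v_{j,x}-P_{j,x}$ vanishes to order three by \Cref{r_x}. For $i\neq j$ only the $(j+1)$-st column of $F_i$ depends on $x_j$, and its derivative yields the claimed ${\cJ_F}_{i,j}=\det(T_i,P_1,\ldots,T_j,\ldots,P_{g-1})$. For $i=j$ both column $1$ and column $j+1$ of $F_j$ depend on $x_j$; the $(j{+}1)$-st-column contribution places $T_j$ in two columns of the determinant and hence vanishes, while the first-column contribution gives $\partial^2 v_{j,x_j}/\partial x_j^2|_0=2R_j=2c_j\tilde R_j$, producing ${\cJ_F}_{j,j}=2c_j\det(\tilde R_j,P_1,\ldots,P_{g-1})$.

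\emph{Step 2 (existence of a zero).} The natural approach is a fixed-point argument for $G(x):=-\cJ_F|_0^{-1}(F(0)+\Tail_2(F)(x))$ on $\prod_i\BB_i$. I would first rewrite $F(0)$ in the form appearing in the lemma's hypothesis: since the tangency points $P_i$ all lie on $H'$, the $(g-1)$-form $\wedge_i P_i$ represents the same hyperplane as $H'$, so under $\wedge^{g-1}\CC^g\cong\CC^g$ and with $|H'|=1$ one has $\wedge_i P_i=\langle\wedge_i P_i,H'\rangle\,H'$, whence
\[
F_j(0)=T_j\wedge(\wedge_i P_i)=\langle\wedge_i P_i,H'\rangle\cdot(T_j\wedge H').
\]
This identifies $\cJ_F|_0^{-1}F(0)$ with the expression on the LHS of the displayed hypothesis. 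The tail term is handled by Hadamard's inequality: $|F_i(x)|\le|\nabla_{x_i}v_{i,x_i}|\cdot\prod_j|v_{j,x_j}|\le\mu_i(r_i)\eta_i(r_i)$ for $|x_k|\le r_k$, after which subtracting the zeroth- and first-order parts yields a bound on $\Tail_2(F)$ of the form appearing in the lemma. Combined with the operator estimate $|\cJ_F|_0^{-1}y|\le|y|/\sigma_n(\cJ_F|_0)$, the hypothesis is precisely the condition that $G$ maps $\prod_i\BB_i$ into itself, and a standard fixed-point theorem then produces a zero of $F$ in this polydisc.

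\emph{Main obstacle.} The column-by-column differentiation and the Hadamard estimate are essentially routine; the genuine delicacy is in the quantitative bookkeeping. One must convert the bound on $|F_i|$ into a bound on its \emph{second}-order tail $\Tail_2(F_i)$ (so that the constant and linear parts are genuinely subtracted, not merely discarded), match the symmetric-tensor norm used on the LHS of the hypothesis with the $\sigma_n$-based operator norm on the RHS, and verify throughout $\prod_i\BB_i$ that each $x_i$ stays within the conic-approximation radius $r_{i,x}$ of \Cref{r_x} so that the bounds defining $\mu_i$ and $\eta_i$ genuinely apply.
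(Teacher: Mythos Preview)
Your Jacobian computation and your identification of $F(0)$ with the left-hand side of the hypothesis match the paper's argument exactly, and the overall strategy (linearize, bound the second-order tail, then invoke an effective inverse/open-mapping statement) is the same as the paper's. The paper phrases the last step as ``the image of $F$ contains a ball of radius $\sigma_n(\cJ_F|_0)\min_i r_i-\sup|\Tail_2 F|$ around $F(0)$'' rather than as a fixed point for $G$, but the two are equivalent.

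The one place where your sketch diverges from the paper and, as written, does not go through is the tail estimate. Bounding $|F_i(x)|\le\mu_i(r_i)\eta_i(r_i)$ by Hadamard and then ``subtracting the zeroth- and first-order parts'' does \emph{not} yield $|\Tail_2 F_i|\le\Tail_2(\mu_i\eta_i)$: a pointwise bound $|f|\le g$ gives no control on $|\Tail_2 f|$ in terms of $\Tail_2 g$. The paper avoids this by expanding the determinant $F_k$ fully by multilinearity into a polynomial in the variables $x_j,\,u_j,\,du_j/dx_j$ whose coefficients are determinants with rows among $P_i,T_i,\tilde R_i$ (all of norm $1$, hence each such determinant is $\le 1$). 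Replacing each coefficient by $1$ and each $u_j,du_j/dx_j,x_j$ by $\kappa_{0,j}r_j^3,\kappa_{1,j}r_j^2,r_j$ produces \emph{exactly} the expansion of $\mu_k(r_k)\eta_k(r_k)$, so one gets a monomial-by-monomial majorization and hence $|\Tail_2 F|\le\Tail_2(\sum_i\mu_i\eta_i)$ directly. You correctly flagged this passage as the main obstacle; the fix is this term-by-term comparison rather than a global Hadamard bound.
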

\begin{proof}
The $k$th entry of the image of $F$ is
\[
\det(T_k+2x_kc_k\tilde{R}_k+du_k/dx_k, P_1+x_1T_1+c_1x_1^2\tilde{R}_1+u_1,\ldots, P_n+x_nT_n+c_nx_n^2\tilde{R}_n+u_n).
\]
Expanding the expression above by their $x_j, u_j, du_j/dx_j$ terms , we get $T_k\cdot\wedge_i P_i$ plus
\[
2c_kx_k\det(\tilde{R}_k,P_1,\ldots, P_n) + \sum_{j\neq k} x_j\det(T_k, P_1,\ldots P_{j-1}, T_j,P_{j+1}\ldots P_n)
\]
plus $\Tail_2F,$ which is of the form:
{\small\[
\sum (\text{polynomial in } u_j, du_j/dx_j, x_j,c_j)\times
\text{a determinant whose rows are among }P_i,\tilde{R}_i,T_i.
\]}
To bound $|\Tail_2F|$ we first note that the norms of the rows
of the determinants are all $1$, and so the determinants are bounded by $1$.
Next we observe that
if we substitute each occurrence of $u_j,du_j/dx_j,x_j$ by
$\kappa_{0,j}r_j^3, \kappa_{1,j}r_j^2, r_j$ respectively, and each of the determinants by $1$,
we get exactly  $\Tail_2\left(\gS_i \mu_i(r_i) \eta_i(r_i)\right)$.
As the spectral norm of an
operator is bounded by the norm of the sum
of the entries, we get
$|\Tail_2(F)\restriction_{\prod \BB_i}|\leq \Tail_2\left(\gS_i \mu_i(r_i) \eta_i(r_i)\right)$.
Expanding $F$ around $x=0$ we get $F = F(0) + (\cJ_F\restriction_0)\cdot x + \Tail_2(F)$. Hence, the image of $F$ contains an open ball of radius $\sigma_{n}(\cJ_F\restriction_0)\min_ir_i-\sup_{\prod\BB_i}\Tail_2F$ around $F(0)$.
As $H'$ is the normalization of $\wedge_iP_i=\wedge_iv_{i,0}$, up to a
norm 1 scalar, the last part of the result follows.
\end{proof}
\subsection*{Dealing with inaccuracies of the input points}
We finally come to our last barrier in this section: improving \Cref{lma:F}
to deal with the approximations of the points $P_i$ given by $P'_i$ instead
of the $P_i$ themselves.
\begin{definition}[Notations for computing the numerical stability]\label{dfn:stability_notations}
Recall that $H'$ denotes an approximated theta hyperplane. We normalize $H'$ so that $|H'|=1$. Let $P,P'$ be representation in $\CC^g$ of points in $\PP^{g-1}$ such that
\[
\PP P\in C\cap Z(H'),\qquad|P|=|P'|=1,\qquad|P-P'|<\gep_0,\qquad \sum_i|{P'}^tQ_iP'|^2<\gep_0^2.
\]
Denote by $T',R',M_{T'},M_{R'}$ the machine computed analogs of
$T,R,M_T,M_R$ from \Cref{lma:PTR}, represented with floating-point numbers.
More precisely, $T'$ is the computed lowest singular vector of $M_T'$ of norm 1 whereas  $R'$ is computed via \Cref{e:rqr}, but reformulated in terms of the computed $P'$ and $T'$. The requirement on $|P-P'|$ follows from the requirement on $\sum_i|{P'}^tQ_iP'|^2$ for an appropriate choice of $\gep_0$ by \Cref{lma:close_to_P}. We refer again to \Cref{fig:quartic} for the relation between $P'$ and $P$.
\end{definition}
\begin{proposition}[Bounding the difference $|T-T'|$]\label{prp:tcont}
Let $T'$ and $T$ be the vectors defined in \Cref{dfn:stability_notations}. Then
\begin{equation}\label{eq:eps1}
    |T'-T|<\gep_1:=1 + |T'| - 2\cos(\verb|E|\cdot\sigma_1'(M_{T'})/{\verb|ddisna|}(M_{T'}, g))+\sigma_g(M_T)^+/\sigma_{g-1}(M_T)^-,
\end{equation}
where $\sigma_i(M_T)^\pm$ (see \Cref{dfn:abovebelow}) may be bounded via
\[
|\sigma_i'(M_{T'})-\sigma_i(M_T)| < \sigma_1'(M_{T'})\cdot\verb|E|+\gep_0\sqrt{1+\sum_j(|Q_j|+g\verb|E|)}.
\]
\end{proposition}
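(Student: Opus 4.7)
The bound $\gep_1$ naturally splits into three summands that I would control separately. Introduce the auxiliary vector $T_*$, defined as the exact (infinite-precision) smallest right singular vector of the matrix $M_{T'}$, and write
\[
|T - T'| \leq |T - T_*| + |T_* - T'|.
\]
The first summand of $\gep_1$ controls $|T_* - T'|$: by the backward-error analysis of LAPACK's SVD, the angle between the machine-computed $T'$ and its ideal $T_*$ is bounded by $\verb|E|\cdot\sigma_1'(M_{T'})/\verb|ddisna|(M_{T'},g)$, where \verb|ddisna| returns the smallest gap between $\sigma_g(M_{T'})$ and the remaining singular values of $M_{T'}$. Translating this angle into a Euclidean distance via $|u - v|^2 = |u|^2 + |v|^2 - 2|u||v|\cos\theta$ with $|T|=1$ yields the first summand.

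The third summand controls $|T - T_*|$ by a $\sin\Theta$ argument tailored to the rank-deficient matrix $M_T$. Since $P\in C$ and $T$ is the unit tangent direction orthogonal to $P$, we have $\sigma_g(M_T) = 0$ exactly, with $\ker M_T = \CC\cdot T$. Decomposing $T_* = \ga T + T_*^\perp$ orthogonally, the component $T_*^\perp$ lies in the span of the right singular vectors of $M_T$ with non-zero singular value, hence
\[
\sigma_{g-1}(M_T)\cdot|T_*^\perp| \;\leq\; |M_T T_*^\perp| \;=\; |M_T T_*| \;\leq\; |M_{T'}T_*| + \|M_T - M_{T'}\|\cdot|T_*| \;=\; \sigma_g(M_{T'}) + \|M_T - M_{T'}\|.
\]
The right-hand side is a certified upper bound on $\sigma_g(M_T)$, and thus $|T_*^\perp|\leq \sigma_g(M_T)^+/\sigma_{g-1}(M_T)^-$. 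Combined with $|\ga|\approx 1$ coming from $|T_*|\approx 1$, this yields the third summand of $\gep_1$.

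The final sentence of the proposition supplies the certified bounds $\sigma_g(M_T)^+$ and $\sigma_{g-1}(M_T)^-$. Its proof combines the Weyl perturbation estimate $|\sigma_i(M_T) - \sigma_i(M_{T'})| \leq \|M_T - M_{T'}\|_F$ with the LAPACK backward-error estimate $|\sigma_i'(M_{T'}) - \sigma_i(M_{T'})| \leq \sigma_1'(M_{T'})\verb|E|$. The Frobenius norm $\|M_T - M_{T'}\|_F$ is estimated row by row: for $j\leq\dim I_2(C)$, the row $(P - P')^t Q_j$ of $M_T - M_{T'}$ plus the floating-point contribution of size $g\verb|E|$ incurred in computing $(P')^t Q_j$ has norm at most $\gep_0(|Q_j| + g\verb|E|)$, and the last row $(P-P')^\dagger$ contributes $\gep_0$; summing the squared contributions gives the expression under the square root.

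I expect the main obstacle to be the second stage: certifying $\sigma_{g-1}(M_T)^- > 0$ so that the $\sin\Theta$ denominator is admissible, and verifying that no factor is lost by substituting the perturbation bound in place of a genuine singular-value gap (since in exact arithmetic $\sigma_g(M_T) = 0$, the gap is $\sigma_{g-1}(M_T)$ itself, but only its lower bound $\sigma_{g-1}(M_T)^-$ is accessible). A secondary, more tedious difficulty is the bookkeeping of normalisation conventions for $T'$ and the conversions between $|T - T_*|$, $|T - T_*|^2$ and angle bounds so that the composite bound matches the precise form of $\gep_1$.
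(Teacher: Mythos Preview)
Your proposal is correct and follows essentially the same route as the paper. The paper's proof is terser: it handles the first three summands of $\gep_1$ by invoking the \verb|LAPACK|/\verb|ddisna| angle bound exactly as you do, and for the final summand it simply cites \Cref{prp:kercont} rather than inlining the $\sin\Theta$-type argument you spell out (your direct estimate $\sigma_{g-1}(M_T)|T_*^\perp|\le|M_TT_*|\le\sigma_g(M_{T'})+\|M_T-M_{T'}\|$ is precisely the proof of that proposition). Your row-by-row Frobenius bound on $\|M_T-M_{T'}\|_F$ also matches the paper's computation verbatim, including the $g\verb|E|$ term accounting for the $g$ floating-point operations in forming $Q_jP'$. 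The bookkeeping worries you flag at the end are real but minor, and the paper treats them with the same level of informality you do.
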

\begin{proof}
We start with the bound on $\sigma_i(M_T)^\pm$:
Per \Cref{prp:computed_sing}, \Cref{thm:sing_sum} we have
\[
|\sigma'_i(M_{T'})-\sigma_i(M_T)|<\sigma'_1(M_{T'})\cdot {\verb|E|} + |\sigma_i(M_{T'})-\sigma_i(M_T)|<\sigma'_1(M_{T'})\cdot {\verb|E|} +|M_{T'}-M_T|.
\]
We bound the latter with $||M_{T'}-M_T||_{F}$: The difference
between the last line of $M_{T'}$ and the last row of $M_T$ is simply $({P'} - P)^\dagger$, and the difference between all the other rows is given by $Q_i(P'-P)$. The quantity $g$\verb|E| accounts for the $g$ operations involved in computing each $Q_iP'$ as the entries of each $Q_i$ are small integers and the norm of the entries of $P'$ are $<1$. Hence, $(||M_{T'}-M_T||_F/\gep_0)^2<1+\sum_j(|Q_j|+g\verb|E|)$.

For the first three summands in \Cref{eq:eps1}, per {\verb|LAPACK|}'s documentation of the subroutine {\verb|ddsina|} as discussed in \Cref{rmr:lapack_oracle}, the angle between the true $i$th singular vector and the computed singular vector of a matrix $A$ is given by $\verb|E|\cdot\sigma_1'(A)/{\verb|ddisna|}(A, i)$. The last summand follows from \Cref{prp:kercont}.
\end{proof}
\begin{proposition}[Bounding the difference $|R-R'|$]\label{prp:rcont}
  \ \\Let $\gep_{M_{R'}}:=(1+2{\verb|E|})\sqrt{\gep_0^2(1+(\sum_j|Q_j|+g\verb|E|)) + |T-T'|^2}$, let
 {\small\[
     \gd_R:=(\text{the sum of the last $\dim I_2(C) + 2 - g$ }\verb+zgelss+\text{ return values})\cdot (1+(\dim I_2(C) + 2-g)\verb|E|),
     \]}
 where the \verb|zgelss| return values are per \verb|LAPACK|s accuracy estimate
 of the system
$$M_{R'} R'=-\frac{1}{2}\left(\begin{smallmatrix}{T'}^tQ_1T'\\ \vdots\\ {T'}^tQ_{\dim I_2(C)}T' \\ 0 \\ 0 \end{smallmatrix}\right)$$ per \Cref{rmr:lapack_oracle},
 and let $\xi_T$ an upper
  bound on the top singular value of the matrix whose $j$th row is given by
  ${T'}^tQ_j$. Then
  $|R-R'|$ is bounded by
  {\small
\begin{equation}\label{eq:eps_2}
  \gep_2 := \frac{1}{2}|({T'}^tQ_iT')_i|\cdot \left(\frac{\gep_{M_R}}{\sigma_g{M_R}^-(\sigma_g(M_R)^--\gep_{M_R})}+(\frac{1}{2}(2\xi_T\gep_1 +\gep_1^2q_m)+\gd_R)/(\sigma_g(M_R)^--\gep_{M_R})\right),
\end{equation}}
where $\sigma_i(M_R)^\pm$ may be bounded via
\[
|\sigma_i'(M_{R'})-\sigma_i(M_R)| < \sigma_1'(M_{R'})\cdot\verb|E|+\gep_{M_{R'}}.
\]
\end{proposition}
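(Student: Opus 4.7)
The plan is to mimic the strategy of \Cref{prp:tcont}, now for the least-squares system whose minimum-norm solution defines $R$. Three error sources must be tracked: the perturbation $M_R - M_{R'}$ of the matrix, the perturbation of the right-hand side $b := -\frac{1}{2}(T^tQ_iT,0,0)^t$ by its computed analog $b' := -\frac{1}{2}({T'}^tQ_iT',0,0)^t$, and the numerical residual $\delta_R$ already reported by \verb|zgelss|.

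First I would bound $\|M_R - M_{R'}\|_F$ row by row. The first $\dim I_2(C)$ rows differ by $(P-P')^tQ_i$, the penultimate row by $(P-P')^\dagger$, and the last row by $(T-T')^\dagger$. Using $|P-P'|<\gep_0$ from \Cref{dfn:stability_notations} and $|T-T'|<\gep_1$ from \Cref{prp:tcont}, and inflating by $(1+2\verb|E|)$ to absorb the floating-point cost of actually forming $M_{R'}$ (each $Q_iP'$ involves $g$ arithmetic operations), I obtain $\|M_R-M_{R'}\|\leq \gep_{M_{R'}}$. The singular-value claim is then immediate from Weyl's inequality (\Cref{thm:sing_sum}) combined with \Cref{prp:computed_sing}, exactly as in the proof of \Cref{prp:tcont}.

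The main step is bounding $R-R'$ via the decomposition
\[
R - R' \;=\; (M_R^+ - M_{R'}^+)\,b \;+\; M_{R'}^+\,(b-b') \;+\; (\text{solver residual of size }\gd_R).
\]
Provided $\gep_{M_R} < \sigma_g(M_R)^-$, both matrices have trivial kernel on the relevant subspace, so the resolvent identity $M_R^+ - M_{R'}^+ = -M_R^+(M_R - M_{R'})M_{R'}^+$ yields $\|M_R^+ - M_{R'}^+\| \leq \gep_{M_R}/\bigl(\sigma_g(M_R)^-(\sigma_g(M_R)^- - \gep_{M_R})\bigr)$, matching the first summand in $\gep_2$. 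For the right-hand side I would expand ${T'}^tQ_iT' - T^tQ_iT = -2{T'}^tQ_i(T-T') - (T-T')^tQ_i(T-T')$; taking the $\ell_2$ norm in $i$ and using that the linear operator $v\mapsto ({T'}^tQ_iv)_i$ has spectral norm $\xi_T$ while $v\mapsto (v^tQ_iv)_i$ grows as $q_m|v|^2$, I get $\|b-b'\| \leq \frac{1}{2}(2\xi_T\gep_1 + \gep_1^2 q_m)$. Combining this with $\|M_{R'}^+\| \leq 1/(\sigma_g(M_R)^- - \gep_{M_R})$, adding the \verb|zgelss| residual bound $\gd_R$ from \Cref{rmr:lapack_oracle}, and factoring through the norm $|b'| = \frac{1}{2}|({T'}^tQ_iT')_i|$ assembles the expression for $\gep_2$.

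The main obstacle is not conceptual but purely bookkeeping: juggling spectral versus Frobenius norms at each step, confirming that every denominator $\sigma_g(M_R)^- - \gep_{M_R}$ stays strictly positive (which is the condition the certificate actually needs to check in practice), and correctly folding in the solver-reported accuracy $\gd_R$ — itself derived from the tail of \verb|zgelss|'s singular-value output — so that no single error source is either double-counted or omitted.
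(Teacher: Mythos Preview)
Your approach is essentially the same as the paper's. The paper bounds $\|M_{R'}-M_R\|_F$ row by row (exactly your first step), derives the singular-value inequality via \Cref{prp:computed_sing} and \Cref{thm:sing_sum} as in \Cref{prp:tcont}, bounds $|({T'}^tQ_iT')_i-(T^tQ_iT)_i|$ using $\xi_T$ and $q_m$ just as you do, and then invokes \Cref{prp:solcont} directly; your resolvent-identity decomposition $M_R^+-M_{R'}^+=-M_R^+(M_R-M_{R'})M_{R'}^+$ is precisely what is inside the proof of \Cref{prp:solcont}, so you have simply unpacked that black box rather than cited it.
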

\begin{proof}
  We start by bounding $\sigma_i(M_R)^\pm$:
  Note that $||M_{R'}-M_R||_F <\gep_{M_{R'}}$ by the definition of the matrix $M_R$, which has been formed from the matrix $M$ with the additional two equations as introduced in \Cref{dfn:M_TandM_R}. As in the proof of \Cref{prp:tcont},
  we deduce that  $|\sigma'_i(M_{R'})-\sigma_i(M_R)|< \sigma'_1(M_{R'})\cdot\verb|E| + \gep_{M_{R'}}$.
 Per the definition of $\xi_T$ we have:
$|({T'}^tQ_iT')_i-(T^tQ_iT)_i|< \frac{1}{2}(2\xi_T\gep_1 +\gep_1^2q_m)$,
hence the result follows from \Cref{prp:solcont}.
\end{proof}
In what follows, we work out the version of \Cref{lma:F} which accounts for the numerical errors $|P_i-P'_i|$, and the resulting errors $|T_i-T'_i|, |R_i-R'_i|$ which we bounded above. We do not elaborate here on the accuracies of the terms of $r_x$ or $\kappa_{0,i}$ and $\kappa_{1,i}$, as for our purposes it suffices to
show that all these numbers are $O(1)$, whereas the numerical errors in
$P_i, T_i, R_i$ are orders of
magnitude smaller (we discuss this in detail in \Cref{sec:program}).
This is not the case for \Cref{lma:F} as the expressions appearing there, e.g.
the top singular value of $\cJ_F$, are complicated, and so the accumulated error
analysis is complicated.

Suppose we are given $g-1$ many points $P'_{i}$ that are estimates of the true tangency points. Let $P_{i}, P'_{i}$ be representations in $\CC^g$ of points in $\PP^{g-1}$ such that
\[
\PP P_{i}\in C\cap Z(H'),\qquad|P_{i}|=|P'_{i}|=1,\qquad|P_{i}-P'_{i}|<\gep_{0,i},\qquad \sum_j|{P'_{i}}^tQ_jP'_{i}|^2<\gep_{0,i}^2.
\]
Let $T_i, T'_i, \tilde{R}_i, \tilde{R}'_i$ etc. be all the relevant objects mentioned in \Cref{dfn:stability_notations} with the extra subscript $i$ which refers to the $i$th putative tangency points $P_{i}$ and $P'_{i}$. So e.g. $\epsilon_{1,i}$ is the bound for $|T_i'-T_i|$ considering the corresponding $T_i$ and $T_i'$ and similarly, $\epsilon_{2,i}$ concerns the bound on $|R_i-R'_i|$.

We now rewrite the function defined in \Cref{eq:definitionF} in its computed version, using the prime symbol for denoting a computed version as usual.
\begin{align}\label{eq:definitionF'}
F':\prod_i \BB_i&\to\CC^{g-1} \\
(x_1,\ldots,x_{g-1})&\mapsto \left(\nabla_{x_j} f'_j(x_j)\wedge \bigwedge_{i=1}^{g-1} f'_i(x_i)\right)_j=(\det(\nabla_{x_j} f'_j(x_j), f'_1(x_1),\ldots, f'_{g-1}(x_{g-1})))_j, \nonumber
\end{align}
where $f'_i:\BB_i\to\CC^n$ is given by $f'_i(x_i)=P'_i+x_iT'_i+c'_ix^2\tilde{R_i'}$.
\begin{lemma}\label{lma:F_with_errors} The Frobenius norm of the difference
${\cJ_F}\restriction_0-{\cJ_{F'}}\restriction_0$ is bounded by
\[\begin{aligned}
\gep_{\cJ_F}:=&(1+\verb|E|)^{g^3}\Big(2\sum_i\gep_{2, i}(1+\gep_{2,i})\prod_{k\neq i}(1+\gep_{0,k})\\
+&2\sum_i(|R'_i|+\gep_{2,i})((1+\gep_{2,i})\prod_{k\neq i}(1+\gep_{0,k})-1)
+\sum_{i,j}\big((1+\gep_{1,i})(1+\gep_{1,j})\prod_{k\neq i,j}(1+\gep_{0,k})-1\big)\Big).
\end{aligned}\]
\end{lemma}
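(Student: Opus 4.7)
The plan is to bound each entry of $\cJ_F|_0 - \cJ_{F'}|_0$ separately by means of multilinearity of the determinant together with Hadamard's row-norm estimate, then sum the absolute values of entries (which dominates the Frobenius norm), and finally multiply by a floating-point propagation factor $(1+\verb|E|)^{g^3}$ to account for the fact that the computed matrix $\cJ_{F'}|_0$ is obtained by finite-precision arithmetic.

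By \Cref{lma:F}, the diagonal entries of $\cJ_F|_0$ equal $2c_i\det(\tilde R_i, P_1,\ldots, P_{g-1}) = 2\det(R_i, P_1,\ldots,P_{g-1})$, and the off-diagonal entry at $(i,j)$ with $i\neq j$ equals $\det(T_i, P_1,\ldots, T_j,\ldots, P_{g-1})$ with $T_j$ at position $j$; the computed version substitutes the primed data. For a diagonal entry, writing $R_i = R'_i + \gd_{R,i}$ and $P_k = P'_k + \gd_{P,k}$ with $|\gd_{R,i}|<\gep_{2,i}$ and $|\gd_{P,k}|<\gep_{0,k}$, and expanding the difference of determinants into the $2^g-1$ non-empty multilinear summands, Hadamard's inequality bounds each summand by the product of its row norms; collecting these into a telescoping product yields a bound of the form $(|R'_i|+\gep_{2,i})\prod_k(1+\gep_{0,k}) - |R'_i|$. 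Regrouping this into the two diagonal sums in the statement amounts to absorbing one of the $(1+\gep_{0,k})$ factors into an enhanced $(1+\gep_{2,i})$ factor, which is natural because the system~\eqref{e:rqr} couples $R'_i$ to $P'_i$ (so the $\gep_{2,i}$ bound from \Cref{prp:rcont} already subsumes the effect of $|P_i-P'_i|$ on that row). Analogously, the off-diagonal expansion uses only unit-norm rows $T_i, T_j$ and $P_k$ for the $P$-positions, with respective perturbations $\gep_{1,i},\gep_{1,j},\gep_{0,k}$, and Hadamard yields $(1+\gep_{1,i})(1+\gep_{1,j})\prod_{k\neq i,j}(1+\gep_{0,k}) - 1$.

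Summing the entrywise bounds (two types per diagonal entry, one type per off-diagonal entry) reproduces the three sums inside the big parentheses, and the overall factor $(1+\verb|E|)^{g^3}$ arises from the standard accumulated-error estimate $|\mathrm{fl}(a\circ b)-a\circ b|\le\verb|E||a\circ b|$ applied across the $O(g^3)$ arithmetic operations used to evaluate each of the $(g-1)^2$ determinants in floating point; we absorb all such contributions into a single conservative exponent. The main obstacle I anticipate is bookkeeping: converting the naive Hadamard product over all $k$ into the stated form with one factor replaced by $(1+\gep_{2,i})$ and the product ranging over $k\neq i$ requires combining the bounds from \Cref{prp:tcont} and \Cref{prp:rcont} to absorb the $P_i$-dependence of $R'_i$, and one must verify that the resulting algebraic rearrangement still upper bounds the original Hadamard sum --- a manipulation that is routine but easy to mishandle.
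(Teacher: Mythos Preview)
Your overall strategy---multilinearity of the determinant, Hadamard row-norm bounds entrywise, summing the entries, and a global $(1+\texttt{E})^{g^3}$ floating-point factor---is exactly the paper's approach.

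The one point where you diverge is the treatment of the diagonal entries. The paper does \emph{not} work with $R_i$ directly and then ``regroup''; it first uses the normalization $R_i=c_i\tilde R_i$ with $|\tilde R_i|=1$, and writes the difference of diagonal entries as
\[
2(c_i+\gd_{c_i})\det(\tilde R_i+\gd_{R_i},P_1+\gd_{P_1},\ldots)-2c_i\det(\tilde R_i,P_1,\ldots)
=2\gd_{c_i}\det(\text{primed rows})+2c_i\sum_{\#I>0}\det(X_I),
\]
where $X_I$ has row $\gd$-versions exactly in the positions indexed by $I$. Hadamard applied to each summand then produces the first two sums in the statement \emph{without any algebraic rearrangement}: the factor $(1+\gep_{2,i})$ arises because the $\tilde R_i$-row (norm~$1$) is perturbed by $\gd_{R_i}$, and $|\gd_{c_i}|\le\gep_{2,i}$ gives the outer coefficient.

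Your proposed justification for the regrouping---that ``$\gep_{2,i}$ already subsumes the effect of $|P_i-P'_i|$ via \eqref{e:rqr}''---is not the mechanism at work. The coupling through \Cref{prp:rcont} explains why $\gep_{2,i}$ depends on $\gep_{0,i}$, but it does not convert a $(1+\gep_{0,i})$ Hadamard factor into a $(1+\gep_{2,i})$ one; that substitution is a consequence of the $c_i\tilde R_i$ split, nothing more. If you insist on working with the unnormalized $R_i$, you will obtain a bound of the shape $2[(|R'_i|+\gep_{2,i})\prod_k(1+\gep_{0,k})-(|R'_i|-\gep_{2,i})]$, which is a valid upper bound but does not algebraically coincide with the two stated sums; matching the statement's exact form requires the decomposition the paper uses.
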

\begin{proof}
The norm estimation is based on the computations in the proof of \Cref{lma:F}. We denote
\[
\gd_{P_i}:=P'_i-P_i, \gd_{T_i}:={T'}_i-T_i, \gd_{R_i}:={\tilde{R}}'_i-\tilde{R}_i,\gd_{c_i}:=c'_i-c_i.
\]
We also set $X_I$ to be the matrix whose first row is $\gd_{R_i}$ if $i\in I$, and $R_i$ otherwise, and whose $k>1$ row is $\gd_{P_k}$ if $k\in I$ and $P_i$ otherwise. The differences between the $g$ diagonal elements in \Cref{lma:F} can be written as
\[\begin{aligned}
&2(c_i + \gd_{c_i})\det(\tilde{R}_i+\gd_{R_i},P_1+\gd_{P_1},\ldots, P_n+\gd_{P_n})-2c_i\det(\tilde{R}_i,P_1,\ldots, P_n)\\
&=2\gd_{c_i}\det(\tilde{R}_i+\gd_{R_i},P_1+\gd_{P_1},\ldots, P_n+\gd_{P_n})+2c_i\sum_{\substack{I\subset \{1,\dots, n+1\} \\ \#I > 1}}\det(X_I).
\end{aligned}\]
The norms of these summands are bounded by the following quantities respectively:
\[
2|\gd_{c_i}|(1+|\gd_{c_i}|)\prod_{i\neq k}(1+\gep_{0,k}),\qquad2|c_i|((1+|\gd_{c_i}|)\prod_{i\neq k}(1+\gep_{0,k})-1).
\]
Likewise, we now compute the differences between the non-diagonal elements in \Cref{lma:F}, of which there are $g(g-1)$, are given by
\[
\det(T'_i, P'_1,\ldots , P'_{j-1},T'_j,P'_{j+1},\ldots , P'_n)-\det(T_i, P_1,\ldots , P_{j-1},T_j,P_{j+1},\ldots , P_n).
\]
A  similar argument for the diagonal elements gives us the bound
\[
(1+\gep_{1,i})(1+\gep_{1,j})\prod_{k\neq i,j}(1+\gep_{0,k}) - 1.
\]
To conclude the proof we observe that $|\gd_{c_i}|<\gep_{2,i}$, and that $|c_i|=|R_i|$, then we count the number of arithmetic operations involved in the computation.
\end{proof}
\begin{proposition}\label{prp:xx}
Suppose that $x$ and $x'$ are the solutions of $F(x)=0$ and $F'(x')=0$ of the smallest norms. For any $\gep_x$ which  satisfies
\begin{equation}\begin{aligned}\label{eq:gep_x}
\gep_x <&
\frac{\Tail_2(\sum_i\mu_i((|x'_i| + \gep_x)_i)\eta_i(|x'_i| +\gep_x))}{\sigma_{g-1}(\cJ_F'\restriction_0)-\gep_{\cJ_F}}+\frac{\sum_i3|c'_i|\cdot|x'_i|^2+2|c'_i|^2|x_i'|^3}{\sigma_{g-1}(\cJ_F'\restriction_0)}\\
+&\frac{\gep_{\cJ_F}}{\sigma_{g-1}(\cJ_{F'}\restriction_0)(\sigma_{g-1}(\cJ_{F'}\restriction_0)-\gep_{\cJ_F})}+\frac{|((T_j\wedge\bigwedge_iP_i)-(T'_j\wedge\bigwedge_iP'_i))_j|}{\sigma_{g-1}(\cJ_{F'}\restriction_0)-\gep_{\cJ_F}}
\end{aligned}\end{equation}
and $|x'_i|+\gep_x < r_{i,x}$, the magnitude $|x'-x|$ is bounded by $\gep_x$.
\end{proposition}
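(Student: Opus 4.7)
The plan is to apply an effective inverse function theorem to $F$ on the closed ball of radius $\gep_x$ around $x'$, in the spirit of \Cref{lma:F}, but now tracking numerical inaccuracies via \Cref{lma:F_with_errors}. Since $F'(x')=0$ by hypothesis, finding a root of $F$ within distance $\gep_x$ of $x'$ amounts to bounding $|F(x')|$ and controlling the inverse of $\cJ_F$ on this ball.

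The first step is to expand $F$ and $F'$ about the origin and subtract, yielding
\[
F(x') \;=\; F(x') - F'(x') \;=\; \bigl(F(0) - F'(0)\bigr) + \bigl(\cJ_F\restriction_0 - \cJ_{F'}\restriction_0\bigr)x' + \bigl(\Tail_2(F)(x') - \Tail_2(F')(x')\bigr).
\]
Using the determinantal identity $F_j(0) = \det(T_j,P_1,\ldots,P_{g-1}) = T_j\wedge\bigwedge_i P_i$, the constant difference equals the numerator of the fourth summand of \eqref{eq:gep_x}. \Cref{lma:F_with_errors} controls the middle term by $\gep_{\cJ_F}|x'|$. The tail difference splits into two pieces: $\Tail_2(F)$ is bounded throughout the ball by $\Tail_2(\sum_i \mu_i((|x'_i|+\gep_x)_i)\eta_i(|x'_i|+\gep_x))$ via the $\mu_i,\eta_i$ machinery of \Cref{lma:F} (the inflation by $\gep_x$ is needed because we require the bound over the whole ball, not just at $x'$), matching the first summand; while the pure-conic tail of $F'$ reduces by direct expansion of the determinant $F'_j$ to the explicit polynomial $\sum_i 3|c'_i||x'_i|^2 + 2|c'_i|^2|x'_i|^3$, since every quadratic or cubic term in the $x'_i$ variables of the determinantal expansion of $F'_j$ carries an explicit factor of $c'_i$; this matches the second summand.

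Second, to pass from the unknowable $\cJ_F\restriction_0^{-1}$ to the computable $\cJ_{F'}\restriction_0^{-1}$, I would use the Neumann-type identity
\[
\cJ_F\restriction_0^{-1} - \cJ_{F'}\restriction_0^{-1} \;=\; \cJ_F\restriction_0^{-1}\bigl(\cJ_{F'}\restriction_0 - \cJ_F\restriction_0\bigr)\cJ_{F'}\restriction_0^{-1},
\]
producing the denominator $\sigma_{g-1}(\cJ_{F'}\restriction_0)\bigl(\sigma_{g-1}(\cJ_{F'}\restriction_0) - \gep_{\cJ_F}\bigr)$ of the third summand of \eqref{eq:gep_x}, together with the uniform bound $\sigma_{g-1}(\cJ_F\restriction_0) \geq \sigma_{g-1}(\cJ_{F'}\restriction_0) - \gep_{\cJ_F}$ from \Cref{thm:sing_sum}, which appears as the shared denominator of the first and fourth summands.

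Combining these estimates, a Newton--Kantorovich-style contraction argument on the map $\Phi(y) = y - \cJ_F\restriction_0^{-1} F(y)$ over the closed ball of radius $\gep_x$ around $x'$ yields a fixed point $x$, i.e.\ a zero of $F$ with $|x - x'| < \gep_x$; the inequality \eqref{eq:gep_x} is precisely the condition that $\Phi$ maps this ball into itself and is contractive. The side condition $|x'_i| + \gep_x < r_{i,x}$ keeps the ball within the domain of \Cref{r_x}, so that \Cref{lma:F}'s tail machinery continues to apply, and the smallest-norm hypothesis on $x$ and $x'$ singles out the fixed point produced by the contraction. The main obstacle, as in \Cref{lma:F_with_errors}, is the bookkeeping: the four summands of \eqref{eq:gep_x} must be kept separate (each bounded using only computable quantities) so that the inequality is tight enough to be numerically verifiable for the specific curves treated in \Cref{sec:program}, rather than collapsed into a coarser but unusable single estimate.
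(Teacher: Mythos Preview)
Your argument is correct and matches the paper's proof in substance: the paper also splits the four summands of \eqref{eq:gep_x} into tail bounds for $F$ and $F'$ (first two terms, via \Cref{lma:F} and its analog for $F'$) and a linear comparison of $\cJ_F\restriction_0 x = -F(0)$ against $\cJ_{F'}\restriction_0 x' = -F'(0)$ (last two terms, via \Cref{prp:solcont}, whose proof is exactly the Neumann identity you write). Your Newton--Kantorovich framing is just a repackaging of the same ingredients; the paper states it more tersely by invoking \Cref{prp:solcont} directly rather than unwinding the contraction.
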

\begin{proof}
  The first two terms are bounds on tail components which come from \Cref{lma:F} for $F$ and $F'$ respectively (the computation for $F'$ is completely analog to that of
  $F$, so we do not redo it). The last two terms come from \Cref{prp:solcont} when comparing the solutions of
$F(0)=-\cJ_F\restriction_0\cdot x$, and $F'(0)=-\cJ_{F'}\restriction_0\cdot x'$.
\end{proof}
\begin{remark}\label{rmr:grad_descent}
We have several computational oriented remarks regarding \Cref{prp:xx}:
\begin{enumerate}
\item\label{i:third_term} Realistically (i.e., this is what happens in practice), the biggest terms by far in \Cref{eq:gep_x} are the last two.
\item As in the case of $r$ in \Cref{lma:close_to_P} and its utilization in the proof of \Cref{r_x}, the quantity $\gep_x$ appears in both sides of the inequality in \Cref{eq:gep_x}. On the RHS it appears as a higher degree term. For the same reason as in \Cref{lma:close_to_P}, the higher degree term is inconsequential.
\item\label{i:computing_x_prime} One computes $x'$ by solving the linear equation for $\cJ_{F'}$. In theory, this raises two issues:
\begin{enumerate}
\item There is an error in computing $\cJ_{F'}$ given $P'_i, T'_i, R'_i$. This error is identical to the error from $\gep_{\cJ_F}$ introduced in \Cref{lma:F_with_errors}, when one substitutes all the $\gep_{j,i}$ for \verb|E|.
\item\label{i:wedge_bound} One has to control the inequality relating
  $\cJ_{F'},x'$ analog to the one relating $\cJ_F, x$ in \Cref{lma:F}, to ensure
  its validity.
\end{enumerate}
However, in practice, we will see in \Cref{sec:program} that in our cases, the quantities appearing here are several orders of magnitude below the
quantities dominating the inequalities.
\item\label{i:grad_descent} Like any gradient descent method, the method at hand can serve as a basis for refining an existing solution, not merely bounding its accuracy. We believe this is a worthy effort, but it involves a lot of parameter tuning and interaction with the (implicit) gradient descent from \Cref{lma:close_to_P}. Thus, it lies beyond the scope of the current work. That being said, we did run one iteration with hand-tailored parameters for the descent. See \Cref{subsec:testing} \Cref{i:grad_decent_bound}.
\end{enumerate}
\end{remark}
\begin{theorem}[Bounding $\sqrt{\sum_i|P'_i-(\text{the closest point to }P'_i\text{ in }H\cap C)|^2}$]\label{thm:pp}
With $x$ and $x'$ as in \Cref{prp:xx}, we have
{\footnotesize
\[\begin{aligned}
\sum_i&|P'_i - \text{closest point to }P'_i\text{ in }H\cap C|^2 <  \sum_i|v_{i,x}(x_i)-f'_i(x'_i)|^2 + \sum_i|P'_i- f'_i(x'_i)|^2\\
<&\sum_i |(P_i-P'_i)+(x_i-x'_i)T_i+x'(T_i-T'_i)+(x_i^2-{x'_i}^2)R_i+{x'_i}^2(R_i-R'_i)+u_i(x)|^2+\sum_i|P'_i- f'_i(x'_i)|^2\\
<&\left(\gep_{0,i}+\gep_x+|x'|\gep_{1,i}+(2\gep_x(|x'|+\gep_x)|x'|+\gep_x^2)(|R'_i|+\gep_{2,i})+|x'|^2\gep_{2,i}+\kappa_{0,i}(|x_i'|+\gep_x)^3\right)^2+\left(|x'|+|R'_i|\cdot|x'_i|^2\right)^2.
\end{aligned}\]
}
\end{theorem}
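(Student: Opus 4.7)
The plan is to start from the observation that at a common zero $x$ of $F$ the point $v_{i,x}(x_i)$ lies on $C$ (by construction of $v_{i,x}$ as the nearest point of $C$ to $P_{i,x_i}$) and also on the hyperplane $H$ cut out by $\wedge_i v_{i,x_i}$, since the $j$-th coordinate of $F$ measures precisely the failure of $\nabla_{x_j}v_{j,x_j}$ to lie in that hyperplane. Thus $v_{i,x}(x_i)$ is a legitimate point of $H\cap C$, and the distance from $P'_i$ to the closest point of $H\cap C$ is bounded by $|P'_i-v_{i,x}(x_i)|$. Inserting the computed conic value $f'_i(x'_i)$ as an intermediate point and splitting by the triangle inequality yields the first line of the asserted inequality, leaving $\sum_i|v_{i,x}(x_i)-f'_i(x'_i)|^2$ and $\sum_i|P'_i-f'_i(x'_i)|^2$ to control.

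To bound $|v_{i,x}(x_i)-f'_i(x'_i)|$, I would expand the two expressions using $v_{i,x}(x_i)=P_i+x_iT_i+x_i^2R_i+u_i(x_i)$ together with $f'_i(x'_i)=P'_i+x'_iT'_i+{x'_i}^2R'_i$ (recalling that $c'_i\tilde{R}'_i=R'_i$), and telescope in the usual fashion:
\[
x_iT_i-x'_iT'_i=(x_i-x'_i)T_i+x'_i(T_i-T'_i), \qquad x_i^2R_i-{x'_i}^2R'_i=(x_i^2-{x'_i}^2)R_i+{x'_i}^2(R_i-R'_i).
\]
This reproduces exactly the middle line of the theorem statement. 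The componentwise bounds then come from previously established inputs: $|P_i-P'_i|<\gep_{0,i}$ by \Cref{dfn:stability_notations}, $|x_i-x'_i|\le|x-x'|<\gep_x$ by \Cref{prp:xx}, $|T_i-T'_i|<\gep_{1,i}$ by \Cref{prp:tcont}, $|R_i-R'_i|<\gep_{2,i}$ by \Cref{prp:rcont} (so also $|R_i|<|R'_i|+\gep_{2,i}$), and $|u_i(x_i)|<\kappa_{0,i}|x_i|^3\le\kappa_{0,i}(|x'_i|+\gep_x)^3$ by \Cref{r_x}. The one algebraic care point is the $R$-term, where $x_i^2-{x'_i}^2=(x_i-x'_i)(x_i+x'_i)$ combined with $|x_i|\le|x'_i|+\gep_x$ produces the mixed expression $(2\gep_x(|x'|+\gep_x)|x'|+\gep_x^2)(|R'_i|+\gep_{2,i})$ appearing in the final inequality.

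For the second piece, $|P'_i-f'_i(x'_i)|=|x'_iT'_i+{x'_i}^2R'_i|\le|x'_i|\cdot|T'_i|+|x'_i|^2|R'_i|$, and since $|T'_i|=1$ by the normalization built into \Cref{prp:tcont}, this is bounded by $|x'_i|+|R'_i|\cdot|x'_i|^2$, matching the last term in the theorem.

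There is no real obstacle in this argument: it is a triangle-inequality decomposition followed by the algebraic telescoping above and a collation of error bounds already established in \Cref{dfn:stability_notations}, \Cref{prp:tcont}, \Cref{prp:rcont}, \Cref{r_x}, and \Cref{prp:xx}. The main conceptual point to verify is that $v_{i,x}(x_i)$ does lie on $H$ for any genuine zero $x$ of $F$, which is immediate from the definition of $F$; everything after that is bookkeeping.
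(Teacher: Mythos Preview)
Your proposal is correct and follows exactly the paper's approach; the paper's own proof is the one-liner ``The first inequality is immediate from the definition, and then we simply expand,'' and you have spelled out precisely what that expansion is.

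Two small remarks. First, your justification that $v_{i,x}(x_i)\in H$ is slightly misaimed: the point lies on $H$ simply because $H$ is \emph{defined} as the hyperplane through the $v_{i,x_i}$ (the wedge $\bigwedge_i v_{i,x_i}$); the vanishing of $F$ is what guarantees \emph{tangency} of $H$ to $C$ at those points, i.e.\ that $H$ is the theta hyperplane. Second, the plain triangle inequality only gives $|a+b|^2\le 2(|a|^2+|b|^2)$, not $|a|^2+|b|^2$, so the first displayed inequality in the theorem statement (and hence your ``splitting by the triangle inequality'') is literally off by a factor of~$2$; this imprecision is already present in the theorem as stated and is immaterial for the numerical bounds actually used in \Cref{sec:program}.
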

\begin{proof}
The first inequality is immediate from the definition, and then we simply expand.
\end{proof}
When computing the quantity above, to bound the estimation error, one has to multiply it by $(1+$\verb|E|$)^N$ where $N$ is the number of machine operations involved in the computation.

Intuitively, in \Cref{lma:F}, we perturb points along $C$, which means that if the $H'\wedge T_{P_i}C$s are small, then the magnitude $|H-H'|$ should be much smaller than the distance bound from \Cref{thm:pp}. Below we make this intuitive view precise.
\begin{theorem}[Bounding $|H-H'|$]\label{thm:hh}
Let $n_{H'}$ be the unit normal to $H'$. Then under the assumption of \Cref{prp:xx}, the magnitude $|H-H'|$ is bounded by
\[
(|x'|+\gep_x)\Big(\sum_i|\langle T'_i,n_{H'}\rangle|+\gep_{1,i}\Big)+\Tail_2\Big( \prod_{i=1}^n(1+(|x_i'|+\gep_x)+|c_i'|(|x_i'|+\gep_x)^2+\kappa_{0,i}'(|x_i'|+\gep_x)^3)\Big).
\]
\end{theorem}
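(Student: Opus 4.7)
The plan is to realize both $H$ and $H'$ as Hodge duals of normalized wedge products of $g-1$ points in $\CC^g$: $H$ corresponds to $\bigwedge_{i=1}^{g-1} v_{i,x_i}$, which vanishes on the true tangency points of $H\cap C$, while $H'$ corresponds to $\bigwedge_{i=1}^{g-1} P'_i$. After normalization, the quantity $|H-H'|$ is controlled, to leading order, by the component along $n_{H'}$ of the perturbation $\bigwedge_i v_{i,x_i} - \bigwedge_i P'_i$; that is the quantity I would estimate.

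I would then expand this wedge difference by multilinearity, using the decomposition $v_{i,x_i} = P'_i + (P_i - P'_i) + x_i T_i + c_i x_i^2 \tilde{R}_i + u_i(x_i)$. The constant piece $\bigwedge_i P'_i$ cancels under normalization. The linear-in-$x_j$ contribution is $\sum_j x_j\, T_j \wedge \bigwedge_{i\neq j} P'_i$; the key observation is that $T_j \wedge \bigwedge_{i\neq j} P'_i$ differs from a scalar multiple of $\bigwedge_i P'_i$ only through its $n_{H'}$-component, which is proportional to $\langle T_j, n_{H'}\rangle$, with proportionality constant $|\bigwedge_i P'_i|$ cancelling upon normalization. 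Writing $T_j = T'_j + (T_j - T'_j)$ with $|T_j - T'_j| \le \gep_{1,j}$ by \Cref{prp:tcont}, and bounding $|x_j| \le |x'_j| + \gep_x \le |x'| + \gep_x$ by \Cref{prp:xx}, yields the first summand of the claim.

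The remaining contributions are of multilinear degree $\ge 2$ in the displacements $v_{i,x_i} - P'_i$. This includes all the $P_i - P'_i$ corrections, the quadratic $c_i x_i^2 \tilde{R}_i$ terms, the higher-order $u_i(x_i)$ remainders, and any combination of two or more tangent-direction replacements across distinct slots. Using the elementary bound $|v_{i,x_i}| \le 1 + |x_i| + |c_i| |x_i|^2 + \kappa_{0,i} |x_i|^3$, which follows from the definition of $v_{i,x_i}$ together with the $u_i$ estimate of \Cref{r_x}, and then substituting $|x_i| \le |x'_i| + \gep_x$ along with $|c_i| \le |c'_i| + \gep_{2,i}$ (from \Cref{prp:rcont}) and $\kappa_{0,i} \le \kappa'_{0,i}$, the multilinear expansion is bounded in norm by $\prod_i (1 + (|x'_i| + \gep_x) + |c'_i|(|x'_i| + \gep_x)^2 + \kappa'_{0,i}(|x'_i| + \gep_x)^3)$. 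Discarding the degree $0$ and degree $1$ parts already accounted for leaves precisely the $\Tail_2$ summand of the claimed bound.

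The hard part will be the careful bookkeeping of which correction lives in which bucket: the $\gep_{0,i}$ errors from $|P_i - P'_i|$ and the $\gep_{2,i}$ errors from $|R_i - R'_i|$ need to be absorbed into the $\Tail_2$ factor without inflating constants, and one must verify that the Hodge-star projection identity yielding $\langle T'_j, n_{H'}\rangle$ fully captures the transverse linear contribution once the normalization correction $|\bigwedge_i v_{i,x_i}|/|\bigwedge_i P'_i| = 1 + O(|x'| + \gep_x)$ is taken into account, so that the cross-terms do not secretly produce additional first-order transverse contributions beyond those already recorded.
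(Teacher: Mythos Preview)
Your overall strategy matches the paper's: write both $H$ and $H'$ as normalized wedges, expand $\bigwedge_i v_{i,x_i}-\bigwedge_i(\text{base point})$ by multilinearity, control the linear-in-$x$ contribution by the normal component $\langle T_j,n_{H'}\rangle$, and bound everything else by the $\Tail_2$ of the product $\prod_i(1+\cdots)$.

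The one real discrepancy is your choice of base point. The paper expands around the \emph{exact} intersection points $P_i\in H'\cap C$, for which $H'$ is literally the normalization of $\bigwedge_i P_i$ (cf.\ the last line of the proof of \Cref{lma:F}). So the paper's decomposition is simply $v_{i,x_i}=P_i+x_iT_i+c_ix_i^2\tilde R_i+u_i(x_i)$ with no $(P_i-P'_i)$ term at all. You instead expand around the computed $P'_i$ and carry an extra $(P_i-P'_i)$ piece. That piece is a \emph{degree-$1$} contribution in the multilinear expansion --- it arises from replacing a single slot by $(P_j-P'_j)$ --- and its component along $n_{H'}$ is not automatically zero, since the $P'_j$ lie only approximately on $H'$. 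Your sentence ``this includes all the $P_i-P'_i$ corrections'' placing these into the degree-$\ge 2$ bucket is therefore not correct, and indeed the product inside the stated $\Tail_2$ has no $\gep_{0,i}$ in it.

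The fix is immediate: take $P_i$ as the base point. Then $H'=\bigwedge_iP_i$ on the nose, the $(P_i-P'_i)$ issue disappears, and the primed quantities re-enter only at the last step via $|\langle T_i,n_{H'}\rangle|\le|\langle T'_i,n_{H'}\rangle|+\gep_{1,i}$ and $|x_i|\le|x'_i|+\gep_x$ --- exactly as the paper does. The paper also uses the built-in orthogonality $T_i\perp P_i$ from \Cref{dfn:M_TandM_R} to argue that the $P_i$-coefficient of $T_i$ in the basis $\{P_1,\ldots,P_n,n_{H'}\}$ vanishes, hence $|T_i\wedge\bigwedge_{k\neq i}P_k|\le|\langle T_i,n_{H'}\rangle|$; your ``Hodge-star projection'' phrasing is reaching for the same inequality and would benefit from invoking that orthogonality explicitly.
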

\begin{proof}
Since $H'=\wedge_{i=1}^{n}P_i$, the points $P_1,\ldots, P_{n+1}$ together $n_{H'}$ form a basis to $\CC^{n+1}$. Moreover, since $P_i\perp T_i$,  the coefficient of $P_i$ in the representation of $T_i$ in this basis is trivial, that is to say $|T_i\wedge\bigwedge_{k\neq i}P_i|< |\langle T_i,n_{H'}\rangle|$.
Hence:
\[\begin{aligned}
&|H-H'|=\wedge_i (P_i+x_iT_+x_i^2c_iR_i+u_i(x_i))-\wedge_i P_i|
\\ &< \sum_i |x_i| |\langle T_i,n_{H'}\rangle| + \Tail_2\Big( \prod_{i=1}^n(1+|x_i|+|c_i|\cdot|x_i|+\kappa_{0,i}|x_i|)\Big)\\
&< (|x'|+\gep_x)\Big(\sum_i|\langle T'_i,n_{H'}\rangle|+\gep_{1,i}\Big)+\Tail_2\Big( \prod_{i=1}^n(1+(|x_i'|+\gep_x)+|c_i'|(|x_i'|+\gep_x)^2+\kappa_{0,i}'(|x_i'|+\gep_x)^3)\Big).
\end{aligned}
\]
\end{proof}
We close this section with the following important remark.
\begin{proposition}
Assuming that $\dim\mathrm{pseudo}\ker M_{T_i'}<2$ for all the points $P'_i$ of a certified hyperplane, the corresponding theta characteristic $\theta$ is odd i.e., $\dim H^0(C,\gth)=1$, where $M_{T'}$ is the computed version of the matrix $M_T$ defined in \Cref{dfn:M_TandM_R}.
\end{proposition}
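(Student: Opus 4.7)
The plan is to proceed in three steps: (i) upgrade the numerical bound on the pseudo-kernel to an exact statement on $\dim \ker M_{T_i}$, (ii) extract the geometric structure of the tangency configuration of $H$ with $C$, and (iii) apply the geometric Riemann-Roch theorem to conclude $\dim H^0(C,\theta)=1$.

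First I would invoke the singular value perturbation machinery already used in \Cref{prp:tcont} to translate the hypothesis $\dim\mathrm{pseudo}\ker M_{T_i'}<2$ into an exact bound $\dim \ker M_{T_i}\le 1$ on the true matrix. Since the tangent vector $T_i$ at any smooth point $P_i$ of $C$ always lies in $\ker M_{T_i}$ (this is exactly the purpose of $M_T$ in \Cref{dfn:M_TandM_R}), we obtain the equality $\dim \ker M_{T_i}=1$, so each $P_i$ is a smooth point of $C$ at which the tangent direction is pinned down uniquely, and the germ of $C$ at $P_i$ coincides with the germ of the base locus of the ideal of quadrics $I_2(C)$. Combining this with the hyperplane certification in \Cref{thm:pp} and \Cref{thm:hh}, the intersection cycle $H\cdot C$ decomposes on $C$ as $2\sum_{i=1}^{g-1} P_i$, so $D:=\sum_i P_i$ is a reduced effective divisor of degree $g-1$ with $2D\sim K_C$, defining a theta characteristic $\theta:=[D]$.

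The crux is ruling out $h^0(\theta)\ge 2$. Suppose for contradiction that $h^0(\theta)\ge 2$. By geometric Riemann-Roch applied to the length-$2(g-1)$ scheme $2D\sim K_C$, the full tangent configuration $\{P_i,\, T_{P_i}C\}_i$ would fail to span the hyperplane $H\subset |K_C|^*\cong \PP^{g-1}$, giving rise to a positive-dimensional family of hyperplanes through this configuration, each of which would be tangent to $C$ at the $P_i$. This contradicts the isolation of $H$ as the unique zero of the function $F$ in \Cref{lma:F}, established via the invertibility of the Jacobian at the solution in \Cref{prp:xx}. Hence $h^0(\theta)=1$, which is odd, so $\theta$ is an odd theta characteristic.

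The main obstacle I anticipate is the geometric step: showing that when $h^0(\theta)\ge 2$, the tangent lines $T_{P_i}C$ themselves, not just the reduced points $P_i$, are constrained enough to force a pencil of candidate hyperplanes. One handles this by applying geometric R-R to the length-$2$ embedded structure at each $P_i$ rather than to the reduced divisor $D$ alone, a standard move in the theory of theta characteristics (cf.\ \cite[\S5.4]{Dol}).
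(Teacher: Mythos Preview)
The paper states this proposition without proof, so there is no ``paper's approach'' to compare against; your attempt is the only argument on the table, and it has a real gap in step~(iii).

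Your geometric Riemann--Roch computation is applied to the wrong divisor. For the length-$2(g-1)$ scheme $2D\sim K_C$ one has
\[
\dim \overline{2D} \;=\; \deg(2D)-h^0(2D)\;=\;(2g-2)-g\;=\;g-2,
\]
so the tangent configuration $\{P_i,T_{P_i}C\}_i$ \emph{always} spans the hyperplane $H$, regardless of $h^0(\theta)$. There is thus never a positive-dimensional family of hyperplanes through that configuration, and your contradiction never fires. (Even if there were such a family, it would not produce a family of zeros of $F$: the domain of $F$ is the space of perturbations of the \emph{points} along $C$, not the space of hyperplanes through a fixed configuration.)

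The fix is to apply geometric Riemann--Roch to the reduced divisor $D$ instead. If $h^0(\theta)\ge 2$ then $\dim\overline{D}=(g-1)-h^0(\theta)\le g-3$, so the affine representatives $P_1,\ldots,P_{g-1}\in\CC^g$ are linearly dependent. Now take a nontrivial pencil $D_t=\sum_iP_i(t)$ inside $|\theta|$ with $D_0=D$. For every $t$ the points $P_i(t)$ remain linearly dependent (since $h^0(D_t)=h^0(\theta)\ge 2$), hence each determinant $\det(T_{P_j(t)},P_1(t),\ldots,P_{g-1}(t))$ vanishes and $F(x(t))\equiv 0$. Differentiating at $t=0$ gives a nonzero vector $\dot{x}(0)\in\ker\cJ_F\restriction_0$, so $\cJ_F\restriction_0$ is singular. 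This contradicts the certification, which (via \Cref{lma:F} and \Cref{prp:xx}) requires $\sigma_{g-1}(\cJ_F')$ to be bounded away from zero. With this correction your outline goes through.
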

%
\section{Numerically certifying Steiner sets given certified theta hyperplanes}\label{sec:certificationSteiner}
%
In this section we assume we are provided with {\em certified} approximate theta
hyperplanes, which come with putative {\em non-certified} level 2 structure in
the form of half-characteristics; i.e. picking an even two torsion point in
the Jacobian we identify $\mathrm{Pic}^{g-1}$ with $\mathrm{Pic}^0$. Thus,
considering these theta hyperplanes as points in an odd affine $SP_2(2g)$
homogeneous space, we give them linear homogeneous space coordinates by picking
a distinguished $0$ among the points in the even affine space.

Recalling the definition of a Steiner set from \Cref{def:SteinerSet}, we
also recall that a pair of pairs of distinct odd theta characteristic
$\{\{\theta_1,\theta_2\},\{\theta_3, \theta_4\}\}$ is called syzygetic (resp.
azygetic) if $\{\theta_1,\theta_2\},\{\theta_3, \theta_4\}$ are (resp. are not)
in the same Steiner set which happens if and only if
$\gth_1+\gth_2+\gth_3 + \gth_4=2K_C$ (resp. $\neq 2K_C$).
The syzygetic and azygetic quadruples (and thus the Steiner sets) can be built
by means of the half-characteristics coordinates above.

Our approach in certifying the Steiner sets is a combined algebraic combinatorial
approach where we do not attempt to directly certify syzygetic quadruples at all:
\begin{proposition}\label{prp:partition}
Let $S,\ol{S}$ be two subsets of the set of unordered pairs of odd theta characteristics of $C$ such that $\#\ol{S}=2^{2g}-2
$ and any pair of elements in either $\binom{\ol{S}}{2}$ or $\ol{S}\times S$ forms an azygetic quadruple then $S$ is a constituent of a Steiner set.
\end{proposition}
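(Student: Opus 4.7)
The plan is to exploit the canonical bijection between unordered pairs of distinct odd theta characteristics and their associated non-zero $2$-torsion classes. Concretely, an unordered pair $\{\theta_1,\theta_2\}$ determines a unique element $\alpha := \theta_1 - \theta_2 \in \Jac(C)[2]\smallsetminus\{0\}$ (well-defined because $\alpha = -\alpha$), and by definition the pair belongs to the Steiner set $\Sigma_\alpha$. The equivalence recalled just before the statement (``syzygetic iff $\theta_1+\theta_2+\theta_3+\theta_4 = 2K_C$'') then says exactly that two pairs are syzygetic iff they are assigned the same $2$-torsion class, and azygetic iff they are assigned distinct classes. So the assignment $\{\theta_1,\theta_2\}\mapsto \alpha$ is simply the Steiner-set labelling, and the proposition becomes a pigeonhole argument about classes in $\Jac(C)[2]\smallsetminus\{0\}$.

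First, I would observe that since every two elements of $\binom{\ol{S}}{2}$ form an azygetic quadruple, the assignment $\ol{S}\to \Jac(C)[2]\smallsetminus\{0\}$ is injective. As $\#\ol{S} = 2^{2g}-2$ and $\#(\Jac(C)[2]\smallsetminus\{0\}) = 2^{2g}-1$, exactly one class $\alpha_0 \in \Jac(C)[2]\smallsetminus\{0\}$ is not realized by any pair of $\ol{S}$.

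Next, take any $p \in S$. By hypothesis, $p$ is azygetic to every element of $\ol{S}$, so the $2$-torsion class assigned to $p$ differs from each of the $2^{2g}-2$ classes realized in $\ol{S}$. The only remaining option is $\alpha_0$, so $p \in \Sigma_{\alpha_0}$. Since this holds for every $p \in S$, we get $S \subseteq \Sigma_{\alpha_0}$, i.e.\ $S$ is a constituent of a single Steiner set.

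There is no real obstacle here: the argument is entirely a counting/pigeonhole step once one notices that ``azygetic'' is equivalent to ``distinct Steiner label''. The only subtle point worth spelling out in the write-up is why the Steiner labels attached to pairs in $\ol{S}$ are literally distinct elements of $\Jac(C)[2]\smallsetminus\{0\}$ (rather than just pairwise distinct in some weaker sense), which follows directly from the identity $\theta_1+\theta_2+\theta_3+\theta_4 = 2K_C \Leftrightarrow (\theta_1-\theta_2)=(\theta_3-\theta_4)$ in $\Jac(C)[2]$.
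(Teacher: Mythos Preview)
Your argument is correct and is exactly the paper's approach: the paper's one-line proof says that the elements of $\ol{S}$ occupy $2^{2g}-2$ distinct Steiner sets, leaving a single Steiner set which every element of $S$ must fall into. You have simply made explicit the bijection between Steiner labels and nonzero $2$-torsion classes and the pigeonhole step, which is precisely what the paper is invoking.
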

\begin{proof}
Each element of $\ol{S}$ is in a different Steiner set, and each element of $S$ is in a Steiner set where none of the elements of $\ol{S}$ are, but there is only one Steiner set left.
\end{proof}
This approach enables to certify azygetic quadruples, which is done by bounding
quantities {\em away} from $0$. We start by setting some definitions:
\begin{definition}\label{dfn:M_D}
  Fixing a positive integer $k$, let $D$ be an effective divisor supported on
  distinct points. Assume that the points in the support of $D\in(|K_C|^*)^{\deg D}$ are ordered, and their representatives in $H^0(K_C)^*$ are of norm $1$. Let $M_D$ be the $\deg D \times \dim\Sym^k H^0(K_C)$
complex matrix whose entry at the row $r$, the column $c$ is the evaluation of the $r$th point of $D$ at the $c$th degree $k$ monomial on $H^0(K_C)$ in lexicographic order.
\end{definition}
Note that the lower $\dim I_k(C)$ singular values of $M_D$ are trivial.  On the other hand, the complex unitary norm on $\Sym^k H^0(K_C)$ under the monomial basis induces an isomorphism $I_k(C)^\perp\cong H^0(k K_C)$. Moreover, this norm induces the Fubini-Study metric on $|k K_C|$. In what follows, we establish a ``distance-like'' (to an extent) notion between elements of $\Sym^kH^0(K_C)$ and
divisors.
\begin{definition}
Given a polynomial $p\in \Sym^kH^0(K_C)$ of norm $1$, we define the \emph{gap} between the divisor $D$ and the polynomial $p$ as:
\begin{equation}\label{eq:distance}
    \fg(D, p):=|M_D\cdot V_p|=\sqrt{\sum_{\substack{{x\in D} \\ {|x| =1}}} |p(x)|^2},
\end{equation}
where $V_p$ is the vector whose entries are the coefficients of $p$.
\end{definition}
\begin{remark}
The gap between the divisor $D$ in ${|K|^*}^{2k(g-1)}$ and $p$ is bounded from below by the maximum of the Fubini-Study distances between the points of $Z(p)\cap C$ and $D$, and from above by the sum of these distances.
\end{remark}
\begin{proof}
This follows from the definition of the gap $\fg$.
\end{proof}
The most interesting case is $k=2,\deg D_i=g-1$. From here on we will
concentrate on this case unless otherwise stated, and abuse the notation and
write $M_{ij}:=M_{D_i + D_j}$.
\begin{definition}\label{dfn:approxm_syz}
 We say that $D_i, D_j$ are $\gep$-approximately syzygetic (resp. $A$-numerically azygetic) if the $(\dim I_2(C)+1)$-th lower singular value of $M_{ij}$ is $<\gep$ (resp. $>A$).
Let  $D_1,D_2, D_3, D_4$ be effective divisors of degree $2g-2$, supported on distinct points. Assume that the pairs $(D_1, D_2)$, $(D_3, D_4)$ are $\gep$-approximately syzygetic, and the pair $(D_1, D_3)$ is syzygetic.

Let $Q_{13}$ to be a norm 1 vector in the 1-dimensional vector space $\rmspan({\ol{Q}_{13}}^\perp, I_2(C))^\perp$, where orthogonality is in the monomial basis for $\Sym^2H^0(K_C)$. Similarly we take $H^0(2K_C)$ to be the unitary orthogonal complement of $I_2(C)$ in $\Sym^2H^0(K)$, and set
$V:=\rmspan(H^0(2K)\otimes[Q_{13}], I_2(C))^\perp$, where orthogonality is in the monomial basis on $\Sym^4H^0(K_C)$. This is a $\dim H^0(4K_C)-\dim H^0(2K_C)=4(g-1)$ dimensional subspace of $\Sym^4H^0(K)$. Evaluating (a unitary orthogonal
basis of) these quartics on the $4(g-1)$ points of $D_1+D_3$ we get a square $4(g-1)\times 4(g-1)$ matrix. We denote its lowest singular value by $B$.
\end{definition}
\begin{proposition}\label{prp:weaktransitivity}
With the settings above, $(D_2, D_4)$ is
\[
\frac{\sqrt{g-1}(2+2\sqrt{2})}{\sqrt{2}-\sqrt{4g-4}\max(|M_{12}|,|M_{34}|)\frac{\gep}{B}}\max(|M_{12}|,|M_{34}|)\gep
\]
approximately syzygetic.
\end{proposition}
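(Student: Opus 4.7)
The plan is to construct an approximate syzygetic quadric $Q_{24}$ for the pair $(D_2,D_4)$ by combining $Q_{12},Q_{34},Q_{13}$ through the multiplicative structure of $\mathrm{Sym}^\bullet H^0(K_C)$, and then use the hypothesis on $B$ to control the quality of the construction. First I would form the quartic $R:=Q_{12}\cdot Q_{34}\in\Sym^4 H^0(K_C)$. For each point $x$ of one of the four divisors, one of the factors is ``certified small'' because $x$ lies in the relevant $D_i+D_j$, while the other factor is bounded by the operator norm of the corresponding submatrix. Concretely, using a Cauchy--Schwarz split of the form $\sum_{x\in D_1}|Q_{12}(x)Q_{34}(x)|^2 \le (\max_{x\in D_1}|Q_{34}(x)|^2)\cdot \sum_{x\in D_1}|Q_{12}(x)|^2$ and the submatrix bound $|M_{D_i}^{(2)}|\le |M_{ij}|$, I expect to get $\sqrt{\sum_{x\in D_1+D_3}|R(x)|^2}\le \sqrt2\,\max(|M_{12}|,|M_{34}|)\,\gep$, and the analogous bound on $D_2+D_4$. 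This accounts for the factors $\sqrt2$ and $\max(|M_{12}|,|M_{34}|)\,\gep$ in the statement.

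Next I would perform an orthogonal decomposition $R=R_\parallel+R_\perp$ in $\Sym^4 H^0(K_C)$ with $R_\parallel\in\rmspan(H^0(2K_C)\otimes[Q_{13}],\,I_4(C))$ and $R_\perp\in V$. Both summands comprising $R_\parallel$ vanish on $D_1+D_3$ (the $I_4(C)$ piece on all of $C$, and the $H^0(2K_C)\otimes[Q_{13}]$ piece because $Q_{13}$ cuts out $D_1+D_3$), so $R_\perp|_{D_1+D_3}=R|_{D_1+D_3}$. Invoking the definition of $B$ as the lowest singular value of evaluation of $V$ on $D_1+D_3$, I obtain $|V_{R_\perp}|\le |R|_{D_1+D_3}|/B \le \sqrt 2\,\max(|M_{12}|,|M_{34}|)\gep/B$. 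Since $H^0(2K_C)\cdot Q_{13}\cap I_4(C)=\{0\}$ (a section of $4K_C$ divisible by $Q_{13}|_C$ that also vanishes on $C$ would force its quadratic cofactor into $I_2(C)\cap H^0(2K_C)=\{0\}$), the decomposition $R_\parallel=Q_{13}\cdot H_1+J$ with $H_1\in H^0(2K_C)\subset\Sym^2 H^0(K_C)$ and $J\in I_4(C)$ is unique. The candidate is then $Q_{24}:=H_1/|V_{H_1}|$.

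To certify $(D_2,D_4)$ I would estimate $|M_{24}V_{Q_{24}}|=|M_{24}V_{H_1}|/|V_{H_1}|$ using the key identity $(Q_{13}H_1)|_{D_2+D_4}=(R-R_\perp)|_{D_2+D_4}$, whose right-hand side is bounded by $\sqrt 2\max(|M_{12}|,|M_{34}|)\gep + \sqrt{4g-4}\,|V_{R_\perp}|\le \sqrt2\max(|M_{12}|,|M_{34}|)\gep\bigl(1+\sqrt{4g-4}/B\bigr)$. The sought bound on the pointwise values of $H_1$ on $D_2+D_4$ must be extracted from this bound on the pointwise values of $Q_{13}\cdot H_1$ there; this is done by inverting the diagonal scaling by $Q_{13}(x)$ on the subspace $Q_{13}\cdot H^0(2K_C)\subset H^0(4K_C)$, where the control constant is again supplied by $B$ together with the norm of $M_{24}$ restricted to a $g-1$ dimensional complement (accounting for the $\sqrt{g-1}$ and the extra $(2+2\sqrt2)$ numerical factor coming from triangle inequalities on $R$, $R_\perp$ and on the two divisors).

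The main obstacle I expect is precisely this last step: translating the ``quartic is small on $D_2+D_4$'' estimate into a ``quadric is small on $D_2+D_4$'' estimate. Naively this requires a lower bound on $\min_{x\in D_2+D_4}|Q_{13}(x)|$, which is not given; the trick, as in \Cref{prp:xx} and \Cref{lma:F}, is to close the bound self-consistently. One sets up the inequality with the unknown $\sigma_{\dim I_2(C)+1}(M_{24})=\gamma$ appearing on both sides, absorbed as a backreaction term $\sqrt{4g-4}\,\max(|M_{12}|,|M_{34}|)\gep/B$ inside the denominator; solving the resulting implicit inequality for $\gamma$ produces the shape $\tfrac{\sqrt{g-1}(2+2\sqrt2)\max(|M_{12}|,|M_{34}|)\gep}{\sqrt2-\sqrt{4g-4}\max(|M_{12}|,|M_{34}|)\gep/B}$ stated in the proposition, and becomes vacuous precisely when $\sqrt{4g-4}\max(|M_{12}|,|M_{34}|)\gep/B\ge\sqrt2$.
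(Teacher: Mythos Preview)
Your overall plan matches the paper's proof closely: form the quartic $Q'_{12}Q'_{34}$, bound its gap to $D_1+D_3$ and to $D_1+D_2+D_3+D_4$, use the singular value $B$ to control the $V$-component (your $R_\perp$, the paper's $R$), and write what remains as $Q\cdot Q_{13}$ modulo $I_4(C)$, with $Q$ the candidate witness quadric. The intermediate estimates you sketch are essentially the paper's.

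Where you go wrong is the last paragraph. The paper does \emph{not} run a self-consistent implicit inequality in $\gamma$, and the term $\sqrt{4g-4}\max(|M_{12}|,|M_{34}|)\gep/B$ in the denominator is not a ``backreaction'' of $\gamma$ at all --- it is simply the bound $\gep_R$ on $|R_\perp|$, obtained as $\fg(D_1+D_3,Q'_{12}Q'_{34})/B$. There is no mechanism by which $\sigma_{\dim I_2(C)+1}(M_{24})$ could appear on the right-hand side of your estimates, so the ``solving the resulting implicit inequality'' step has nothing to solve. What the paper actually does for the denominator is a pure norm argument on products of forms: from $|Q'_{12}|=|Q'_{34}|=1$ one gets $1\geq|Q'_{12}Q'_{34}|\geq 1/\sqrt{2}$ in the monomial basis, hence $|QQ_{13}|\geq 1/\sqrt{2}-\gep_R$, and since $|Q_{13}|=1$ this forces $|Q|\geq 1/\sqrt{2}-\gep_R$. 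The displayed bound in the proposition is then literally
\[
\frac{\text{bound on }\fg(D_1+D_2+D_3+D_4,\,QQ_{13})}{\text{lower bound on }|Q|}
\;=\;\frac{\sqrt{g-1}(2+2\sqrt{2})\max(|M_{ij}|)\gep}{1/\sqrt{2}-\gep_R}.
\]
In particular the paper never inverts the diagonal scaling by $Q_{13}(x)$ on $D_2+D_4$ and never invokes a lower bound on $\min_{x\in D_2+D_4}|Q_{13}(x)|$; the obstacle you tried to resolve by a fixed-point trick is one the paper simply sidesteps by working with the norm of $Q$ rather than with its pointwise values.
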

\begin{proof}
For $(i,j)=(1,2),(3,4)$, denote by $Q'_{ij}$ the quadric representing the lower $\dim I_2(C) + 1$ singular vector of $M_{ij}$. Then by definition $Q'_{ij}$ represents a hyperplane in $|2K|$ whose gap from the points of $D_i+D_j$ is at most $\gep$.
If we look at the gap between the sum of the divisors and the quartic $Q'_{12}Q'_{34}$ we have that
\[\begin{aligned}
\fg&(D_1+D_2+D_3+D_4, Q'_{12}Q'_{34})^2=\sum_{x\in D_1+D_2+D_3+D_4}|Q'_{12}(x)|^2\cdot |Q'_{34}(x)|^2\\
=&\sum_{x\in D_1+D_2}|Q'_{12}(x)|^2\cdot |Q'_{34}(x)|^2+\sum _{x\in D_3+D_4}|Q'_{12}(x)|^2\cdot |Q'_{34}(x)|^2\\
<& \,\, \gep^2\deg(D_1+D_2)|M_{34}|^2+\gep^2\deg(D_3+D_4)|M_{12}|^2.
\end{aligned}\]
Similarly, we also have
\[\begin{aligned}
\fg(D_1+D_3, Q'_{12}Q'_{34})^2=\sum_{x\in D_1+D_3}|Q'_{12}(x)|^2\cdot |Q'_{34}(x)|^2
< \,\, \gep^2\deg(D_1)|M_{34}|^2+\gep^2\deg(D_3)|M_{12}|^2.
\end{aligned}\]
Recalling $V$ and $B$ from \Cref{dfn:approxm_syz}, the norm of the projection of $[Q'_{12}Q'_{34}]$ on $V$ is bounded by $\gep_R:=\fg(D_1+D_3, Q'_{12}Q'_{34})/B$.
i.e., there exists a quadric $Q$ and a quartic $R$ so that $Q'_{12}Q'_{34}-QQ_{13} + R\in I_4(C)$. The norm of $R$ is bounded by $e_R$, and $\fg(D_1+D_3, R) < \fg(D_1+D_3, Q'_{12}Q'_{34})$.
This gives the following bound for the gap $D_1+D_2+D_3+D_4$ and $QQ_{13}$:
\[\begin{aligned}
\fg&(D_1+D_2+D_3+D_4, QQ_{13}) < \fg(D_1+D_3, Q'_{12}Q'_{34}) + \fg(D_1+D_2 + D_3 + D_4, Q'_{12}Q'_{34})\\
<&(\sqrt{4g-4}+\sqrt{8g-8})\cdot \max(|M_{12}|,|M_{34}|)\gep=\sqrt{g-1}(2+2\sqrt{2})\max(|M_{12}|,|M_{34}|)\gep.
\end{aligned}\]
We now observe that
\[
  |Q'_{12}|=|Q'_{34}|=1\quad\Rightarrow\quad1\geq|Q'_{12}Q'_{34}|\geq\frac{1}{\sqrt{2}}\quad\Rightarrow\quad\geq|QQ_{13}|\geq\frac{1}{\sqrt{2}}-\gep_R,
\]
and so, since $|Q_{13}|=1$, we have $1\geq|Q|\geq\frac{1}{\sqrt{2}}-\gep_R$ and
the result follows.
\end{proof}
\begin{theorem}\label{thm:approx_steiner}
Let $S,\ol{S}$ be two subsets of the set of unordered pairs of odd theta characteristics of $C$ so that $\#\ol{S}=2^{2g}-1$. Assume that there are distinguished elements $\ol{s}\in\ol{S}$, and $s\in S$ such that
\begin{enumerate}
    \item each pair in $\binom{\ol{S}}{2}$ either intersects in a theta characteristic, or forms an $A$-numerically azygetic quadruple,
    \item for any $s'\in S\ssm\{s\}$, the theta characteristics in $s,s'$ are $\gep$-approximately syzygetic,
    \item for any ${\ol{s}}'\in\ol{S}\ssm\{\ol{s}\}$, the theta characteristics in $s,{\ol{s}}'$ are $A$-numerically azygetic,
    \item $A/e$ is bigger than the bound from \Cref{prp:weaktransitivity}
\end{enumerate}
then $S$ is a constituent of a Steiner set.
\end{theorem}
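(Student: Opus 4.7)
The plan is to show directly that $S\subseteq\Sigma_{\alpha_0}$ for a uniquely determined Steiner set $\Sigma_{\alpha_0}$. First I would locate the distinguished $s$ via a counting argument, and then rule out by contradiction, using \Cref{prp:weaktransitivity}, that any other element of $S$ could lie outside $\Sigma_{\alpha_0}$.

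From condition~(1), any two distinct pairs in $\ol{S}$ either share a theta characteristic (hence automatically lie in different Steiner sets, since each Steiner set partitions the odd theta characteristics into disjoint pairs) or are $A$-numerically azygetic, which for $A>0$ is genuine azygecy and again places them in different Steiner sets. Combined with $\#\ol{S}=2^{2g}-1$ matching the number of Steiner sets, this shows $\ol{S}$ contains exactly one representative from each Steiner set; write $\Sigma_{\alpha_0}$ for the one containing $\ol{s}$. Condition~(3) then forces $s\in\Sigma_{\alpha_0}$, since $s$ is genuinely azygetic from each of the $2^{2g}-2$ elements of $\ol{S}\setminus\{\ol{s}\}$, which together represent every Steiner set other than $\Sigma_{\alpha_0}$.

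For the core argument, suppose for contradiction that some $s'\in S\setminus\{s\}$ lies in $\Sigma_\gamma$ with $\gamma\neq\alpha_0$, and let $\ol{s}_\gamma$ be the unique element of $\ol{S}\cap\Sigma_\gamma$ (so $\ol{s}_\gamma\neq\ol{s}$). The pairs in $\Sigma_\gamma$ correspond to divisors in the linear system $|K_C+\gamma|$ of dimension $g-2\geq 1$; I pick an auxiliary $D_3\in|K_C+\gamma|$ generic, with support disjoint from those of $s$, $s'$, and $\ol{s}_\gamma$ (possible by a dimension count). Then apply \Cref{prp:weaktransitivity} with $D_1=s'$, $D_2=s$, $D_3$, $D_4=\ol{s}_\gamma$: the pair $(D_1,D_2)=(s',s)$ is $\epsilon$-approximately syzygetic by condition~(2); the pair $(D_3,D_4)$ is exactly (hence $0$-approximately) syzygetic because $D_3+\ol{s}_\gamma\sim 2(K_C+\gamma)\sim 2K_C$; and $(D_1,D_3)=(s',D_3)$ is syzygetic because $s'+D_3\sim 2K_C$. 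The conclusion asserts that $(D_2,D_4)=(s,\ol{s}_\gamma)$ is approximately syzygetic with bound equal to the quantity in \Cref{prp:weaktransitivity}. By condition~(4), this bound is strictly below $A$, contradicting the $A$-numerical azygecy of $(s,\ol{s}_\gamma)$ supplied by condition~(3). Hence $S\subseteq\Sigma_{\alpha_0}$, i.e., $S$ is a constituent of a Steiner set.

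The main obstacle I anticipate is the introduction and book-keeping of the auxiliary divisor $D_3$: verifying that a generic choice in $|K_C+\gamma|$ has support disjoint from $s$, $s'$, $\ol{s}_\gamma$, and that the bound from \Cref{prp:weaktransitivity} remains controlled by $\epsilon$ and the norms $|M_{12}|,|M_{34}|$ uniformly in $D_3$. The alternative framing is to invoke \Cref{prp:partition} applied to $\ol{S}\setminus\{\ol{s}\}$ of size $2^{2g}-2$, for which the same contradiction step certifies the required $\ol{S}\setminus\{\ol{s}\}\times S$ azygecy.
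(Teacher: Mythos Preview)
Your proposal and the paper follow the same route: the paper's proof is literally the single sentence ``It follows from \Cref{prp:partition} and \Cref{prp:weaktransitivity},'' and your two paragraphs are exactly the unpacking of that sentence --- first the counting argument placing $s$ in the Steiner set $\Sigma_{\alpha_0}$ represented by $\ol{s}$ (this is \Cref{prp:partition} applied to $\ol{S}\setminus\{\ol{s}\}$), then weak transitivity to force every other $s'\in S$ into the same set. Your ``alternative framing'' in the last paragraph is precisely the reduction the paper has in mind.

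The only soft spot is the one you already flag: the auxiliary generic $D_3\in|K_C+\gamma|$. The bound in \Cref{prp:weaktransitivity} depends on $|M_{34}|$ and on $B$, and $B$ in turn is built from the points of $D_1+D_3$. Condition~(4) is meant as a concrete numerical inequality that the paper verifies against the \emph{finite} list of certified divisors (cf.\ the computed bounds $|M_{ij}|<3$, $B>2\cdot10^{-4}$ in the proof of \Cref{prp:correct_steiner}); an uncomputed auxiliary $D_3$ lies outside that list, so you cannot literally invoke~(4) for it. The cleanest repair is to stay inside the certified data: take $D_3=D_4=\ol{s}_\gamma$, so that $(D_3,D_4)$ is exactly (hence $0$-approximately) syzygetic and all four divisors, together with the pair $D_1+D_3=s'+\ol{s}_\gamma$ governing $B$, are among the certified ones. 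This is a mild degeneration of the hypotheses of \Cref{prp:weaktransitivity}, but the proof of that proposition goes through with this choice. With that adjustment your argument is complete and coincides with the paper's.
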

\begin{proof}
    It follows from \Cref{prp:partition} and \Cref{prp:weaktransitivity}.
\end{proof}
%
\section{Numerically certifying the dimensions of $V_{2,\ga}$ and their intersection}\label{sec:dimcert}
%
In this section, we assume that we are given theta hyperplanes up to bounded accuracy as points in $|K_C|$, together with the structure of Steiner sets, and we show
how to bound from below $\codim V_{2,\ga}$ and  $\dim \cap_{\ga\in JC[2]\ssm\{0\}}V_{2,\ga}$, which are the fundamental quantities of the present work, introduced in \Cref{dfn:PropertiesABC}. Assuming the certification of theta hyperplanes and Steiner sets, these bounds are the only remaining obstacles in the proof of \Cref{thm:main}.

We collect some notation for the following propositions. Recall the definition of a Steiner set $\gS_{\ga}$ for a nonzero 2-torsion point $\ga$ from \Cref{def:SteinerSet}. Assume that the linear forms $l_\theta, l_{\gth+\ga}$ representing theta hyperplanes in the Steiner set $\gS_\ga$ are normalized and known up to accuracy $\gep$ in $L_2$-norm in the monomial basis, and let $q_{\gth, \theta+\ga}=l_\gth\cdot l_{\gth+\ga}$. Fixing a Steiner set $\gS_\ga$, we denote by $M_\ga$ the matrix whose rows are the coordinate vectors of the quartic forms $q^2_{\theta, \theta+\ga}$ in the monomial basis of $\Sym^2\Sym^2H_0(K)$. Let $\{u_{\ga,i}\}_i$ be an orthonormal basis for the kernel of $M_\ga$. Last but not least, recall from \Cref{codimV} that $d_\ga$ denotes the expected codimension of $V_{2,\ga}$ inside $\Sym^2\Sym^2 H^0(K_C)$. We set
\[r :=d_\ga + \dim \Sym^n\Sym^2H^0(K_C+\ga) = \dim I_n(\gD(H^0(K_C+\ga)) + \dim \Sym^nH^0(2K_C).\]
\begin{proposition}\label{prp:one_steiner}
  With the notation above, the following statements hold
    \begin{enumerate}
        \item \label{item:q_theta} The quadratic forms $q_{\theta, \theta+\ga}=l_\theta\cdot l_{\theta+\ga}$ are accurate up to $2\gep + \gep^2$.
        \item \label{item:Malpha} The matrix $M_\ga$ is accurate up to $\gep':=\sqrt{2}\cdot2^{g-1} (2(2\gep + \gep^2)+(2\gep + \gep^2)^2)$ in Frobenius norm.
        \item \label{item:sigma_r} If the $r$th singular value of $M_\ga$ is greater than $\gep'$ then the codimension of $V_{2,\ga}$ is $d_\ga$.
        \item \label{item:u_i}  If the hypothesis of \Cref{item:sigma_r} holds then the basis  $\{u_{\ga,i}\}_i$ is accurate up to $\gep'':=\frac{\sigma_{r+1}+\gep'}{\sigma_r-\gep'}$.
    \end{enumerate}
\end{proposition}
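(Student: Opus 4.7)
The four parts are handled in sequence, each feeding into the next. For (1), expand bilinearly: writing the computed forms as $l_\gth+\gd_\gth$, $l_{\gth+\ga}+\gd_{\gth+\ga}$ with $|\gd_{\cdot}|\le\gep$ and $|l_\gth|=|l_{\gth+\ga}|=1$,
\[
l'_\gth\, l'_{\gth+\ga}-l_\gth\, l_{\gth+\ga}\;=\;l_\gth\gd_{\gth+\ga}+\gd_\gth l_{\gth+\ga}+\gd_\gth\gd_{\gth+\ga},
\]
whose $L^2$-norm is at most $2\gep+\gep^2$ by the triangle inequality. For (2), apply the same bilinear expansion a second time to $q^2_{\gth,\gth+\ga}=q_{\gth,\gth+\ga}\cdot q_{\gth,\gth+\ga}$, yielding per-row error $2(2\gep+\gep^2)+(2\gep+\gep^2)^2$; summing squared row errors gives a Frobenius-norm bound of $\sqrt{|\gS_\ga|}$ times this, and the classical bound $|\gS_\ga|\le 2^{g-1}(2^g-1)\le 2^{2g-1}$ on the Steiner-set size gives $\sqrt{|\gS_\ga|}\le \sqrt{2}\,2^{g-1}$, hence $\gep'$.

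For (3), the plan is Weyl's inequality on singular values followed by a rank-to-codimension translation. Writing $M_\ga'$ for the computed matrix, $\|M_\ga-M_\ga'\|_2\le\|M_\ga-M_\ga'\|_F\le\gep'$, so $|\sigma_r(M_\ga)-\sigma_r(M_\ga')|\le\gep'$ and the hypothesis $\sigma_r(M_\ga')>\gep'$ forces $\operatorname{rank} M_\ga\ge r$. Since $\pi^2_{K_C}(q^2_{\gth,\gth+\ga})=(l_\gth l_{\gth+\ga})^2$, and since $l_\gth l_{\gth+\ga}$ is a perfect square $v_\gth^2$ in $H^0(2K_C)$ for the unique $v_\gth\in H^0(K_C+\ga)$ cutting out $\gth+(\gth+\ga)$, one has $(l_\gth l_{\gth+\ga})^2=v_\gth^4\in U_{2,\ga}$, so each row of $M_\ga$ lies in $V_{2,\ga}$ and $\dim V_{2,\ga}\ge r$. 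Combined with $V_{2,\ga}\supseteq\ker\pi^2_{K_C}$ and the surjectivity of $\pi^2_{K_C+\ga}$ (Prym-Green in $g=6$, and the older \cite{LaSe} range in $g=7$), the dimension count pins $\dim V_{2,\ga}$ at the expected $N-d_\ga$, and \Cref{lem:codimV} supplies the opposite inequality for the codimension.

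For (4), invoke a Wedin / Davis-Kahan $\sin\gth$ bound from \Cref{sec:linear}: by (3) the true $M_\ga$ has $\sigma_r>0$ and its kernel has dimension $N-r$, and a Frobenius perturbation of size $\gep'$ rotates that kernel subspace by an angle whose sine is at most
\[
\frac{\sigma_{r+1}(M_\ga')+\gep'}{\sigma_r(M_\ga')-\gep'}\;=\;\gep''.
\]

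The main obstacle is the rank-to-codimension translation in (3): the row span $W_\ga=\rmspan\{q^2_{\gth,\gth+\ga}\}$ is only a subspace of $V_{2,\ga}$ and need not contain $\ker\pi^2_{K_C}$, so combining $\dim W_\ga\ge r$ with $V_{2,\ga}\supseteq\ker\pi^2_{K_C}$ is a delicate dimension count that depends on the specific identity $r=d_\ga+\dim\Sym^2\Sym^2 H^0(K_C+\ga)$ encoding the (bijective in $g=6$, injective on the relevant subspace in $g=7$) behavior of the Prym-canonical map $\pi^2_{K_C+\ga}$. Parts (1), (2), and (4) are otherwise standard error propagation and perturbation theory once (3) is in hand.
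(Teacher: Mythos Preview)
Your proposal is correct and essentially matches the paper's (very terse) proof. Parts (1), (2), and (4) line up exactly: bilinear expansion, row-wise Frobenius bound, and a Davis--Kahan/Wedin type bound (the paper cites \Cref{prp:solcont}, but the formula $\gep''=\tfrac{\sigma_{r+1}+\gep'}{\sigma_r-\gep'}$ is really \Cref{prp:kercont}).

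For (3), however, you overthink the ``rank-to-codimension translation.'' The paper's argument is the one-liner you yourself sketch first: the hypothesis $\sigma_r(M_\ga')>\gep'$ plus the Frobenius perturbation bound (2) forces $\operatorname{rank} M_\ga\ge r$; since each row $q^2_{\gth,\gth+\ga}$ lies in $V_{2,\ga}$ (as you verify), $\dim V_{2,\ga}\ge r$; and \Cref{lem:codimV} already gives $\codim V_{2,\ga}\ge d_\ga$, i.e.\ $\dim V_{2,\ga}\le\dim\Sym^2\Sym^2H^0(K_C)-d_\ga$. With $r$ defined to be this expected dimension, equality follows. Your invocation of Prym--Green and \cite{LaSe} surjectivity of $\pi^2_{K_C+\ga}$ is unnecessary here: that input is already absorbed into the definition of $d_\ga$ in \Cref{codimV}, hence into the value of $r$, and the paper does not re-invoke it. Likewise, the containment $V_{2,\ga}\supseteq\ker\pi^2_{K_C}$ plays no role in this item; it only enters later, in Property~\textbf{C}. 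So your ``main obstacle'' paragraph can be dropped---the dimension count is a direct sandwich between the numerical lower bound and \Cref{lem:codimV}.
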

\begin{proof}
To prove \Cref{item:q_theta} note that the error in the norm of $l_\theta\otimes l_{\theta+\ga}$ is bounded by $2\gep\cdot 1+\gep^2$. Thus, when taking the symmetric product without normalizing, the norm is between this quantity and half of it.
\Cref{item:Malpha} follows from bounding the operator norm of $M_\ga$ by its Frobenius norm. By \Cref{lem:codimV}, we already have an inequality in the other direction, so the equality in \Cref{item:sigma_r} holds. Finally, \Cref{item:u_i} follows from \Cref{prp:solcont}.
\end{proof}
\begin{proposition}\label{prp:all_steiner}
    With the notations above, assume that $\{u_{{\ga_k}, i}\}_{i, k}$ is accurate up to $\gep''$ for the Steiner set $\gS_{\ga_k}$ for each $1< k < m$. Assume that the $(\dim \ker\pi^2_{K_C})$th singular value of the matrix whose rows are $\{u_{{\ga_k}, i}\}$ is greater than $\gep''\sqrt{d_\ga \cdot m}$ then $\dim \cap_{k=1}^m V_{2,\ga_k}C$ is at most $\dim \ker\pi^2_{K_C}$. Therefore these spaces are equal.
\end{proposition}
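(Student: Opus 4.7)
The plan is to convert the hypothesis into a rank lower bound on the exact analog of the stacked matrix, and then translate that rank back into the target dimension bound. Write $M'$ for the computed stacked matrix and $M$ for its exact counterpart whose rows are the true orthonormal bases of $V_{2,\ga_k}^\perp$. Two facts about $M$ are immediate. First, its row span equals $\sum_k V_{2,\ga_k}^\perp=(\cap_k V_{2,\ga_k})^\perp$, so $\mathrm{rank}(M)=\codim(\cap_k V_{2,\ga_k})$. Second, since $\ker\pi^2_{K_C}\subseteq V_{2,\ga_k}$ for every $k$ (because $V_{2,\ga_k}$ is a preimage under $\pi^2_{K_C}$), we have $\sum_k V_{2,\ga_k}^\perp\subseteq(\ker\pi^2_{K_C})^\perp$, i.e., $\mathrm{rank}(M)\leq\codim\ker\pi^2_{K_C}$. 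Hence the target inequality $\dim\cap_k V_{2,\ga_k}\leq\dim\ker\pi^2_{K_C}$ is equivalent to saying that $\mathrm{rank}(M)$ meets this upper bound.

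To certify that rank, I would first bound $\|M-M'\|_F$. Proposition~\ref{prp:one_steiner} supplies per-row accuracy $\gep''$ for each $u_{\ga_k,i}$, and since the stacked matrix has $d_\ga\cdot m$ rows, squaring and summing gives $\|M-M'\|_F\leq \gep''\sqrt{d_\ga\cdot m}$. By the Weyl-type perturbation bound on singular values already invoked as Theorem~\ref{thm:sing_sum}, the corresponding singular values of $M$ and $M'$ agree up to $\gep''\sqrt{d_\ga\cdot m}$. The singular value lower bound assumed for $M'$ therefore forces the corresponding singular value of $M$ to be strictly positive, producing exactly the rank lower bound $\mathrm{rank}(M)\geq\codim\ker\pi^2_{K_C}$.

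Reading the rank statement back through $\mathrm{rank}(M)=\codim(\cap_k V_{2,\ga_k})$ yields $\dim\cap_k V_{2,\ga_k}\leq\dim\ker\pi^2_{K_C}$. The reverse inequality is tautological from $\ker\pi^2_{K_C}\subseteq\cap_k V_{2,\ga_k}$, so the two spaces have equal dimension and must coincide. The only nontrivial step is the error bookkeeping that produces $\gep''\sqrt{d_\ga\cdot m}$ as the matrix-level Frobenius error from the per-row accuracy $\gep''$; everything else is a direct application of singular value perturbation, so no deep obstacle is expected once Proposition~\ref{prp:one_steiner} and Theorem~\ref{thm:sing_sum} are in hand.
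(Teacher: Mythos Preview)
Your proof is correct and is precisely a fleshed-out version of the paper's one-line argument ``This follows from the definition of singular value.'' The paper leaves implicit exactly the steps you spell out: the identification of the row span of the exact stacked matrix with $(\cap_k V_{2,\ga_k})^\perp$, the Frobenius bound $\gep''\sqrt{d_\ga m}$ coming from $d_\ga m$ rows each perturbed by at most $\gep''$, the transfer of the singular-value lower bound via \Cref{thm:sing_sum}, and the reverse containment $\ker\pi^2_{K_C}\subseteq\cap_k V_{2,\ga_k}$.
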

\begin{proof}
    This follows from the definition of singular value.
\end{proof}
%
\section{A small matter of programming: Concluding the proof of the main theorem}\label{sec:program}
%
As explained in the introduction, the object of this section is fourfold.
We start by explaining how one computes the non-certified approximated
hyperplanes and their (approximated and uncertified)
intersection points with the curve.
Next we discuss inherent errors in using numerical linear algebra, which we
treat as an external certification oracle. Then we discuss numerical errors
incurred during the certification. Addressing these sources of errors concludes the proof of the main theorem -- \Cref{thm:main}. Finally, we discuss testing the certification code. Our code and data
are
accessible at \cite{CL24}. For a bird's-eye review of the structure of the certification code and its auxiliary output, see the documentation of
the code.

\subsection{Generating non-certified theta hyperplanes}
As explained in \Cref{sec:main}, our proof hinges on first proving
\Cref{thm:main} for a
specific curve in each genus.
In genus 6 we use a plane model of Wiman's sextic~\cite{Wim6}; the unique
smooth non-hyperelliptic genus 6 curve whose automorphism group is $\mathbf{S}_5$. Its
plane model is given by
\[
x^6+y^6+z^6+(x^2+y^2+z^2)(x^4+y^4+z^4)-12x^2y^2z^2=0.
\]
In genus 7 we use a plane model of the Fricke-Macbeath curve~\cite{Fri,Mac};
the unique genus 7 Hurwitz curve. We use a plane model given in
\cite{Hid}, who attributes it to Bradley Brock:
\[
z^8+7xyz^6+21x^2y^2z^4+35x^3y^3z^2+28x^4y^4+2x^7z+2y^7z=0.
\]
In both cases we use the {\em certified} SageMath package~\cite{BruSijZot} to
compute a Riemann matrix and the {\em uncertified} SageMath
package~\cite{BruGan} to compute the
$2^{g-1}(2^g-1)$ linear forms putatively approximating the theta hyperplanes,
together with putative level 2 structure, as in~\eqref{eq:thetaChar}.
Computing the adjoint ideal, we then normalize the planar curves and present
them as an intersection of quadrics in their respective canonical systems.
Lastly we use Julia homotopy continuation~\cite{BreTim} to approximate the
tangency points of the hyperplanes and the curves in the canonical models.

There are of course no numerical errors in generating $I_2(C)$, which is
comprised of quadrics $Q_k$ with integer coefficients. As for the
putative hyperplanes $H'_i$, and their putative intersection points $P'_{ij}$
with the curve (see \Cref{dfn:stability_notations} for the notation), we
measured the following numerical errors:
\[
H'_i\cdot P'_{ij} < 10^{-10}, \quad \sqrt{\sum_k|{P'_{ij}}^tQ_kP'_{ij}|^2}< 10^{-11},\quad \sqrt{\sum_j|H'_iT'_{ij}|^2}<10^{-6},\quad\sqrt{\sum_j|(\wedge_j P'_{ij})_iT'_{ij}|^2}<7\cdot10^{-8}.
\]
Note that the last two errors --  which are essentially two incarnations of the
same object -- are not part of the input data per se.
These last two errors dominate the other input errors, which in turn dominate
the errors introduced by our SVD computations, which dominate errors introduced
by any other numerical computations (which only introduces errors of the order
of magnitude of {\small \verb|E|}).

\subsection{Errors inherent to using SVD}\label{subsec:num}
Per \verb|LAPACK| documentation, the error in the singular values is the machine precision \verb|E| (for IEEE-754 double \verb|E|=$2^{-53}\sim$\verb|1.1e-16|) times the top computed singular value. When bounding the norm of an operator, the maximal error is exactly this value as well. There are two components in the certification process that we care about more than the singular values. The first one is in \Cref{prp:rcont} where we compute a solution of a linear system. The second one is  \Cref{prp:all_steiner}, which uses the left pseudo kernel of the matrices from \Cref{prp:one_steiner}. See \Cref{rmr:lapack_oracle} for the details of the accuracy bounds in these computations.

\subsection{The numerics of \Cref{sec:certificationThetaHyperplanes}}
For most objects considered in this section, we have three different
incarnations:
\begin{itemize}
    \item The \emph{Platonic incarnation}: the theta hyperplanes $H_i$ and the tangency points.
    \item The points $P_{ij}$, which are true intersection points of $H'_i$ with the curve.
    \item The {\em numerically computed} objects we actually have at our disposal -- given by $P_{ij}'$ and $H'_{i}$.
\end{itemize}
The technical part of \Cref{sec:certificationThetaHyperplanes} is roughly divided into two parts, bounding the differences between the Platonic and the second incarnations, and then the second and the last incarnations.
Although one cannot talk about certification without considering the combination of the two parts, they are somewhat different in flavor. Loosely speaking, in the first part we establish claims whose condition is that $r_{i,x}, \kappa_{0,i},\kappa_{1,i}, c_i,\sigma_{g-1}(\cJ_F)$ are all ``$O(1)$", where $r_{i, x},\kappa_{0,i},\kappa_{1,i}$ are all minima
of {\em short} multiplicative (this includes divisions and taking roots)
expressions in
$|R|,|W_R|,|W_T|,q_m,\sigma_{g-2}(M),\sigma_{g-1}(M_T),\sigma_g(M_R)$. The second
part shows how one computes the error in estimating these quantities. Again, broadly speaking, the claims in the first part would hold if the errors computed in the second part are smaller than the quantities in the first part.

The following theorem gives bounds related to the computed hyperplanes $H_i'$ and the points $P_{ij}'$. The bounds which appear below do not bound all the input objects of the corresponding types. Rather, {\em we only consider a subset} of hyperplanes and the respective points that suffice to conclude the proof of the main theorem. In particular, the number of objects should suffice to get the relevant space dimensions.

\begin{theorem}\label{them:accuracy_sec4}
    The computed hyperplanes $H'_i$ and points $P'_{ij}$ which we used for the certification satisfy the following bounds.
    \begin{itemize}
    \item For any used $i$, we have $\sqrt{\sum_j|P'_{ij} - \text{closest point to }P'_{ij}\text{ in }H_i\cap C|^2}<2\cdot10^{-6}$.
    \item For any used $i$, we have $|H'_i-H_i|<10^{-9}$.
    \end{itemize}
\end{theorem}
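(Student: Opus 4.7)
The plan is a computational verification: both bounds are obtained by running the certification pipeline of \Cref{sec:certificationThetaHyperplanes} on the specific data generated in the previous subsection, with \Cref{thm:pp} producing the point-wise bound and \Cref{thm:hh} producing the hyperplane bound.

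For each used index $i$ and each associated tangency candidate $P'_{ij}$, the first step is to turn the measured residual $\sqrt{\sum_k|{P'_{ij}}^tQ_kP'_{ij}|^2}<10^{-11}$ into a bound $|P_{ij}-P'_{ij}|<\gep_{0,ij}$ via \Cref{lma:close_to_P}. Feeding this into \Cref{prp:tcont}, together with the computed singular values of $M_{T'_{ij}}$ and the \verb|ddisna| oracle from \Cref{rmr:lapack_oracle}, yields a bound $\gep_{1,ij}$ on $|T_{ij}-T'_{ij}|$. A second invocation, this time of \Cref{prp:rcont} with $\gep_{1,ij}$, the singular values of $M_{R'_{ij}}$, and the \verb|zgelss| accuracy estimate, bounds $|R_{ij}-R'_{ij}|<\gep_{2,ij}$. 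With all $\{\gep_{0,ij},\gep_{1,ij},\gep_{2,ij}\}_j$ in hand, I form $\cJ_{F'}\restriction_0$, bound $\|\cJ_F\restriction_0-\cJ_{F'}\restriction_0\|_F\le\gep_{\cJ_F}$ via \Cref{lma:F_with_errors}, and solve $\cJ_{F'}\restriction_0\cdot x'=-F'(0)$ for $x'$. \Cref{prp:xx} then certifies a value $\gep_x$ with $|x-x'|<\gep_x$, provided the fixed-point inequality \eqref{eq:gep_x} and the side condition $|x'_j|+\gep_x<r_{j,x}$ are verified numerically for all $j$. Substituting these quantities into \Cref{thm:pp} yields the first bound, and into \Cref{thm:hh} the second, modulo an extra $(1+\verb|E|)^N$ safety factor for roundoff in the final evaluation.

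The main obstacle is not algebraic but rather the numerical verification of all the side conditions: one must confirm that $\sigma_{g-2}(M)$, $\sigma_{g-1}(M_T)$, and $\sigma_g(M_R)$ are bounded well away from zero so that $r_{j,x},\kappa_{0,j},\kappa_{1,j}$ stay $O(1)$; that $\sigma_{g-1}(\cJ_{F'}\restriction_0)$ dominates $\gep_{\cJ_F}$ so that \Cref{prp:xx} applies; and that the dominating summands of \eqref{eq:gep_x} --- per \Cref{rmr:grad_descent}, the last two --- remain small enough to land within the stated tolerances. Since the measured residuals $\sqrt{\sum_j|H'_iT'_{ij}|^2}<10^{-6}$ and $\sqrt{\sum_j|(\wedge_jP'_{ij})_iT'_{ij}|^2}<7\cdot10^{-8}$ exceed the SVD-induced roundoff by many orders of magnitude, the $2\cdot10^{-6}$ figure is expected to be driven by the propagation of the $10^{-6}$ residual through \Cref{thm:pp}, while the tighter $10^{-9}$ figure is driven by the tangency-alignment term $(|x'|+\gep_x)\sum_j|\langle T'_{ij},n_{H'_i}\rangle|$ in \Cref{thm:hh}: once $|x'|+\gep_x$ is certified to be small, this term suppresses the $10^{-6}$ residual by the remaining three decimal places.
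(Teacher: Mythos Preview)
Your proposal is correct and follows essentially the same route as the paper: feed the measured residuals through \Cref{lma:close_to_P}, \Cref{prp:tcont}, \Cref{prp:rcont}, \Cref{lma:F_with_errors}, and \Cref{prp:xx} to certify $\gep_x$, then read off the two bounds from \Cref{thm:pp} and \Cref{thm:hh}; you also correctly identify which terms dominate (the last two summands of \eqref{eq:gep_x}, and the tangency-alignment term in \Cref{thm:hh}). The paper's proof differs only in that it records the actual measured values of $\sigma_{g-2}(M')$, $\sigma_{g-1}(M_T')$, $\sigma_g(M_R')$, $|R'|$, $|W_T'|$, $|W_R'|$, $\gep_{0,1,2}$, $\gep_{\cJ_F}$, $|x'|$, and $\sigma_i(\cJ_F')$ for the two specific curves and then plugs them in, which is the execution step your outline leaves implicit.
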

\begin{proof}
\Cref{sec:certificationThetaHyperplanes} gives exact bounds on what the relations between ``$O(1)$" and ``small" should be. However, the bounds we get on the numbers involved are so good, that there is no point in performing the meticulous computation. In what follows $x < (y,z)$ means that $x<y$ for the
Wiman curve, and $x<z$ for the Fricke-Macbeath curve:
{\small\[
\begin{aligned}
  \ & \sigma_{g-2}(M'),\sigma_{g-1}(M_T') > .1,\qquad
|M'|,|M_T'| < (50, 10), \qquad
  \sigma_g(M_R') > (.3, .7),
  \\
  \ & \sigma_{g-1}(M'),\sigma_g(M_T') < (10^{-11}, 10^{-10}),\qquad
|\{M'Q_i\}_i| < (10, 5),\qquad
|W_T'|,|W_R'| \in [.01, 10], \qquad
|R'|\in[.2, 5]\\
\ &\gep_1,\gep_2 < (10^{-10}, 10^{-11}),\qquad
\gep_0 < 10^{-13},\qquad
\gep_{\cJ_F} < (10^{-9}, 5\cdot10^{-11}),\qquad
|x'|<3\cdot 10^{-7},\qquad
\sigma_i(\cJ_F')\in[.05,2].
\end{aligned}
\]}
Whereas the maximal errors in computing the singular values of the matrices involved are at most $4.1\cdot 10^{-16}$ for the Fricke-Macbeath curve and $2.4\cdot10^{-15}$ for Wiman's sextic.
One easily verifies that the claim in \Cref{rmr:grad_descent} \Cref{i:third_term} about \Cref{eq:gep_x} holds. Thus, by \Cref{prp:xx} and \Cref{thm:pp}, the quantity $\sqrt{\sum_i|P'_i - \text{closest point to }P'_i\text{ in }H\cap C|^2}$ is bounded by $\max|x'|+\max\epsilon_{\cJ_F}/\sigma_{g-1}(\cJ_F)^2+\max\sqrt{\sum_j|(\wedge_j P_{ij})_iT'_{ij}|^2}/\sigma_{g-1}(\cJ_F)$ + quantities which are several orders of magnitude smaller, which is smaller than $2\cdot10^{-6}$.

Turning our attention to $|H'_i-H_i|$, by \Cref{thm:hh}, the bound is given by
\[
2\cdot10^{-6}\cdot(\sum_i|\langle T'_i,n_{H'}\rangle|+\max(\gep_1,\gep_2,\gep_0))+
\Tail_2\prod_i\eta_i(2\cdot10^{-6}),
\]
where $|\langle T'_i,n_{H'}\rangle|<10^{-6}$. Thus the highest terms in the expression are $2\cdot10^{-6}(g-1)10^{-6}$ from the first term, and $g^2\max(|R|)\cdot(2\cdot10^{-6})^2$
from the latter.
\end{proof}
\subsection{Errors incurred by certifying the Steiner set in \Cref{sec:certificationSteiner}}
The ingredients involved in the certification of the Steiner sets are:
\begin{itemize}
    \item The accuracy of the approximated tangency points, for which we have bounds presented above.
    \item The operator norms of the matrices $M_{ij}$ defined in \Cref{dfn:M_D}.
    \item The {\em maximal} (resp. {\em minimal}) computed $\gep$ (resp. $A$) for $\gep$-approximately syzygetic (resp. $A$-numerically azygetic) pairs of pairs of theta characteristics.
    \item The smallest singular value $B$ defined in \Cref{dfn:approxm_syz}.
\end{itemize}
Note that the operator norms $|M_{ij}|$ incur computational errors which we bound via the bound on the computational errors from the tangency points. Specifically, if a point is known with accuracy $\gep$ then the corresponding line in the matrix $M_{ij}$ is known with accuracy $2\gep+\gep^2$. Thus by \Cref{them:accuracy_sec4} the Frobenius norm of the error in matrices $M_{ij}$s is bounded by $\sqrt{\frac{4(g-1)}{g-1}}\cdot(4\cdot10^{-6}+4\cdot10^{-12})<10^{-5}$. Referring to
the last item, one can bound the error in computing $B$ by tracing the steps in \Cref{dfn:approxm_syz}. This is done by bounding the norm of the product of two quadrics between $1/2$ and exactly the product of the norms, bounding the Frobenius norms, and applying \Cref{prp:kercont}. The reason we do not explicitly trace these steps is that we did similar computations several times before, and the quotient $\gep/B$ we get in practice is much smaller than anything that errors in computing $B$ might contribute.
\begin{proposition}\label{prp:correct_steiner}
For the subset of hyperplanes which pass our tests, the labeling of the Steiner sets obtained from the non-certified level 2 structure is correct.
\end{proposition}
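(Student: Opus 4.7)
The plan is to reduce the claim to a direct application of \Cref{thm:approx_steiner}, checked once for each of the $2^{2g}-1$ putative Steiner sets produced by the non-certified level $2$ structure. For a given nonzero $\ga\in\Jac(C)[2]$, the non-certified level $2$ data supplies a candidate set $\Sigma'_\ga$ of pairs of odd theta characteristics; to certify that $\Sigma'_\ga=\Sigma_\ga$, I would pick a distinguished element $s\in\Sigma'_\ga$, take $\bar{S}$ to be the disjoint union of one distinguished pair from each of the other putative Steiner sets (so $\#\bar{S}=2^{2g}-2$), fix a distinguished $\bar{s}\in\bar{S}$, and verify the four hypotheses of \Cref{thm:approx_steiner}.

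First I would propagate the tangency point accuracy of \Cref{them:accuracy_sec4} into a rigorous Frobenius-norm bound on $\|M_{ij}-M_{ij}^{\mathrm{true}}\|_F$ for each relevant $M_{ij}$ in the sense of \Cref{dfn:M_D}, exactly as in the paragraph preceding this proposition (where the $<10^{-5}$ bound is recorded). Combined with \Cref{thm:sing_sum} and the bookkeeping on SVD errors of \Cref{subsec:num}, this produces, for each putatively syzygetic pair coming from $\Sigma'_\ga$, a certified upper bound $\epsilon$ on the $(\dim I_2(C)+1)$-th singular value of $M_{ij}$; and for each putatively azygetic pair, a certified lower bound $A$ on the same singular value. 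These bounds are what items (1)--(3) of \Cref{thm:approx_steiner} require.

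The most delicate point is item (4), namely that $A/\epsilon$ exceeds the quantity of \Cref{prp:weaktransitivity}. This requires a certified lower bound on the quantity $B$ of \Cref{dfn:approxm_syz}, i.e.\ the smallest singular value of the evaluation of a basis of $V=\rmspan(H^0(2K_C)\otimes[Q_{13}],I_2(C))^{\perp}$ on $D_1+D_3$. I would compute $B$ numerically, bound the perturbation of the basis of $V$ via \Cref{prp:kercont} and the already-bounded $\|M_{ij}\|$, and then propagate through another application of the singular-value perturbation bound. The resulting $\epsilon/B$ will, in both genera, come out many orders of magnitude below the numerically observed $A$, so item (4) closes with large margin. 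The hard part is really just the bookkeeping: there is no new mathematical content beyond combining \Cref{prp:partition,prp:weaktransitivity,thm:approx_steiner} with the error bounds of the previous sections.

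Finally, performing the check above for each $\ga\neq 0$ shows that every $\Sigma'_\ga$ is a constituent of a genuine Steiner set; since a Steiner set determines its $\ga$ uniquely and the $\Sigma'_\ga$ partition the odd theta pairs in the same way a true level $2$ structure would, the labeling produced by the non-certified computation matches the true level $2$ structure on the subset of hyperplanes that pass the asserts. This gives \Cref{prp:correct_steiner}.
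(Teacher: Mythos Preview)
Your proposal is correct and follows essentially the same route as the paper: reduce to \Cref{thm:approx_steiner} and verify its numerical hypotheses using the certified tangency-point accuracies, the Frobenius bound $\|M_{ij}-M_{ij}^{\mathrm{true}}\|_F<10^{-5}$, and the perturbation bounds of \Cref{thm:sing_sum}, \Cref{prp:kercont}. The paper's proof does exactly this, simply recording the measured values $\epsilon<10^{-8}$, $|M_{ij}|<3$, $B>2\cdot10^{-4}$, $A>0.2$ (together with the SVD errors in computing them) and observing that the inequality of \Cref{prp:weaktransitivity} then holds with huge margin. One cosmetic slip: you build $\bar S$ with $\#\bar S=2^{2g}-2$ by taking a pair from each of the \emph{other} putative Steiner sets, whereas \Cref{thm:approx_steiner} as stated requires $\#\bar S=2^{2g}-1$ with the distinguished $\bar s$ lying in the \emph{same} Steiner set as $s$; your construction matches \Cref{prp:partition} directly rather than the packaged \Cref{thm:approx_steiner}, but the content is identical.
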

\begin{proof}
Recall that by \Cref{thm:approx_steiner} there are four numerical estimations at play here: the $\dim I_2(C)$-th singular value of $\gep$-approximately syzygetic sets measured exactly with $\gep$, and $B$ from \Cref{dfn:approxm_syz}, the $A$-numerically azygetic sets coefficient, and $|M_{ij}|$. They are connected by comparing $A/\gep$ with
\[
\frac{\sqrt{g-1}(2+2\sqrt{2})}{\sqrt{2}-\sqrt{4g-4}\max(|M_{12}|,|M_{34}|)\frac{\gep}{B}}\max(|M_{12}|,|M_{34}|).
\]
The computed bounds on these coefficients are:
$\gep<10^{-8}, |M_{ij}|<3,B>2\cdot 10^{-4}, A>.2$.
The maximal errors incurred in the singular value computations of $M_{ij}$, (i.e., in all the constants beside $B$) are $<7\cdot10^{-17}$ for the Fricke-Macbeath curve, and $<3\cdot10^{-16}$ for Wiman's sextic. As explained above and in more detail in \Cref{dfn:approxm_syz}, there are several singular value computations involved in computing $B$; the biggest error in these computations is $<2\cdot10^{-14}$ for Wiman's sextic and $<5\cdot10^{-15}$ for the Fricke-Macbeath curve.
\end{proof}
\subsection{Errors incurred by certifying \Cref{sec:dimcert}, and the proof of the main theorem}
Recalling the proof outline given in \Cref{sec:main}, we use the fact that by \Cref{L:C7L2} it suffices to prove the claim for one curve in each genus. In \Cref{lem:codimV} we have established an upper bound on the dimension $\dim V_{2,\ga}$. Under the assumption that this upper bound is an equality, we would have  $V_{2,\ga}\supset\ker\pi^2_{K_C}$, which shows that $\cap_\ga V_{2,\ga}\supset \ker\pi^2_{K_C}$. Thus this gives us a lower bound on the dimension of the intersection. We now complete the proof of the main theorem \Cref{thm:main}.
\begin{proof}[Proof of \Cref{thm:main}]
We first give a lower bound on $\dim V_{2,\ga}$, which turns out to be identical to the upper bound $d_\ga$ from \Cref{codimV}. Next, we give a lower bound on
$\dim \rmspan\cup_\ga V_{2,\ga}^\perp=\dim(\cap_\ga V_{2,\ga})^\perp$. Thus, we get an upper bound on the $\dim(\cap_\ga V_{2,\ga})$, which then coincides with $\dim\ker\pi^2_{K_C}$, which is the lower bound we have for the dimension of this intersection. By \Cref{prp:correct_steiner} our Steiner sets are correct, so we can compute the lower bound using the hyperplanes we certified. The accuracy of each such hyperplane is bounded by $10^{-9}$. Hence, the accuracy of each line of the matrix representing the Steiner set in $\Sym^2\Sym^2H^0(K_C)$, where each line is the $\Sym^2$ of the product of hyperplanes, is bounded by $4\cdot10^{-9}$ + $O(10^{-18})$. Ignoring terms
of magnitude $10^{-20}$, the matrices in question have at most $2^{g-2}(2^{g-1}-1)<2^{2g-3}$ lines, and so the Frobenius norm of the error in computing these matrices is bounded by $2^{2g-3}\cdot 4\cdot 10^{-9}$. To begin with, this gives us a bound on the error computed singular values, which are
\[\begin{aligned}
\ &\text{Fricke-Macbeath curve:}&
\max\sigma_1 = 6.02764,\quad
\min\sigma_{361}= 0.0379421,\quad
\max\sigma_{362} = 1.03606\cdot 10^{-13}\\
\ &\text{Wiman's sextic :}&
\max\sigma_1= 7.78234,\quad
\min\sigma_{181} = 0.00833475,\quad
\max\sigma_{182} = 6.46229\cdot10^{-14}
\end{aligned}\]
where the error incurred in this computation in these cases are
$6.7\cdot10^{-16},8.7\cdot10^{-16}$ respectively.

We have now proven the dimension claim for the $V_{2,\ga}$.
Next, by \Cref{prp:kercont}, the error in the bases of the kernels is bounded by
$\frac{|1.04\cdot10^{-13}-8.2\cdot10^{-6}|}{.04-8.2\cdot10^{-6}}<2.1\cdot10^{-4}$ for the Fricke-Macbeath curve, and $\frac{|6.5\cdot10^{-14}-2.1\cdot10^{-6}|}{.01-2.1\cdot10^{-6}}<2.2\cdot10^{-4}$ for Wiman's sextic.

For the Fricke-Macbeath curve, we intersect only $96$ many $V_{2,\ga}$s, whereas for Wiman's sextic, we intersect only $56$ of them. In both cases, this means the Frobenius norm of the error is bounded by $\sqrt{100}\cdot 2.2\cdot 10^{-4}< 2.5\cdot 10^{-3}$.
The singular values we compute for the corresponding matrices are
\[\begin{aligned}
\ &\text{Fricke-Macbeath curve:}&
\sigma_1 = 7.26053,\quad\sigma_{171}  = 1.17418,\quad\sigma_{172} = 1.86973\cdot10^{-12}\\
\ &\text{Wiman's sextic :}&
\sigma_1 = 9.21738,\quad\sigma_{120}  = 1.07752,\quad\sigma_{121} = 4.10144\cdot10^{-12}
\end{aligned}\]
where the error incurred in this computation in these cases are
$8.1\cdot10^{-16},1.1\cdot10^{-15}$ respectively.
\end{proof}
\subsection{Testing}\label{subsec:testing}
Although testing cannot prove that a program is correct, it is an important part of any programming endeavor, as it may provide a heuristic assurance for the correctness of the code.
First, it is important to note that the entire point of \Cref{sec:certificationThetaHyperplanes} and \Cref{sec:certificationSteiner} is to certify the computations which were done in a completely different context. This full separation between the way the input that was generated and our certification code is an important test in itself.

Next, there are two types of numerical tests we perform. The first of them is verifying known facts, like the essential dimensions appearing in the matrices $M', M'_T, M'_R$ being $g-2, g-1, g$, or the dimensions in \Cref{sec:dimcert} being smaller than the dimension bounds given in \Cref{sec:main} (on the other hand, an inequality of these numbers in the reverse direction would have just meant that our theorem is wrong). Another verification is via the fact that the point-hyperplane (approximate) incidence relation is invariant under the symmetry group action.
The second type of tests are incarnations of the rule of thumb ``numbers behave randomly unless they have a known reason not to". The incarnations of this rule in our code are:
\begin{enumerate}
    \item The spectral gaps on the essential dimensions of the spans of the rows or columns of $M', M'_T, M'_R$, or the matrices checking azygetic quadruples, or the matrices certifying the computations in \Cref{sec:certificationSteiner} are big. That is to say: if we denote this dimension by $d_{\text{ess}}$ for each of the cases then in these cases $\log_{10}\sigma_{d_{\text{ess}}+1}$ are bounded from above by $-10,-12,-11,-12$ respectively, whereas $\sigma_{d_\text{ess}}$ is bounded from below by $.3,.01,0.00833475,1.07752$ respectively, and $\sigma_1$ is bounded from above by $50,10,7.78234,9.21738$ respectively. This is in contrast to the case for the spectral gap for checking syzygetic quadruples, where $d_{\text{ess}} = \dim I_2(C) - 1$ and $\sigma_{d_{\text{ess}}+1}$ is bounded from above merely by $10^{-8}$, which is far bigger a bound than the corresponding bound for azygetic quadruples. The fact that both this bound and the bound on $\sqrt{\sum_i |H'_k\wedge T'_{ki}|^2}$ are relatively big reflects the fact that the distance of $\sum_jP'_{ij}$ from being half canonical is big, due to the way the $P'_{ij}$s are computed -- see the discussion above \Cref{thm:hh}.
    \item Points and hyperplanes are in practice far off from one another
      in the Fubini Study metrics: $10^{-5}$ away for hyperplanes, whereas for
      points the distance is at least
      $10^{-3}$ for Wiman's sextic and  $5\cdot10^{-4}$ for the Fricke-Macbeath
      curve.
\item\label{i:grad_decent_bound} One step of the iterative gradient descent (via inverse function theorem) algorithm alluded to in \Cref{rmr:grad_descent} \Cref{i:grad_descent} is performed, and it improves the solution by at least a factor of $0.85$.
\end{enumerate}

We close the article with a question that concerns the next genera from the perspective of explicit computations:

\begin{question}
To implement our certification procedure for genus 8 curves, one needs both a general curve and good numerical behavior—such as widely-spread theta hyperplanes in the ambient space and distinct tangency points. Are there explicit examples of Brill–Noether general, non-hyperelliptic genus 8 curves with sufficiently large automorphism group to support both construction and computation? See~\cite{Muk}, \cite{JP23}, and the Macaulay2 package~\cite{GraSch} for potential directions.    
\end{question}

\noindent {\bf Acknowledgements.} We thank the anonymous referee for constructive feedback, including valuable mathematical input that improved the content and presentation of this work.

\bibliographystyle{alpha}
\bibliography{main}
\appendix
%
\section{Computing theta hyperplanes numerically}\label{sec:computingThetaHyerplanes}
%
Here we recall the Riemann theta function and its connections to theta hyperplanes. Our primary reference for the background material is \cite{Mum}.
Let $C$ be a complex smooth algebraic curve. Let $\omega_1,\dots , \omega_g$ be a basis of $H^0(K_C)$, and let $\ga_1,\dots , \ga_g, \gb_1, \dots, \gb_g$ be a symplectic basis of $H_1(C,\ZZ$). The following $g\times 2g$ matrix is called the \emph{period matrix} of the curve (with the given choice of basis):
\begin{equation}\label{eq:periodMatrix}
(\tau_1|\tau_2):=\left(\left(\int_{\ga_i} \omega_j\right)\Big|\left(\int_{\gb_i} \omega_j\right)\right),
\end{equation}
whereas $\tau:=\tau_1^{-1}\tau_2$ is called a \emph{Riemann matrix} of the algebraic curve $C$. The Riemann matrix lies in Siegel upper half space $\HH_g$ -- the space of $g \times g$ symmetric matrices with positive definite imaginary part.
The theta function with characteristic $\varepsilon,\delta \in \{ 0,1\}^g$ is a
complex-valued function defined on $\CC^g\times \HH_g$:
 \[
\theta\begin{bmatrix} \varepsilon \\ \delta \end{bmatrix}({\bf z}\, |\, \tau)\,\,\, = \,\,\,
\sum_{{\bf n} \in \ZZ^g} {\rm exp} \left( \pi i \left({\bf n}+\frac{\varepsilon}{2}\right)^T \tau \left({\bf n}+\frac{\varepsilon}{2}\right) + \left({\bf n} + \frac{\varepsilon}{2}\right)^T  \left({\bf z} + \frac{\delta}{2}\right) \right).
\]
When $\varepsilon=\delta={\bf 0}$, this is nothing but the \emph{Riemann theta function}, which differs by an exponential factor from the latter. The characteristic is called even, odd if $\varepsilon\cdot \delta\equiv 0,1 \pmod 2$ respectively. So there are $2^{g-1}(2^g-1)$ odd and $2^{g-1}(2^g+1)$ even characteristics. For fixed $\tau$, the values $\theta\begin{bmatrix} \varepsilon \\ \delta \end{bmatrix}({\bf 0}\, |\, \tau)$ at ${\bf z}= {\bf 0}$ are known as \emph{theta constants}. We will also call a theta constant the evaluation of the first-order derivatives of the theta function at ${\bf 0}$:
\[
  \theta^{\varepsilon,\delta}_{i}:=\frac{\partial}{\partial z_i}\theta\begin{bmatrix} \varepsilon \\ \delta \end{bmatrix}({\bf z}\, |\, \tau)_{\big|_{{\bf z} ={\bf 0}}}.
\]
The following statement is classical, which follows from the Riemann singularity theorem. It formulates the theta characteristic divisors in terms of the canonical model (see \cite[Page 228]{ACGH} or \cite[Theorem 2.2]{Gua2002}).
\begin{theorem}
     Let $\tau_1$ be the first $g \times g$ part of the period matrix~\eqref{eq:periodMatrix}. Let $\theta$ be an effective theta characteristic divisor of degree $g-1$ with $\dim H^0(\theta)=1$. The equations of the theta hyperplanes $H_\theta$ arising from $\theta$ on the canonical model are given as follows:
\begin{equation}\label{eq:thetaChar}
    \begin{pmatrix}
    \theta^{\varepsilon,\delta}_1 & ,\dots , & \theta^{\varepsilon,\delta}_g
    \end{pmatrix} \cdot \tau_1^{-1} \cdot \begin{pmatrix}
    X_1 \\ \vdots \\ X_g
    \end{pmatrix}
   = 0,
    \end{equation} where the characteristic $\begin{bmatrix} \varepsilon \\ \delta  \end{bmatrix}$
    ranges over the odd ones.
\end{theorem}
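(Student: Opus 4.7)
The plan is to derive the formula from the Abel--Jacobi embedding of $C$, the Riemann singularity theorem, and Riemann's classical identification of the gradient of $\theta$ at a smooth point of $\Theta$ with the defining form of the corresponding theta hyperplane.

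First I would normalize the holomorphic basis: set $\omega'_j := \sum_k (\tau_1^{-1})_{kj}\omega_k$, so that $\int_{\alpha_i}\omega'_j=\delta_{ij}$ and $\int_{\beta_i}\omega'_j=\tau_{ij}$, whence $\Jac(C)=\CC^g/(\ZZ^g+\tau\ZZ^g)$ in the coordinates $v_j$ dual to $\omega'_j$. Let $u\colon C\to \Jac(C)$ be the Abel--Jacobi map based at a suitable point, so that the effective divisor $\theta=p_1+\cdots+p_{g-1}$ maps to a point $e_\theta$ at which $\theta[\varepsilon,\delta](\cdot\,|\,\tau)$ vanishes. By the Riemann singularity theorem, the multiplicity of the theta divisor at $e_\theta$ equals $\dim H^0(C,\theta)=1$, so $e_\theta$ is a smooth point of $\Theta$ and the covector $\nabla\theta[\varepsilon,\delta](e_\theta)=(\theta^{\varepsilon,\delta}_1,\ldots,\theta^{\varepsilon,\delta}_g)$ is non-zero; it defines the projectivized tangent hyperplane $T_{e_\theta}\Theta \subset T_{e_\theta}\Jac(C) = H^0(K_C)^*$ in the coordinates dual to $\omega'_j$.

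Next I would invoke Riemann's theorem on the gradient of $\theta$ (see~\cite[Chapter VI]{ACGH}): the holomorphic $1$-form on $C$ obtained by pulling back this covector along Abel--Jacobi, namely $\omega := \sum_j \theta^{\varepsilon,\delta}_j \omega'_j$, has divisor exactly $2\theta=2(p_1+\cdots+p_{g-1})$. Vanishing to order $1$ at each $p_j$ is immediate: the curves $u(C)+u(\theta)-u(p_j)\subset u(\Sym^{g-1}C)=\Theta$ all pass through $e_\theta$, so $\nabla\theta(e_\theta)$ annihilates each of their tangent directions $du(T_{p_j}C)$, which are exactly the canonical images $\kappa(p_j)$; hence the tangent hyperplane $T_{e_\theta}\Theta$ contains every $\kappa(p_j)$. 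Vanishing to order $2$ at each $p_j$ is the deeper content of Riemann's theorem, extracted from the second-order Taylor expansion of $\theta$ along the Abel--Jacobi image and the heat-equation-type bilinear relations satisfied by theta derivatives. Equivalently, the hyperplane $\{\sum_j \theta^{\varepsilon,\delta}_j v_j = 0\}\subset\PP H^0(K_C)^*$ cuts $C$ in the divisor $2\theta$, hence coincides with the theta hyperplane $H_\theta$.

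To recover the form of~\eqref{eq:thetaChar} in the original basis $\omega_j$, I would substitute $\omega'_j = \sum_k (\tau_1^{-1})_{kj}\omega_k$, obtaining $\omega = \sum_k\bigl(\sum_j \theta^{\varepsilon,\delta}_j (\tau_1^{-1})_{kj}\bigr)\omega_k$. In the canonical coordinates $X_k$ dual to $\omega_k$ the equation of $H_\theta$ becomes exactly $(\theta^{\varepsilon,\delta}_1,\ldots,\theta^{\varepsilon,\delta}_g)\cdot\tau_1^{-1}\cdot (X_1,\ldots,X_g)^T = 0$, up to the standard transposition convention for the period matrix. The main obstacle in this argument is the order-$2$ vanishing claim of Riemann's theorem; the remaining steps (Riemann singularity, change of basis, pullback via Abel--Jacobi) are essentially formal manipulations once the normalized set-up is in place.
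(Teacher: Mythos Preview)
The paper does not actually prove this theorem; it states it as classical, says it ``follows from the Riemann singularity theorem,'' and refers the reader to \cite[p.~228]{ACGH} and \cite[Theorem~2.2]{Gua2002}. Your sketch is precisely the standard derivation contained in those references: normalize the period matrix, use Riemann singularity to see that the odd two-torsion point is a smooth point of $\Theta$ so the gradient $(\theta^{\varepsilon,\delta}_1,\ldots,\theta^{\varepsilon,\delta}_g)$ is nonzero, invoke Riemann's identification of this gradient (pulled back along Abel--Jacobi) with a form vanishing doubly on $\theta$, and then undo the basis change to produce the $\tau_1^{-1}$ factor. So your approach is correct and is exactly what the paper is pointing to.

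One small point worth tightening: you should make explicit that for an odd characteristic the function $\theta[\varepsilon,\delta](\cdot\,|\,\tau)$ is odd, hence vanishes at $\mathbf{0}$, and that under Riemann's translation of $\Theta$ by the Riemann constant the point $e_\theta$ corresponding to the characteristic divisor is exactly $\mathbf{0}$; otherwise the identification $\nabla\theta[\varepsilon,\delta](e_\theta)=(\theta^{\varepsilon,\delta}_1,\ldots,\theta^{\varepsilon,\delta}_g)$ is not justified. This is implicit in your set-up but not stated.
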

%
\section{Some results in computational linear algebra}\label{sec:linear}
%
Unless otherwise specified, we denote the singular values of a matrix by
$\sigma_i$ which will be assumed to be in decreasing order. In what follows, we
use the operator norm and the Frobenius norm for the matrices, denoted by
$|\cdot|$ and $||\cdot||_{F}$ respectively. The same notation $|\cdot|$ will be
used also for the Euclidean norm, which will be distinguishable from the
objects.
\begin{theorem}\label{thm:sing_sum}
     Let $A,E$, be matrices of the same dimension, then $|\sigma_i(A+E)-\sigma_i(A)|\leq\sigma_1(E)=|E|$.
\end{theorem}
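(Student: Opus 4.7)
The statement is the classical Weyl (or Weyl--Mirsky) inequality for singular values, and the natural route is via the Courant--Fischer minimax characterization. Concretely, for any matrix $M$ of the given dimensions, I would use
\[
\sigma_i(M) \;=\; \max_{\substack{V\subseteq \CC^n\\\dim V = i}}\;\min_{\substack{x\in V\\|x|=1}} |Mx|,
\]
which expresses each singular value as a saddle value of the function $x\mapsto |Mx|$ on the unit sphere.

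The plan is then to feed $M=A+E$ into this formula and apply the triangle inequality pointwise: for every unit vector $x$,
\[
|(A+E)x| \;\le\; |Ax| + |Ex| \;\le\; |Ax| + |E|,
\]
where the last step uses the definition of the operator norm $|E|=\sigma_1(E)$. Taking the minimum over unit $x\in V$ and then the maximum over $i$-dimensional subspaces $V$ gives
\[
\sigma_i(A+E) \;\le\; \sigma_i(A) + |E|.
\]
Swapping the roles of $A$ and $A+E$ (writing $A=(A+E)+(-E)$ and noting $|-E|=|E|$) yields the reverse inequality $\sigma_i(A)\le \sigma_i(A+E)+|E|$, and combining the two gives the desired $|\sigma_i(A+E)-\sigma_i(A)|\le |E|$.

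There is no real obstacle here: the only content is the minimax identity plus the triangle inequality, both standard. The one thing to be mildly careful about is ensuring the minimax characterization is applied in the same convention (nonsquare matrices, singular values ordered decreasingly, padding with zeros if $\dim\ker M$ forces the minimum to be $0$ for subspaces of too high dimension), but none of this changes the estimate. The identity $\sigma_1(E)=|E|$ is simply the definition of the operator (spectral) norm in terms of the largest singular value.
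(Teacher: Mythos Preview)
Your proof is correct and is the standard argument via the Courant--Fischer minimax characterization. The paper itself does not supply a proof of this theorem: it is stated in the appendix as a classical fact (Weyl's perturbation inequality for singular values) and used as a black box throughout, so there is nothing to compare against beyond noting that your argument is the expected one.
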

\begin{proposition}[Pseudo kernels are continuous]\label{prp:kercont}
  Let $A, E$ be matrices of the same dimensions. Assume that the projection of
  a vector $v$ on the span of the first $k-1$ singular vectors of $A$ is trivial, then its projection on the first $k-1$ singular vectors of $A+E$ has norm at most  $\frac{\sigma_k(A) + |E|}{\sigma_{k-1}(A)-|E|}|v|$.
\end{proposition}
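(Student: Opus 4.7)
The plan is to use two standard facts together: the orthogonality of the images of distinct right singular vectors under $A+E$, and Weyl's bound \Cref{thm:sing_sum} applied to $\sigma_{k-1}$.

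First, I would exploit the hypothesis on $v$. Let $A = U\Sigma V^{*}$ be a singular value decomposition, and let $v_1, v_2, \ldots$ denote the right singular vectors of $A$. The hypothesis says that $v$ lies in $\mathrm{span}\{v_k, v_{k+1}, \ldots\}$, so expanding in this basis and using that $\|Av_i\|=\sigma_i(A)$ with $\sigma_i(A)\le\sigma_k(A)$ for $i\ge k$, we get $\|Av\| \le \sigma_k(A)\,|v|$. By the triangle inequality,
\[
\|(A+E)v\| \;\le\; \|Av\| + \|Ev\| \;\le\; \bigl(\sigma_k(A) + |E|\bigr)\,|v|.
\]

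Next, let $w$ denote the projection of $v$ onto the span of the first $k-1$ right singular vectors of $A+E$, so that $v - w$ is in the orthogonal complement, i.e.\ in the span of the remaining right singular vectors of $A+E$. The key observation is that $(A+E)w$ and $(A+E)(v-w)$ are orthogonal: indeed, if $v^{+}_1, v^{+}_2, \ldots$ are the right singular vectors of $A+E$ with corresponding left singular vectors $u^{+}_1, u^{+}_2,\ldots$, then $(A+E)v^{+}_i = \sigma_i(A+E)\,u^{+}_i$ and distinct $u^{+}_i$ are orthonormal. Pythagoras therefore gives
\[
\|(A+E)w\|^{2} \;\le\; \|(A+E)w\|^{2} + \|(A+E)(v-w)\|^{2} \;=\; \|(A+E)v\|^{2}.
\]
On the other hand, since $w$ lies in the span of the first $k-1$ right singular vectors of $A+E$,
\[
\|(A+E)w\| \;\ge\; \sigma_{k-1}(A+E)\,|w|.
\]

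Finally, applying \Cref{thm:sing_sum} to bound $\sigma_{k-1}(A+E)\ge \sigma_{k-1}(A)-|E|$ and combining with the two previous displays yields
\[
|w| \;\le\; \frac{\|(A+E)v\|}{\sigma_{k-1}(A+E)} \;\le\; \frac{\sigma_k(A)+|E|}{\sigma_{k-1}(A)-|E|}\,|v|,
\]
which is exactly the claimed bound. There is no real obstacle here, but the one spot requiring care is being explicit about which family of singular vectors (the right ones) is being projected onto, since the orthogonality in the Pythagoras step relies on the fact that images of right singular vectors of $A+E$ under $A+E$ are orthogonal via the left singular vectors.
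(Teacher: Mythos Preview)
Your proof is correct and follows the same approach as the paper's: bound $\|(A+E)v\|$ above by $(\sigma_k(A)+|E|)|v|$ via the hypothesis and the triangle inequality, then bound it below on the projected component using $\sigma_{k-1}(A+E)\ge\sigma_{k-1}(A)-|E|$ from \Cref{thm:sing_sum}. The paper compresses exactly this argument into two sentences; you have simply made explicit the Pythagoras step linking $\|(A+E)v\|$ to $\|(A+E)w\|$, and you are right to flag that the relevant singular vectors are the right ones.
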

\begin{proof}
  We have $|(A+E)v| \leq(\sigma_k(A) + |E|)|v|$. Since for $i < k $ we have
  $\sigma_i(A+E)\geq\sigma_i(A)-|E|\geq\sigma_{k-1}(A)-|E|$, the statement follows.
\end{proof}
\begin{proposition}[Solutions of linear equations are continuous]\label{prp:solcont}
Suppose that $A$ is an invertible matrix and $Ax=y$, and $(A+E)x'=y+\delta_y$, where the smallest singular value of $A$ satisfies $\sigma_r(A)>a$. Then $|x'-x|<(|E|/a+|\delta_y|)/(a-|E|)$.
\end{proposition}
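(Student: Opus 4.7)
My plan is to subtract the two defining linear systems and isolate $x'-x$ on one side. Writing $(A+E)x' = y + \delta_y$ and $Ax = y$, I would compute
\[
(A+E)(x'-x) = (A+E)x' - (A+E)x = y + \delta_y - Ax - Ex = \delta_y - Ex,
\]
so that $x'-x = (A+E)^{-1}(\delta_y - Ex)$. The whole problem then reduces to bounding the operator norm of $(A+E)^{-1}$ from above and bounding $|\delta_y - Ex|$ from above using a bound on $|x|$.

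For the operator norm of $(A+E)^{-1}$ I would invoke the previous \Cref{thm:sing_sum} to get $\sigma_r(A+E) \geq \sigma_r(A) - |E| > a - |E|$, so that $|(A+E)^{-1}| \leq 1/(a-|E|)$, which requires the (implicit) assumption $|E|<a$. For the numerator I would use the triangle inequality and sub-multiplicativity of the operator norm: $|\delta_y - Ex| \leq |\delta_y| + |E|\cdot|x|$. The remaining ingredient is a bound $|x| \leq 1/a$, which follows from $x = A^{-1}y$ together with $|A^{-1}|\leq 1/a$ under the implicit normalization $|y| \leq 1$ that the statement assumes (the target bound would otherwise carry a factor of $|y|$).

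Combining the two estimates yields
\[
|x'-x| \leq \frac{|\delta_y| + |E|\cdot|x|}{a-|E|} \leq \frac{|\delta_y| + |E|/a}{a-|E|},
\]
which is the claimed inequality. The step that requires the most care is the book-keeping of the normalization on the right-hand side $y$ so that $|x|\leq 1/a$; without it the bound needs an extra factor of $|y|$. No step here is genuinely hard — this is a textbook continuity-of-solutions argument — and the main function of stating the result explicitly is to use it as a black box in \Cref{prp:rcont} and \Cref{prp:all_steiner}.
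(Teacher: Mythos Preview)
Your proof is correct and essentially the same as the paper's: both arrive at $x'-x=(A+E)^{-1}(\delta_y-Ex)$ and then bound via $|(A+E)^{-1}|\leq 1/(a-|E|)$ and $|x|=|A^{-1}y|\leq |y|/a$. The paper reaches the same identity via the resolvent formula $A^{-1}-B^{-1}=B^{-1}(B-A)A^{-1}$ rather than by direct subtraction, and (as you correctly anticipated) its final displayed bound carries an explicit factor of $|y|$, so your remark about the implicit normalization $|y|\leq 1$ in the stated inequality is exactly right.
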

\begin{proof} Setting $B=A+E$, we have
$A^{-1}-B^{-1}=B^{-1}BA^{-1}-B^{-1}AA^{-1}=B^{-1}(B-A)A^{-1}$. Whence
\[
A^{-1}y -(A+E)^{-1}(y+\gd_y) = (A+E)^{-1}EA^{-1}y - (A+E)^{-1}\delta_y.
\]
This implies that
$|A^{-1}y -(A+E)^{-1}(y+\gd_y)|\leq \frac{|E|\cdot|y|}{a(a-|E|)}+\frac{|\gd_y|}{a-|E|}.$
\end{proof}
\begin{remark}[Using LAPACK as a certification accuracy oracle]\label{rmr:lapack_oracle}
For our computations, we use the linear algebra library \verb|LAPACK|.
Denote by \verb|E| the machine numerical epsilon. We trust \verb|LAPACK| accuracy bounds: \verb|SERRBD|$(A)$, \verb|VERRBD|$(A,{\verb|i|})$, \verb|UERRBD|$(A,{\verb|i|})$, which denote the errors in singular values of the matrix $A$, and in the angles of singular vectors for the matrix $A$ respectively. The values and the vectors are computed by the function \verb|zgesvd|.
The values of \verb|VERRBD| and \verb|UERRBD| are computed by the function \verb|ddisna|, whereas \verb|SERRBD| is the top computed singular value times \verb|E|. We note that if several consecutive singular values are close then the values given by \verb|ddisna| are meaningless.
Finally, we also trust the least square solution function \verb|zgelss| in the sense that the error is the reported error in the over-defined case (as norm of
the last entries of the returned values). That being mentioned, the reported error is the square root of a sum of numbers, so we have to account for the error in computing this sum as well.
\end{remark}
We continue with bounding the singular values of machine-computed matrices.
\begin{proposition}\label{prp:computed_sing}
  Let $A$ be an $n\times m$ matrix. If $A'$ is the computed representation
  of $A$ then
\[|\sigma'_i(A')-\sigma_i(A)|\leq \sigma'_1(A')\cdot \,{\verb|E|} + \sqrt{\sum_j \gep_{\mathrm{row}_j}^2} \cdot(1+ (2n + 1){\verb|E|}),\]
where $\gep_{\mathrm{row}_j}$ is a bound on the $L_2$-norm of the $j$th row of $A'-A$.
\end{proposition}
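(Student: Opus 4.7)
The plan is to combine three ingredients: the LAPACK accuracy oracle for computed singular values (from \Cref{rmr:lapack_oracle}), the Weyl-type stability bound in \Cref{thm:sing_sum}, and a Frobenius bound on the operator norm of $A'-A$. Concretely, I would first write
\[
|\sigma'_i(A')-\sigma_i(A)| \;\leq\; |\sigma'_i(A')-\sigma_i(A')| \;+\; |\sigma_i(A')-\sigma_i(A)|
\]
via the triangle inequality, and handle the two summands separately.

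For the first summand, \verb|SERRBD| from \Cref{rmr:lapack_oracle} gives the bound $|\sigma'_i(A')-\sigma_i(A')|\leq \sigma'_1(A')\cdot\verb|E|$, which is precisely the first term of the claimed inequality. For the second summand, I would apply \Cref{thm:sing_sum} with $E=A'-A$ to get $|\sigma_i(A')-\sigma_i(A)|\leq |A'-A|$, and then dominate the operator norm by the Frobenius norm:
\[
|A'-A|\;\leq\;\|A'-A\|_F\;=\;\sqrt{\sum_{j=1}^n \bigl\| (A'-A)_{\mathrm{row}_j}\bigr\|_2^{\,2}}\;\leq\;\sqrt{\sum_j \gep_{\mathrm{row}_j}^2}.
\]

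The only delicate step, and the one I expect to be the main bookkeeping obstacle, is the multiplicative factor $(1+(2n+1)\verb|E|)$. This factor accounts for the fact that in practice we do not know $\sqrt{\sum_j\gep_{\mathrm{row}_j}^2}$ exactly but compute it in floating point: we form $n$ squares, perform $n-1$ additions, and take one final square root, each incurring a rounding relative error bounded by \verb|E|. Standard error analysis (accumulating multiplicative $(1+\verb|E|)$ factors to first order) then yields that the computed value of $\sqrt{\sum_j\gep_{\mathrm{row}_j}^2}$ dominates the true Frobenius bound once multiplied by $(1+(2n+1)\verb|E|)$. Combining this with the bound from \Cref{thm:sing_sum} and the \verb|SERRBD| term gives the stated inequality. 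The argument is essentially a concatenation of standard facts, with the only care needed being consistency in whether $\sigma'_1(A')$ and $\gep_{\mathrm{row}_j}$ denote computed or exact quantities; I would use the LAPACK-returned value for $\sigma'_1(A')$ throughout, which is already covered by the first term.
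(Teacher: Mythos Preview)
Your proposal is correct and follows essentially the same approach as the paper: triangle inequality to split into the LAPACK oracle term and the perturbation term, then \Cref{thm:sing_sum} plus the Frobenius bound for the latter. Your discussion of the $(1+(2n+1)\texttt{E})$ factor as the floating-point error in computing the Frobenius expression is in fact more detailed than what the paper provides, which simply asserts the bound without elaboration.
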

\begin{proof}
Note that $|\sigma'_i(A')-\sigma_i(A)|\leq |\sigma'_i(A')-\sigma_i(A')|+|\sigma_i(A')-\sigma_i(A)|$.
Following the \verb|LAPACK| documentation, the first summand is bounded by $\sigma'_1(A')\,\cdot$ {\verb|E|}. By the definition of the Frobenius form and \Cref{thm:sing_sum} the second summand is bounded by the second summand in proposition statement.
\end{proof}
\begin{definition}\label{dfn:abovebelow}
    We denote the bound on the singular values of $A$ above and below by $\sigma_i(A)^+$ and $\sigma_i(A)^-$ respectively.
\end{definition}
\begin{remark}[Ignoring the error in estimating the error]
We will compute our error estimates using a computer. The acute reader will notice that this computation typically involves adding and sometimes multiplying positive real numbers. Thus this computation also introduces errors. However, since we are talking here about positive real numbers the error introduced in the computation will have a size of the error time $1+O({\verb|E|})$. This extra error is meaningless and will be ignored in the presentation.
\end{remark}

\end{document}